\newtheorem{theorem}{Theorem}[section]
\newtheorem{lemma}[theorem]{Lemma}
\newtheorem{corollary}[theorem]{Corollary}
\newtheorem{proposition}[theorem]{Proposition}
\theoremstyle{definition}
\newtheorem{remark}[theorem]{Remark}
\theoremstyle{remark}
\newcommand{\N}{{\mathbb N}}
\newcommand{\R}{{\mathbb R}}
\newcommand{\Z}{{\mathbb Z}}
\begin{document}

\title{A note on the boundedness\\
of discrete commutators on Morrey spaces and their preduals
\footnotetext{2010 Mathematics Subject Classification: Primary 26B33, 41E17, Secondary 42B25, 42B35}
\footnotetext{Key words and phrases: 
 Riesz potentials, wavelets}}

\author{Yoshihiro Sawano}

\maketitle

\begin{abstract}
Dyadic fractional integral operators
are shown to  be bounded
on Morrey spaces and their preduals.
It seems that the proof of the boundedness
by means of dyadic fractional integral operators
is effective particularly on the preduals.
In the present paper
the commutators are proved to be bounded as well.
\end{abstract}

\section{Introduction}

In the present paper,
we consider the dyadic analysis of Morrey spaces and their preduals.
The Haar wavelet, which plays a central role in this field, 
is given as follows:
First, we write
\begin{equation}
h^{\varepsilon_i}(t)
:=\chi_{[0,1)}(2t)
+(-1)^{\varepsilon_i}\chi_{[1,2)}(2t) \quad (t \in {\mathbb R})
\end{equation}
for $\varepsilon_i \in {\mathbb Z}/2{\mathbb Z}$.
Given $\varepsilon \in E:=({\mathbb Z}/2{\mathbb Z})^n 
\setminus \{(0,0,\ldots,0)\}$,
we define
\begin{equation}
h^\varepsilon
:=h^{\varepsilon_1}\otimes 
h^{\varepsilon_2}\otimes \ldots \otimes h^{\varepsilon_n},
\mbox{ that is, }
h^\varepsilon(x_1,x_2,\ldots,x_n)
=\prod_{i=1}^n
h^{\varepsilon_i}(x_i).
\end{equation}
By ${\mathcal D}$ we mean the set of all dyadic cubes.
If we write
$\displaystyle Q_{j m}:=
\prod_{\nu=1}^n \left[\frac{m_\nu}{2^j},\frac{m_{\nu}+1}{2^j}\right)$
for $j \in {\mathbb Z}$ and $m \in {\mathbb Z}^n$,
then we have 
${\mathcal D}=\{Q_{jm}\,:\,j \in {\mathbb Z}, \, m \in {\mathbb Z}^n\}$.
The set ${\mathcal D}_j$ is the subset of ${\mathcal D}$
made up of the cubes of volume $2^{-jn}$:
${\mathcal D}_j=\{Q_{jm}\,:\, m \in {\mathbb Z}^n\}$.
Given a dyadic cube 
$Q=Q_{jm} \quad (j \in {\mathbb Z}, \, m \in {\mathbb Z}^n)$,
we define the corresponding Haar function by
\begin{equation}
h^\varepsilon_Q(x):=2^{jn/2}h^{\varepsilon}(2^j x-m).
\end{equation}
The idea of discretizing $I_\alpha$ dates back to Lacey's 2007 paper
\cite{Lacey-2005-Hokkaido}.

Now we will describe Morrey spaces,
the function spaces considered in the present paper.
Let $1 \le q \le p<\infty$.
Then let us define the Morrey norm $\|f\|_{{\mathcal M}^p_q}$ by
\begin{equation}\label{eq:Morrey}
\|f\|_{{\mathcal M}^p_q}
:=
\sup_{Q \in {\mathcal D}}|Q|^{\frac1p-\frac1q}
\left(\int_Q|f(y)|^q\,dy\right)^{\frac1q},
\end{equation}
where $f \in L^{q,\rm loc}$.
We will also use the dyadic BMO space.
Given a cube $Q \in {\mathcal D}$ and $f \in L^{1,{\rm loc}}$,
we can write $\displaystyle m_Q(f):=\frac{1}{|Q|}\int_Q f(x)\,dx$.
The dyadic sharp maximal operator here is defined by
\begin{equation}
M^{\sharp,{\rm dyadic}}f(x)
:=
\sup_{x \in {\mathcal D}}
m_Q(|f-m_Q(f)|).
\end{equation}
A function $a \in L^{1,{\rm loc}}$ 
is said to belong to the dyadic BMO,
which we will write as ${\rm BMO}_{{\rm dyadic}}$,
if $M^{\sharp,{\rm dyadic}}a \in L^\infty$.
We define the dyadic BMO norm
by $\|a\|_{{\rm BMO}_{{\rm dyadic}}}:=\|M^{\sharp,{\rm dyadic}}a\|_\infty$.

The present paper,
based upon Theorem \ref{thm1},
considers the boundedness of commutators.
Throughout the paper, for $A,B>0$,
we write $A \lesssim B$ to indicate
that there exists a constant $c>1$
such that $A \le c\,B$
and that this constant depends only on $p,q,s,t,\alpha$
which will appear in each theorem.
We also use $A \gtrsim B$ to denote $B \lesssim A$
and $A \sim B$ to denote the two-sided inequality
$A \lesssim B \lesssim A$.
\begin{theorem}
\label{thm1}
Let $1<q \le p<\infty$.
\begin{enumerate}
\item[$(i)$]
Let $f \in {\mathcal M}^p_q$.
Then we have equivalence
\begin{equation}
\label{eq:thm1-1}
\|f\|_{{\mathcal M}^p_q}
\sim
\sum_{\varepsilon \in E}
\left\|\left(\sum_{j=-\infty}^\infty\left|\sum_{Q \in {\mathcal D_j}}
\langle f,h_Q^\varepsilon \rangle h_Q^{\varepsilon}\right|^2
\right)^{\frac12}\right\|_{{\mathcal M}^p_q}.
\end{equation}
\item[$(ii)$]
If a locally integrable function $f$
satisfies
\begin{equation}
\label{eq:thm1-2}
\sum_{\varepsilon \in E}
\left\|\left(\sum_{j=-\infty}^\infty\left|\sum_{Q \in {\mathcal D_j}}
\langle f,h_Q^\varepsilon \rangle h_Q^{\varepsilon}\right|^2
\right)^{\frac12}\right\|_{{\mathcal M}^p_q}<\infty,
\end{equation}
then the limit
\begin{equation}
\label{eq:thm1-3}
g:=\lim_{M \to \infty}
\sum_{\varepsilon \in E}
\sum_{j=-M}^M
\sum_{Q \in {\mathcal D}_j}
\langle f,h^{\varepsilon}_Q \rangle h^{\varepsilon}_Q
\end{equation}
exists in the topology of $L^{q,{\rm loc}}$
and defines an ${\mathcal M}^p_q$-function.
Furthermore,
\[
\|g\|_{{\mathcal M}^p_q}
\sim
\sum_{\varepsilon \in E}
\left\|\left(\sum_{j=-\infty}^\infty\left|\sum_{Q \in {\mathcal D_j}}
\langle f,h_Q^\varepsilon \rangle h_Q^{\varepsilon}\right|^2
\right)^{\frac12}\right\|_{{\mathcal M}^p_q}.
\]
\end{enumerate}
\end{theorem}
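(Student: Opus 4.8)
The plan is to reduce the whole statement to the classical Littlewood--Paley inequality for the dyadic (Haar) square function, $\|\phi\|_{L^q(\R^n)}\sim\|S\phi\|_{L^q(\R^n)}$ for $\phi\in L^q(\R^n)$ and $1<q<\infty$, where $S\phi:=\bigl(\sum_{\varepsilon\in E}\sum_{Q\in\mathcal D}|\langle\phi,h^\varepsilon_Q\rangle|^2|h^\varepsilon_Q|^2\bigr)^{1/2}$, and to absorb the genuinely new ``low-frequency'' terms through the Morrey scaling. Write $b^\varepsilon_j:=\bigl|\sum_{Q\in\mathcal D_j}\langle f,h^\varepsilon_Q\rangle h^\varepsilon_Q\bigr|$ and $S_\varepsilon f:=\bigl(\sum_{j\in\Z}(b^\varepsilon_j)^2\bigr)^{1/2}$, so that the right-hand side of \eqref{eq:thm1-1} equals $\sum_{\varepsilon\in E}\|S_\varepsilon f\|_{\cMpq}$. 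Because the Haar functions of a fixed generation have pairwise disjoint supports, $b^\varepsilon_j$ is constant on each cube of $\mathcal D_j$, one has $b^\varepsilon_j\le S_\varepsilon f$ pointwise, and on $R\in\mathcal D_j$ the Morrey norm yields $b^\varepsilon_j|_R\le|R|^{-1/q}\|S_\varepsilon f\|_{L^q(R)}\le|R|^{-1/p}\|S_\varepsilon f\|_{\cMpq}=2^{jn/p}\|S_\varepsilon f\|_{\cMpq}$. The key point is the geometric summability $\sum_{j<j_0}2^{jn/p}\lesssim 2^{j_0 n/p}$ of the coarse scales (this is where $p<\infty$ is used); the analogous statement fails on $L^q$, which is why constants, say, must be excluded there but are harmless here.

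For part $(i)$, note first that every $f\in\cMpq$ coincides with its Haar expansion in $L^{q,\mathrm{loc}}$: the martingale $\mathbb E[f\mid\mathcal D_j]$, equal at $x$ to $m_{Q(x,j)}(f)$ with $Q(x,j)\in\mathcal D_j$ the cube containing $x$, converges to $f$ in $L^{q,\mathrm{loc}}$ as $j\to\infty$, while $|m_{Q(x,j)}(f)|\le|Q(x,j)|^{-1/p}\|f\|_{\cMpq}\to 0$ as $j\to-\infty$. Fix $Q_0\in\mathcal D_{j_0}$ and split $f$ on $Q_0$ as $f=m_{Q_0}(f)+f_{Q_0}$, where $f_{Q_0}:=\sum_{\varepsilon\in E}\sum_{Q\subseteq Q_0}\langle f,h^\varepsilon_Q\rangle h^\varepsilon_Q\in L^q(\R^n)$ is supported in $Q_0$ and has Haar coefficients $\langle f,h^\varepsilon_Q\rangle$ for $Q\subseteq Q_0$ and $0$ otherwise. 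For ``$\lesssim$'' in \eqref{eq:thm1-1}: applying the $L^q$ inequality to $f_{Q_0}$ and noting that $Sf_{Q_0}$ is supported in $Q_0$ and bounded there by $\bigl(\sum_\varepsilon(S_\varepsilon f)^2\bigr)^{1/2}$ gives $\|f_{Q_0}\|_{L^q(Q_0)}\lesssim\sum_\varepsilon\|S_\varepsilon f\|_{L^q(Q_0)}$, while the telescoping identity $m_{Q_0}(f)=\sum_{j<j_0}\sum_\varepsilon\langle f,h^\varepsilon_{Q(x,j)}\rangle h^\varepsilon_{Q(x,j)}(x)$ (valid for $x\in Q_0$ since the low-frequency limit vanishes) combined with $b^\varepsilon_j|_{Q_0}\le 2^{jn/p}\|S_\varepsilon f\|_{\cMpq}$ and geometric summation gives $|m_{Q_0}(f)|\lesssim|Q_0|^{-1/p}\sum_\varepsilon\|S_\varepsilon f\|_{\cMpq}$; multiplying both estimates by $|Q_0|^{1/p-1/q}$, adding, and taking $\sup_{Q_0\in\mathcal D}$ proves the inequality. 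For ``$\gtrsim$'', split each $S_\varepsilon f$ on $Q_0$ into the fine part ($j\ge j_0$), which coincides there with $S_\varepsilon f_{Q_0}$ and so is controlled by the $L^q$ inequality via $\|f_{Q_0}\|_{L^q(Q_0)}\lesssim\|f\|_{L^q(Q_0)}$, and the coarse part ($j<j_0$), which is constant on $Q_0$ and, by H\"older's inequality $|\langle f,h^\varepsilon_R\rangle|\le\|h^\varepsilon_R\|_{L^{q'}}\|f\|_{L^q(R)}$ on the cube $R\in\mathcal D_j$ containing $Q_0$ followed by the Morrey bound, is at most $C|Q_0|^{-1/p}\|f\|_{\cMpq}$; again multiply by $|Q_0|^{1/p-1/q}$ and take the supremum.

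For part $(ii)$, put $g_M:=\sum_{\varepsilon\in E}\sum_{j=-M}^{M}\sum_{Q\in\mathcal D_j}\langle f,h^\varepsilon_Q\rangle h^\varepsilon_Q$ and show that $\{g_M\}_M$ is Cauchy in $L^q(Q_0)$ for every $Q_0\in\mathcal D_{j_0}$. For $M_2>M_1\ge|j_0|$, the restriction of $g_{M_2}-g_{M_1}$ to $Q_0$ decomposes into a fine tail (scales $M_1<j\le M_2$), estimated by the $L^q$ inequality by $\lesssim\bigl\|\bigl(\sum_\varepsilon\sum_{j>M_1}(b^\varepsilon_j)^2\bigr)^{1/2}\bigr\|_{L^q(Q_0)}\to 0$ (dominated convergence, the square function being in $L^q(Q_0)$), and a coarse tail (scales $-M_2\le j<-M_1$), constant on $Q_0$ of size $\le\sum_\varepsilon\sum_{j\le -M_1}b^\varepsilon_j|_{Q_0}\lesssim 2^{-M_1 n/p}\sum_\varepsilon\|S_\varepsilon f\|_{\cMpq}\to 0$. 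Hence $g:=\lim_M g_M$ exists in $L^{q,\mathrm{loc}}$; testing against each $h^\varepsilon_Q$ gives $\langle g,h^\varepsilon_Q\rangle=\langle f,h^\varepsilon_Q\rangle$, so $S_\varepsilon g=S_\varepsilon f$, and since $g$ equals its own Haar expansion with vanishing low-frequency part, the argument of ``$\lesssim$'' above applies with $g$ in place of $f$ to give $\|g\|_{\cMpq}\lesssim\sum_\varepsilon\|S_\varepsilon f\|_{\cMpq}<\infty$; thus $g\in\cMpq$. Part $(i)$ applied to $g$ then yields $\|g\|_{\cMpq}\sim\sum_\varepsilon\|S_\varepsilon g\|_{\cMpq}=\sum_\varepsilon\|S_\varepsilon f\|_{\cMpq}$, the asserted equivalence.

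The main obstacle is the low-frequency part. On $L^q$ the average $m_{Q_0}(f)$ is not controlled at all by the square function; on $\cMpq$ it becomes controllable, but only through the convergent geometric series $\sum_{j<j_0}2^{jn/p}$, and making this interact correctly with the weight $|Q_0|^{1/p-1/q}$ in the Morrey norm is the crux of $(i)$. In $(ii)$ the matching difficulty is to verify that the coarse tails of the partial sums $g_M$ do converge in $L^{q,\mathrm{loc}}$ and that the reconstructed $g$ genuinely lies in $\cMpq$, so that $(i)$ may be invoked; beyond that, the proof is a routine assembly of Haar orthonormality, the martingale-difference decomposition, and the scalar $L^q$ square-function inequality.
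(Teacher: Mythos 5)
Your proof is correct and reaches the result by essentially the same combination of ingredients the paper uses: the $L^q$ dyadic square-function equivalence (Proposition \ref{prop:2.1}), the pointwise bound $b^\varepsilon_j|_R\le|R|^{-1/p}\|S_\varepsilon f\|_{\cMpq}$ for $R\in\mathcal D_j$, and the geometric summability $\sum_{j<j_0}2^{jn/p}\sim 2^{j_0n/p}$ (valid because $p<\infty$) to absorb the low frequencies. The one structural difference is bookkeeping: the paper first proves the auxiliary Lemma \ref{lem:1}, namely $\|f\|_{\cMpq}\sim\sum_\varepsilon\|S_\varepsilon f\|_{\cMpq}+\|f\|_{{\mathcal M}^p_1}$, valid down to $q=1$, and then disposes of the extra term $\|f\|_{{\mathcal M}^p_1}$ by telescoping $|R|^{1/p}m_R(|f|)=\lim_k|R|^{1/p-1}\int_R|f-m_{R_{+k}}(f)|$ out through the ancestors $R_{+k}$; you instead decompose $f|_{Q_0}=m_{Q_0}(f)+f_{Q_0}$ and telescope $m_{Q_0}(f)=\sum_{j<j_0}\sum_\varepsilon\langle f,h^\varepsilon_{Q(x,j)}\rangle h^\varepsilon_{Q(x,j)}(x)$ directly, bypassing the intermediate lemma. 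These two telescoping identities are equivalent, so the approaches are the same in substance; your version is a little more streamlined for the purpose of Theorem \ref{thm1} alone, while the paper's Lemma \ref{lem:1} has the small advantage of isolating a statement that already holds for $q=1$. Your treatment of part $(ii)$ (Cauchy in $L^q(Q_0)$ via a fine tail controlled by the $L^q$ square function plus dominated convergence, and a coarse tail controlled by $2^{-M_1n/p}$) also matches the paper's argument.
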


The following paraproduct plays an important role
in the proof of the boundedness of commutators.
The next result follows.
\begin{theorem}
\label{thm2}
Let $a \in {\rm BMO}_{{\rm dyadic}}$ and $1<q \le p<\infty$.
Then we have
\begin{align*}
\sum_{\varepsilon \in E}
\left\|
\sum_{j=-\infty}^\infty
\left(
\sum_{Q \in {\mathcal D}_j}
\langle f,\chi_Q \rangle 
\cdot
\langle a,h^{\varepsilon}_Q \rangle
h^{\varepsilon}_Q
\right)
\right\|_{{\mathcal M}^p_q}
\lesssim
\|a\|_{{\rm BMO}_{{\rm dyadic}}}
\|f\|_{{\mathcal M}^p_q}.
\end{align*}
\end{theorem}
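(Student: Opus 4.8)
The plan is to reduce, via Theorem~\ref{thm1}, the asserted inequality to an $L^q$ estimate for a \emph{localized} dyadic paraproduct. Denote by $\pi_af$ the function given by the (a priori formal) double sum on the left-hand side. Since $\{h^\varepsilon_Q:Q\in{\mathcal D},\ \varepsilon\in E\}$ is orthonormal, the Haar coefficients of $\pi_af$ are $\langle\pi_af,h^\varepsilon_Q\rangle=\langle f,\chi_Q\rangle\langle a,h^\varepsilon_Q\rangle$, so Theorem~\ref{thm1}$(ii)$, applied to the family $\{\langle f,\chi_Q\rangle\langle a,h^\varepsilon_Q\rangle\}$ in place of $\{\langle f,h^\varepsilon_Q\rangle\}$ (first for finite Haar sums $f$ and then by a routine limiting argument), shows both that the defining sum converges in $L^{q,{\rm loc}}$ and that it suffices to prove
\[
\sum_{\varepsilon\in E}\left\|\left(\sum_{j=-\infty}^\infty\Big|\sum_{Q\in{\mathcal D}_j}\langle f,\chi_Q\rangle\langle a,h^\varepsilon_Q\rangle h^\varepsilon_Q\Big|^2\right)^{\frac12}\right\|_{{\mathcal M}^p_q}\lesssim\|a\|_{{\rm BMO}_{{\rm dyadic}}}\|f\|_{{\mathcal M}^p_q}.
\]
For fixed $\varepsilon$ the cubes in ${\mathcal D}_j$ are pairwise disjoint and $|h^\varepsilon_Q|^2=|Q|^{-1}\chi_Q$, so the innermost sum collapses and the square function above equals $(G_\varepsilon)^{1/2}$, where $G_\varepsilon:=\sum_{Q\in{\mathcal D}}|\langle f,\chi_Q\rangle|^2\,|\langle a,h^\varepsilon_Q\rangle|^2\,|Q|^{-1}\chi_Q$.

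Next I fix a dyadic cube $R$ and estimate $|R|^{\frac1p-\frac1q}\|(G_\varepsilon)^{1/2}\|_{L^q(R)}$. On $R$ only cubes $Q$ meeting $R$ contribute, and by dyadic nesting each such $Q$ satisfies $Q\subseteq R$ or $Q\supsetneq R$; write $G_\varepsilon=G_\varepsilon^{\rm near}+G_\varepsilon^{\rm far}$ accordingly. For the far part there is, for each $k\ge1$, exactly one $Q\supsetneq R$ with $|Q|=2^{kn}|R|$, and $\chi_Q\equiv1$ on $R$; using $|\langle a,h^\varepsilon_Q\rangle|\le|Q|^{-1/2}\int_Q|a-m_Q(a)|\le|Q|^{1/2}\|a\|_{{\rm BMO}_{{\rm dyadic}}}$ (recall $\|h^\varepsilon_Q\|_\infty=|Q|^{-1/2}$ and the definition of the dyadic BMO norm) together with $|\langle f,\chi_Q\rangle|\lesssim|Q|^{-1/p}\|f\|_{{\mathcal M}^p_q}$ (Hölder's inequality and~(\ref{eq:Morrey})), the convergent geometric series $\sum_{k\ge1}(2^{kn}|R|)^{-2/p}\sim|R|^{-2/p}$ gives $G_\varepsilon^{\rm far}\lesssim\|a\|_{{\rm BMO}_{{\rm dyadic}}}^2\|f\|_{{\mathcal M}^p_q}^2|R|^{-2/p}$ on $R$, whence $|R|^{\frac1p-\frac1q}\|(G_\varepsilon^{\rm far})^{1/2}\|_{L^q(R)}\lesssim\|a\|_{{\rm BMO}_{{\rm dyadic}}}\|f\|_{{\mathcal M}^p_q}$.

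For the near part I observe that $\langle f,\chi_Q\rangle$ with $Q\subseteq R$ depends only on $f$ restricted to $R$, so $G_\varepsilon^{\rm near}$ is precisely the square of the (single-$\varepsilon$) dyadic square function of the localized paraproduct $g\mapsto\sum_{Q\subseteq R}\langle g,\chi_Q\rangle\langle a,h^\varepsilon_Q\rangle h^\varepsilon_Q$ evaluated at $g=f\chi_R$, a function supported in $R$. The classical $L^q$-boundedness ($1<q<\infty$) of the dyadic paraproduct, with operator norm $\lesssim\|a\|_{{\rm BMO}_{{\rm dyadic}}}$ — which follows from the Carleson embedding theorem together with the dyadic John--Nirenberg inequality, and is unaffected by restricting the sum to $\{Q\subseteq R\}$ since that only lowers the Carleson constant — combined with the $L^q$ dyadic square-function inequality, yields
\[
\|(G_\varepsilon^{\rm near})^{1/2}\|_{L^q(R)}\lesssim\|a\|_{{\rm BMO}_{{\rm dyadic}}}\|f\chi_R\|_{L^q}\le\|a\|_{{\rm BMO}_{{\rm dyadic}}}|R|^{\frac1q-\frac1p}\|f\|_{{\mathcal M}^p_q},
\]
the last inequality again by~(\ref{eq:Morrey}). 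Multiplying by $|R|^{\frac1p-\frac1q}$ and taking the supremum over $R\in{\mathcal D}$ controls the $\varepsilon$-th summand, and summing over the finitely many $\varepsilon\in E$ completes the proof.

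The step I expect to be the main obstacle is the near part: one must make sure that passing to the localized paraproduct over $\{Q\subseteq R\}$ does not degrade the dependence on $\|a\|_{{\rm BMO}_{{\rm dyadic}}}$, i.e.\ that $\{\langle a,h^\varepsilon_Q\rangle\}_{Q\subseteq R}$ still satisfies a Carleson packing condition with constant $\lesssim\|a\|_{{\rm BMO}_{{\rm dyadic}}}^2$ — which is exactly the point where (dyadic) John--Nirenberg is needed, the $L^1$-oscillation definition of ${\rm BMO}_{{\rm dyadic}}$ used here not being immediately sufficient. By comparison, the far-part bound is a soft geometric-series computation and the initial reduction to the square function is immediate from Theorem~\ref{thm1}.
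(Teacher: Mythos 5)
Your approach mirrors the paper's: reduce via Theorem~\ref{thm1} to a Morrey-norm estimate for the Haar square function of the paraproduct, fix a dyadic cube $R$, split the cubes $Q$ that can contribute on $R$ into those with $Q\subseteq R$ and those with $Q\supsetneq R$, and treat each piece. You are more explicit than the paper about the far part $Q\supsetneq R$, which the paper drops silently when it passes from Theorem~\ref{thm1} to its quantity $I$; your geometric-series computation closes that gap. Your near part essentially reproves what the paper simply quotes as Proposition~\ref{prop:2.2}.

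There is, however, a normalization inconsistency in the far part as you wrote it. You take $G_\varepsilon=\sum_Q|\langle f,\chi_Q\rangle|^2|\langle a,h^\varepsilon_Q\rangle|^2|Q|^{-1}\chi_Q$ (the square function coming from the operator exactly as printed in Theorem~\ref{thm2}), and then invoke $|\langle f,\chi_Q\rangle|\lesssim|Q|^{-1/p}\|f\|_{\cMpq}$. But $\langle f,\chi_Q\rangle=\int_Q f$, so H\"older and~(\ref{eq:Morrey}) actually yield $|\langle f,\chi_Q\rangle|\le|Q|^{1-1/p}\|f\|_{\cMpq}$, off by a full factor of $|Q|$. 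Feeding the correct bound into $G^{\rm far}_\varepsilon$ produces $\sum_{k\ge1}|R_{+k}|^{2-2/p}$, which \emph{diverges} because $p>1$. This is not a defect of your strategy but a symptom that the theorem as printed is misnormalized: comparing with Proposition~\ref{prop:2.2}, with the unnumbered Proposition at the end of Section~\ref{section:preliminaries}, and with the way Theorem~\ref{thm2} is applied to the term ${\rm I}_1$ in the proof of Theorem~\ref{thm4}, the intended operator is
\begin{equation*}
\sum_{j}\sum_{Q\in{\mathcal D}_j}\langle f,\chi_Q\rangle\,\langle a,h^\varepsilon_Q\rangle\,\frac{h^\varepsilon_Q}{|Q|}
=\sum_{j}\sum_{Q\in{\mathcal D}_j}m_Q(f)\,\langle a,h^\varepsilon_Q\rangle\,h^\varepsilon_Q .
\end{equation*}
With that correction $G_\varepsilon$ picks up an extra $|Q|^{-2}$, your estimate becomes $|m_Q(f)|\lesssim|Q|^{-1/p}\|f\|_{\cMpq}$ (which is exactly the bound you wrote), the geometric series $\sum_{k\ge1}|R_{+k}|^{-2/p}\sim|R|^{-2/p}$ converges, and your near-part reduction to the classical $L^q$ paraproduct bound becomes literally Proposition~\ref{prop:2.2}. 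So your plan is right, and it genuinely supplies the far-part estimate the paper omits; but you must replace $\langle f,\chi_Q\rangle$ by $m_Q(f)$ (equivalently, insert $|Q|^{-1}$) uniformly so that your exponent arithmetic matches the operator you start from.
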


One formally defines
\begin{equation}\label{eq:9}
I_{\alpha,{\rm dyadic}}f(x)
:=
\sum_{\varepsilon \in E}
\sum_{j=-\infty}^\infty
\sum_{Q \in {\mathcal D}}
|Q|^{\frac{\alpha}{n}}\langle f,h^\varepsilon_Q \rangle h^\varepsilon_Q(x).
\end{equation}
We can justify the definition of $I_{\alpha,{\rm dyadic}}$.
In particular, we can also justify the convergence of the sum
$(\ref{eq:9})$ in the next theorem.
\begin{theorem}
\label{thm3}
Let $0<\alpha<n, \, 1<q \le p<\infty, \, 1<t \le s<\infty$.
Assume 
\begin{equation}
\frac{1}{s}=\frac{1}{p}-\frac{\alpha}{n}, \,
\frac{t}{s}=\frac{q}{p}.
\end{equation}
Then, for every $f \in {\mathcal M}^p_q$,
\begin{align}
\label{eq:100106-1}
I_{\alpha,{\rm dyadic}}f(x)
=\lim_{M \to \infty}
\left(
\sum_{\varepsilon \in E}
\sum_{j=-M}^M \sum_{Q \in {\mathcal D}_j}|Q|^{\frac{\alpha}{n}}
\langle f,h^\varepsilon_Q \rangle h^\varepsilon_Q(x)\right)
\end{align}
converges for almost every $x \in {\mathbb R}^n$
and we have
\begin{equation}
\|I_{\alpha,{\rm dyadic}}f\|_{{\mathcal M}^s_t}
\lesssim
\|f\|_{{\mathcal M}^p_q}.
\end{equation}
\end{theorem}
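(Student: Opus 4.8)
The plan is to avoid working with the Haar coefficients directly and instead to dominate $I_{\alpha,{\rm dyadic}}f$ pointwise by the genuine dyadic fractional integral $J_\alpha f(x):=\sum_{Q\in{\mathcal D},\,Q\ni x}|Q|^{\alpha/n}m_Q(|f|)$, for which an Adams--Hedberg type inequality can be proved by hand. The first step is the domination itself: for fixed $x$ and fixed $j$ the functions $\{h^\varepsilon_Q:Q\in{\mathcal D}_j\}$ have pairwise disjoint supports, so only $Q=Q_j(x)$, the cube of ${\mathcal D}_j$ containing $x$, survives in the inner sum of $(\ref{eq:100106-1})$; since $|h^\varepsilon_Q|=|Q|^{-1/2}\chi_Q$ and $|\langle f,h^\varepsilon_Q\rangle|\le|Q|^{-1/2}\int_Q|f|$, one gets $\bigl|\sum_{Q\in{\mathcal D}_j}|Q|^{\alpha/n}\langle f,h^\varepsilon_Q\rangle h^\varepsilon_Q(x)\bigr|\le|Q_j(x)|^{\alpha/n}m_{Q_j(x)}(|f|)$, and summing over $j$ and $\varepsilon$ gives $\sum_{\varepsilon\in E}\sum_j|\cdots|\le(2^n-1)J_\alpha f(x)$.

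Next I would prove, for a.e.\ $x$, the Hedberg bound $J_\alpha f(x)\lesssim\|f\|_{{\mathcal M}^p_q}^{\alpha p/n}\bigl(M^{\rm dyadic}_r f(x)\bigr)^{p/s}$, where $1<r<q$ is a fixed auxiliary exponent (available since $q>1$) and $M^{\rm dyadic}_r f:=(M^{\rm dyadic}(|f|^r))^{1/r}$. This is the standard splitting: for a parameter $R>0$, on the cubes $Q\ni x$ with $|Q|\le R^n$ one bounds $m_Q(|f|)\le M^{\rm dyadic}_r f(x)$ and sums the geometric series in $|Q|^{\alpha/n}$ to get $\lesssim R^\alpha M^{\rm dyadic}_r f(x)$, while on the cubes with $|Q|>R^n$ one uses the Morrey estimate $m_Q(|f|)\le|Q|^{-1/p}\|f\|_{{\mathcal M}^p_q}$ (whose associated geometric series converges because $\alpha/n<1/p$) to get $\lesssim R^{-n/s}\|f\|_{{\mathcal M}^p_q}$; optimising over $R$ and using the identity $\alpha+n/s=n/p$ produces the claimed estimate.

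Then I would pass to the Morrey norm. From the definition $(\ref{eq:Morrey})$ one has the homogeneity identities $\|h^{p/s}\|_{{\mathcal M}^s_t}=\|h\|_{{\mathcal M}^p_q}^{p/s}$ (which uses exactly $t/s=q/p$), $\|\,|g|^{1/r}\,\|_{{\mathcal M}^p_q}=\|g\|_{{\mathcal M}^{p/r}_{q/r}}^{1/r}$ and $\|\,|f|^r\,\|_{{\mathcal M}^{p/r}_{q/r}}=\|f\|_{{\mathcal M}^p_q}^r$; together with the boundedness of $M^{\rm dyadic}$ on ${\mathcal M}^{p/r}_{q/r}$ (valid since $1<q/r\le p/r<\infty$) these give $\|(M^{\rm dyadic}_r f)^{p/s}\|_{{\mathcal M}^s_t}\lesssim\|f\|_{{\mathcal M}^p_q}^{p/s}$. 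Since $\alpha p/n+p/s=p(\alpha/n+1/s)=1$, the previous step yields $\|J_\alpha f\|_{{\mathcal M}^s_t}\lesssim\|f\|_{{\mathcal M}^p_q}$, hence $\|I_{\alpha,{\rm dyadic}}f\|_{{\mathcal M}^s_t}\le(2^n-1)\|J_\alpha f\|_{{\mathcal M}^s_t}\lesssim\|f\|_{{\mathcal M}^p_q}$. For the almost everywhere convergence in $(\ref{eq:100106-1})$: since ${\mathcal M}^s_t\hookrightarrow L^{t,{\rm loc}}$, the bound just obtained shows $J_\alpha f<\infty$ a.e., so by the first step the series $(\ref{eq:9})$ converges absolutely for a.e.\ $x$ and the symmetric partial sums converge pointwise a.e.\ to $I_{\alpha,{\rm dyadic}}f$, which is then dominated by $(2^n-1)J_\alpha f$; note there is no circularity, as Steps $2$--$3$ bound $J_\alpha f$ without reference to $I_{\alpha,{\rm dyadic}}f$.

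I expect the main obstacle to be the choice of the auxiliary exponent $r$ and the attendant bookkeeping: one cannot use $M^{\rm dyadic}$ itself (it is unbounded on ${\mathcal M}^p_1$, i.e.\ when $q=p$), so the splitting must be run with $1<r<q$, and all the arithmetic in the last two steps is dictated by the two identities $\alpha+n/s=n/p$ and $pt/s=q$ — which is exactly where the hypotheses $1/s=1/p-\alpha/n$ and $t/s=q/p$ get used. The remaining ingredients (disjointness of Haar supports, and the boundedness of $M^{\rm dyadic}$ on Morrey spaces of the Chiarenza--Frasca type) are standard; alternatively one could first invoke Theorem \ref{thm1}$(ii)$ on the target space to replace $I_{\alpha,{\rm dyadic}}f$ by the square function $(\sum_j 2^{-2j\alpha}|g_j|^2)^{1/2}$ with $g_j:=\sum_{Q\in{\mathcal D}_j}\langle f,h^\varepsilon_Q\rangle h^\varepsilon_Q$ and run the same Hedberg argument, but the direct route above seems more economical.
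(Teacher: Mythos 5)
Your proof is correct. The overall strategy is the same Hedberg-type optimization that the paper carries out in Proposition~\ref{prop:1}, but the two arguments run the optimization against genuinely different ``maximal functions.'' The paper bounds the inner sum at scale $j$ by $\min\bigl(\sup_{l}|g^\varepsilon_l(x)|,\,2^{jn/p}\|f\|_{{\mathcal M}^p_q}\bigr)$, where $g^\varepsilon_l:=\sum_{Q\in{\mathcal D}_l}\langle f,h^\varepsilon_Q\rangle h^\varepsilon_Q$, and then, after the geometric-series optimization, needs $\|\sup_l|g^\varepsilon_l|\|_{{\mathcal M}^p_q}\lesssim\|f\|_{{\mathcal M}^p_q}$, which it gets from the Haar square-function characterization (Theorem~\ref{thm1}, since $\sup_l|g^\varepsilon_l|\le(\sum_l|g^\varepsilon_l|^2)^{1/2}$). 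You instead first dominate $I_{\alpha,{\rm dyadic}}f$ by $J_\alpha f(x)=\sum_{Q\ni x}|Q|^{\alpha/n}m_Q(|f|)$ and optimize against $M^{\rm dyadic}_r f$; at the norm level you then use the Chiarenza--Frasca boundedness of the dyadic maximal operator on Morrey spaces rather than Theorem~\ref{thm1}. The two routes are equally valid; yours is more self-contained if one accepts maximal-function boundedness as a black box, while the paper's stays closer to the Haar machinery it has already built, and in the context of the paper is more economical since Theorem~\ref{thm1} is available anyway. Both share the key homogeneity identity $\|u^{p/s}\|_{{\mathcal M}^s_t}=\|u\|_{{\mathcal M}^p_q}^{p/s}$ (where the hypotheses $t/s=q/p$ and $1/s=1/p-\alpha/n$ are used), and both derive a.e.\ convergence from the pointwise absolute convergence established along the way.

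One small remark: your auxiliary exponent $r$ with $1<r<q$ is harmless but unnecessary. The case $q=p$ puts you in ${\mathcal M}^p_p=L^p$ with $p>1$, not in ${\mathcal M}^p_1$; under the standing hypothesis $q>1$, the dyadic maximal operator is already bounded on ${\mathcal M}^p_q$, so you could take $r=1$ throughout.
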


\begin{theorem}
\label{thm4}
Let $0<\alpha<n, \, 1<q \le p<\infty, \, 1<t \le s<\infty$
and $a \in {\rm BMO}_{{\rm dyadic}}$.
Assume 
\begin{equation}
\frac{1}{s}=\frac{1}{p}-\frac{\alpha}{n}, \,
\frac{t}{s}=\frac{q}{p}.
\end{equation}
Then, for every $f \in {\mathcal M}^p_q$,
the limit
\begin{align*}
[a,I_{\alpha,{\rm dyadic}}]f(x)
&=
a(x)I_{\alpha,{\rm dyadic}}f(x)-I_{\alpha,{\rm dyadic}}[a \cdot f](x)\\
&:=
\lim_{M \to \infty}
\sum_{\varepsilon \in E}
\sum_{j=-M}^M
\sum_{Q \in {\mathcal D}_j}
\langle a \cdot I_{\alpha,{\rm dyadic}}f-I_{\alpha,{\rm dyadic}}[a \cdot f],
h^{\varepsilon}_Q \rangle h^{\varepsilon}_Q(x)
\end{align*}
exists in the topology of $L^{q,{\rm loc}}$
and we have
\begin{equation}
\|\,[a,I_{\alpha,{\rm dyadic}}]f\,\|_{{\mathcal M}^s_t}
\lesssim
\|a\|_{{\rm BMO}_{{\rm dyadic}}}
\|f\|_{{\mathcal M}^p_q}.
\end{equation}
\end{theorem}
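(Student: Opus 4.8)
The plan is to exploit two structural features of $I_{\alpha,{\rm dyadic}}$: it is diagonalized by the Haar system, $\langle I_{\alpha,{\rm dyadic}}g,h^\varepsilon_Q\rangle=|Q|^{\alpha/n}\langle g,h^\varepsilon_Q\rangle$, and the product $ag$ decomposes on every $Q\in{\mathcal D}$ as
\[
ag=m_Q(a)m_Q(g)+m_Q(a)\bigl(g-m_Q(g)\bigr)+\bigl(a-m_Q(a)\bigr)m_Q(g)+\bigl(a-m_Q(a)\bigr)\bigl(g-m_Q(g)\bigr).
\]
Write $E_j$ for the conditional expectation onto ${\mathcal D}_j$-measurable functions and $D_j:=E_{j+1}-E_j$, so that $D_jg=\sum_{\varepsilon\in E}\sum_{Q\in{\mathcal D}_j}\langle g,h^\varepsilon_Q\rangle h^\varepsilon_Q$ and $I_{\alpha,{\rm dyadic}}$ multiplies this $j$-th Haar block by $2^{-j\alpha}=|Q|^{\alpha/n}$. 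First I would give a meaning to the Haar coefficients of the a priori only formal function $a\,I_{\alpha,{\rm dyadic}}f-I_{\alpha,{\rm dyadic}}[af]$ by setting, for $Q=Q_{jm}$,
\[
\langle a\,I_{\alpha,{\rm dyadic}}f-I_{\alpha,{\rm dyadic}}[af],\,h^\varepsilon_Q\rangle:=\int_Q\bigl(a\,I_{\alpha,{\rm dyadic}}f\bigr)h^\varepsilon_Q-|Q|^{\alpha/n}\int_Q(af)\,h^\varepsilon_Q,
\]
each integral being finite because $I_{\alpha,{\rm dyadic}}f\in{\mathcal M}^s_t$ and $f\in{\mathcal M}^p_q$ are locally in $L^t$, resp.\ $L^q$, while $a\in{\rm BMO}_{\rm dyadic}$ is locally in $L^r$ for every $r<\infty$ by John--Nirenberg; the remaining task is then to bound the ${\mathcal M}^s_t$-norm of the Haar series with these coefficients and to identify it.

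Next I would carry out the algebra. Inserting the displayed identity with $g=I_{\alpha,{\rm dyadic}}f$ and with $g=f$, and using $\langle I_{\alpha,{\rm dyadic}}f,h^\varepsilon_Q\rangle=|Q|^{\alpha/n}\langle f,h^\varepsilon_Q\rangle$, one sees that the two terms carrying the factor $m_Q(a)$ cancel against each other, and the surviving terms, summed over $Q\in{\mathcal D}_j$ and over $j$, reassemble into
\[
a\,I_{\alpha,{\rm dyadic}}f-I_{\alpha,{\rm dyadic}}[af]=\bigl(\Pi_aI_{\alpha,{\rm dyadic}}f-I_{\alpha,{\rm dyadic}}\Pi_af\bigr)+\bigl(\Gamma_aI_{\alpha,{\rm dyadic}}f-I_{\alpha,{\rm dyadic}}\Gamma_af\bigr),
\]
where $\Pi_af=\sum_{\varepsilon\in E}\sum_j\sum_{Q\in{\mathcal D}_j}m_Q(f)\langle a,h^\varepsilon_Q\rangle h^\varepsilon_Q$ is the paraproduct of Theorem~\ref{thm2} and $\Gamma_ag:=\sum_j(D_ja)(D_jg)=\sum_j\sum_{Q\in{\mathcal D}_j}\sum_{\varepsilon,\varepsilon'\in E}\langle a,h^\varepsilon_Q\rangle\langle g,h^{\varepsilon'}_Q\rangle h^\varepsilon_Qh^{\varepsilon'}_Q$ is the diagonal paraproduct. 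In this step one uses the elementary products $h^\varepsilon_Qh^\varepsilon_Q=|Q|^{-1}\chi_Q$ and $h^\varepsilon_Qh^{\varepsilon'}_Q=|Q|^{-1/2}h^{\varepsilon+\varepsilon'}_Q$ for $\varepsilon\neq\varepsilon'$ (with $\varepsilon+\varepsilon'\in E$ the componentwise sum in $({\mathbb Z}/2{\mathbb Z})^n$), together with the fact that the $j$-th scale block of $\Pi_ag$, and the off-diagonal part of the $j$-th block of $\Gamma_ag$, is a genuine scale-$j$ martingale difference, so that $I_{\alpha,{\rm dyadic}}$ acts on it simply through multiplication by $2^{-j\alpha}$; the ``dual'' paraproduct $\sum_j(E_ja)(D_jg)$ drops out entirely because it commutes with $I_{\alpha,{\rm dyadic}}$.

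Then I would estimate the four resulting operators by chaining the available results. By Theorem~\ref{thm2}, $\Pi_a$ maps ${\mathcal M}^r_u$ into itself with norm $\lesssim\|a\|_{{\rm BMO}_{\rm dyadic}}$ for all $1<u\le r<\infty$, and the same holds for $\Gamma_a$ (see the last paragraph). Since Theorem~\ref{thm3} gives $I_{\alpha,{\rm dyadic}}\colon{\mathcal M}^p_q\to{\mathcal M}^s_t$ boundedly under the present hypotheses on the exponents, applying Theorem~\ref{thm2} at the exponents $(s,t)$ for the two terms in which $\Pi_a$ or $\Gamma_a$ acts last, and at $(p,q)$ for the two in which it acts first, one gets that each of the four functions $\Pi_aI_{\alpha,{\rm dyadic}}f$, $I_{\alpha,{\rm dyadic}}\Pi_af$, $\Gamma_aI_{\alpha,{\rm dyadic}}f$, $I_{\alpha,{\rm dyadic}}\Gamma_af$ lies in ${\mathcal M}^s_t$ with norm $\lesssim\|a\|_{{\rm BMO}_{\rm dyadic}}\|f\|_{{\mathcal M}^p_q}$. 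By Theorem~\ref{thm1}$(i)$ the same bound then holds for the corresponding square functions, hence, by Minkowski's inequality, for the square function of the whole Haar series defining $[a,I_{\alpha,{\rm dyadic}}]f$; Theorem~\ref{thm1}$(ii)$ finally guarantees that the symmetric partial sums converge in $L^{q,{\rm loc}}$ (indeed in $L^{t,{\rm loc}}$, and $t\ge q$ since $s>p$) to an ${\mathcal M}^s_t$-function, with the asserted norm inequality.

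The hardest part will be the honest treatment of the diagonal contribution $\Gamma_a$ and of the infinite sums. The operator $\Gamma_a$ is not literally one of those in Theorems~\ref{thm1}--\ref{thm3}, so one has to establish its ${\mathcal M}^r_u$-boundedness with the right dependence on $\|a\|_{{\rm BMO}_{\rm dyadic}}$; I would do this by splitting $\Gamma_a=\Pi_a^*+\Gamma_a^{\rm od}$, where $\Pi_a^*g=\sum_j\sum_{Q\in{\mathcal D}_j}|Q|^{-1}\chi_Q\sum_{\varepsilon}\langle a,h^\varepsilon_Q\rangle\langle g,h^\varepsilon_Q\rangle$ is the adjoint paraproduct — handled by Theorem~\ref{thm2} together with the Morrey/predual duality that is the leitmotif of the paper — and $\Gamma_a^{\rm od}g=\sum_j\bigl[(D_ja)(D_jg)-E_j[(D_ja)(D_jg)]\bigr]$ is a sum of martingale differences whose square function satisfies the pointwise bound $\bigl(\sum_j|(D_ja)(D_jg)|^2\bigr)^{1/2}\le\bigl(\sup_j|D_ja|\bigr)S(g)\lesssim\|a\|_{{\rm BMO}_{\rm dyadic}}S(g)$ (using $|D_ja(x)|\lesssim M^{\sharp,{\rm dyadic}}a(x)\le\|a\|_{{\rm BMO}_{\rm dyadic}}$), after which Theorem~\ref{thm1} converts this into the desired Morrey bound. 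The other delicate point is that all the rearrangements above are performed on doubly-infinite Haar series, so one must justify the interchange of summations and the block identifications: this is exactly where the a priori membership $I_{\alpha,{\rm dyadic}}f\in{\mathcal M}^s_t$, the Hölder and John--Nirenberg bounds on the individual Haar coefficients, and the inequality $\alpha<n/p$ (equivalently $s<\infty$), which forces $E_{-M}$ of every ${\mathcal M}^s_t$-function to tend to $0$ in $L^{t,{\rm loc}}$, all enter.
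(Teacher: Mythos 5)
Your decomposition is essentially the paper's own, organised through the standard Bony/martingale paraproduct language: the piece $\sum_j(E_ja)(D_j\cdot)$ commutes with $I_{\alpha,\mathrm{dyadic}}$ and drops out; the piece $\Pi_a=\sum_j(D_ja)(E_j\cdot)$ yields exactly the paper's ${\rm I}_1-{\rm I}_2=\Pi_aI_{\alpha,\mathrm{dyadic}}f-I_{\alpha,\mathrm{dyadic}}\Pi_af$, handled by Theorems \ref{thm2} and \ref{thm3}; and the diagonal $\Gamma_a=\sum_j(D_ja)(D_j\cdot)$ yields the paper's ${\rm II}-{\rm III}$ (the $\varepsilon\ne\varepsilon'$ cross terms in $\Gamma_a$ commute with $I_{\alpha,\mathrm{dyadic}}$ and cancel). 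The only genuine departure is your treatment of $\Gamma_a$: the paper never proves $\Gamma_a$ bounded on Morrey spaces; it estimates ${\rm II}$ and ${\rm III}$ directly, using the crude bound $|\langle a,h^\varepsilon_Q\rangle|\lesssim|Q|^{1/2}\|a\|_{\mathrm{BMO}_{\mathrm{dyadic}}}$ to reduce both to the positive operator of Proposition \ref{prop:1}, thereby exploiting the smoothing factor $|Q|^{\alpha/n}$ that $I_{\alpha,\mathrm{dyadic}}$ inserts. You instead propose to bound $\Gamma_aI_{\alpha,\mathrm{dyadic}}f$ and $I_{\alpha,\mathrm{dyadic}}\Gamma_af$ separately, which forces you to prove $\Gamma_a\colon{\mathcal M}^r_u\to{\mathcal M}^r_u$ with norm $\lesssim\|a\|_{\mathrm{BMO}_{\mathrm{dyadic}}}$ — a strictly stronger statement than what is required, and not established in the paper.

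There is a concrete gap in your sketch of that stronger claim, namely the treatment of $\Pi_a^*$. You write that it is ``handled by Theorem \ref{thm2} together with the Morrey/predual duality,'' but that duality runs the wrong way here: ${\mathcal M}^p_q$ is the \emph{dual} of ${\mathcal H}^{p'}_{q'}$, not its predual, so the ${\mathcal M}^p_q$-boundedness of $\Pi_a$ from Theorem \ref{thm2} transfers to the bidual of ${\mathcal H}^{p'}_{q'}$, not to ${\mathcal M}^p_q$ itself, and the paper proves nothing about $\Pi_a$ acting on ${\mathcal H}^{p'}_{q'}$. To repair this you must argue directly: obtain $L^q$-boundedness of $\Pi_a^*$ from Proposition \ref{prop:2.2} by honest $L^q/L^{q'}$ duality, and then redo the cube-localisation argument of the Proposition giving (\ref{eq:22}) (splitting $f=\chi_Sf+\chi_{\mathbb{R}^n\setminus S}f$ and the cubes into $Q\subset S$ and $Q\supsetneq S$, with the crude coefficient bound controlling the tail). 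This works, but it is more machinery than the paper needs; you could sidestep it entirely by handling the surviving diagonal pieces $\Pi_a^*I_{\alpha,\mathrm{dyadic}}f$ and $I_{\alpha,\mathrm{dyadic}}\Pi_a^*f$ exactly as the paper handles ${\rm II}$ and ${\rm III}$, since both carry the $|Q|^{\alpha/n}$ factor that makes Proposition \ref{prop:1} applicable after the crude bound on $\langle a,h^\varepsilon_Q\rangle$.
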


Next, we prove
that the operator norm is characterized
by the dyadic BMO norm.
\begin{theorem}
\label{thm5}
Let $a \in {\rm BMO}_{{\rm dyadic}}$.
Suppose that we are given parameters
$p,q,s,t,\alpha$ satisfying
\[
1<q \le p<\infty, \quad
1<t \le s<\infty, \quad
0<\alpha<n
\]
and 
\[
\frac{p}{q}=\frac{t}{s}, \quad
\frac{1}{s}=\frac{1}{p}-\frac{\alpha}{n}.
\]
Then we have
\begin{align*}
\|\,[a,I_{\alpha,{\rm dyadic}}]\,\|_{B({\mathcal M}^p_q,{\mathcal M}^s_t)}
\sim
\|a\|_{{\rm BMO}_{{\rm dyadic}}}.
\end{align*}
\end{theorem}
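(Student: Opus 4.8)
The bound $\|[a,I_{\alpha,{\rm dyadic}}]\|_{B(\cMpq,{\mathcal M}^s_t)}\lesssim\|a\|_{{\rm BMO}_{{\rm dyadic}}}$ is exactly Theorem~\ref{thm4}, so the content of the statement is the reverse inequality, which I would prove by testing the commutator against indicators of dyadic cubes. The starting point is a closed form for $I_{\alpha,{\rm dyadic}}$. From the reproducing-kernel identity of the Haar system,
\[
\sum_{\varepsilon\in E}h^\varepsilon_Q(x)\,h^\varepsilon_Q(y)
=\sum_{Q'}\frac{1}{|Q'|}\chi_{Q'}(x)\chi_{Q'}(y)-\frac{1}{|Q|}\chi_Q(x)\chi_Q(y)
\]
($Q'$ running over the $2^n$ children of $Q$; the left side is the $\varepsilon\neq0$ part of the full rank-$2^n$ projection kernel attached to $Q$), summing the two resulting geometric series over the dyadic cubes containing a fixed pair $x\neq y$ yields, for bounded compactly supported $g$,
\[
I_{\alpha,{\rm dyadic}}g(x)=c_{n,\alpha}\int_{\R^n}|Q(x,y)|^{\frac{\alpha}{n}-1}\,g(y)\,dy,
\qquad
c_{n,\alpha}:=\frac{2^n-2^{n-\alpha}}{2^{n-\alpha}-1}>0,
\]
where $Q(x,y)$ is the smallest dyadic cube containing $x$ and $y$; the interchange of sum and integral is justified by absolute convergence exactly as in the proof of Theorem~\ref{thm3}. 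In particular the kernel is positive, and for bounded compactly supported $f$,
\[
[a,I_{\alpha,{\rm dyadic}}]f(x)=c_{n,\alpha}\int_{\R^n}\bigl(a(x)-a(y)\bigr)\,|Q(x,y)|^{\frac{\alpha}{n}-1}\,f(y)\,dy .
\]

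Next I would fix a dyadic cube $Q_0$, put $\Omega:=m_{Q_0}(|a-m_{Q_0}(a)|)$ and assume $\Omega>0$. Set $S:=\{y\in Q_0:a(y)\le m_{Q_0}(a)\}$ and $T:=Q_0\setminus S$; then $|S|,|T|>0$, and, the case $|S|\ge|Q_0|/2$ being symmetric (one tests on $\chi_T$), I may assume $|T|\ge|Q_0|/2$. Take $f:=\chi_S\in\cMpq$. For $x\in T$ and $y\in S$ one has $a(x)-a(y)>0$ and $Q(x,y)\subseteq Q_0$, hence $|Q(x,y)|^{\frac{\alpha}{n}-1}\ge|Q_0|^{\frac{\alpha}{n}-1}$, so for almost every $x\in T$,
\begin{align*}
[a,I_{\alpha,{\rm dyadic}}]\chi_S(x)
&\ge c_{n,\alpha}|Q_0|^{\frac{\alpha}{n}-1}\int_S\bigl(a(x)-a(y)\bigr)\,dy\\
&\ge c_{n,\alpha}|Q_0|^{\frac{\alpha}{n}-1}\int_S\bigl(m_{Q_0}(a)-a(y)\bigr)\,dy
=\tfrac12\,c_{n,\alpha}\,|Q_0|^{\frac{\alpha}{n}}\,\Omega ,
\end{align*}
where the middle step uses $a(x)\ge m_{Q_0}(a)$ on $T$ and the last uses $\int_S(m_{Q_0}(a)-a)=\tfrac12|Q_0|\,\Omega$.

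Finally I would compare Morrey norms. Testing the ${\mathcal M}^s_t$ norm on $Q_0$ and using $|T|\ge|Q_0|/2$ together with $\tfrac1s+\tfrac{\alpha}{n}=\tfrac1p$,
\begin{align*}
\|[a,I_{\alpha,{\rm dyadic}}]\chi_S\|_{{\mathcal M}^s_t}
&\ge|Q_0|^{\frac1s-\frac1t}\Bigl(\int_T\bigl(\tfrac12 c_{n,\alpha}|Q_0|^{\frac{\alpha}{n}}\Omega\bigr)^t\,dx\Bigr)^{\frac1t}\\
&\gtrsim|Q_0|^{\frac1s+\frac{\alpha}{n}}\,\Omega=|Q_0|^{\frac1p}\,\Omega ,
\end{align*}
whereas an elementary estimate, using $S\subseteq Q_0$ and the nesting of dyadic cubes, gives $\|\chi_S\|_{\cMpq}\le|Q_0|^{\frac1p}$. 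Dividing and taking the supremum over all dyadic $Q_0$ gives $\|[a,I_{\alpha,{\rm dyadic}}]\|_{B(\cMpq,{\mathcal M}^s_t)}\gtrsim\|a\|_{{\rm BMO}_{{\rm dyadic}}}$, which completes the proof. The only genuinely delicate point in this plan is the first step: beyond the geometric-series computation, one has to verify that the above integral representation of $[a,I_{\alpha,{\rm dyadic}}]$ on the test functions $\chi_S$ coincides with the $L^{q,{\rm loc}}$-limit by which $[a,I_{\alpha,{\rm dyadic}}]$ is defined in Theorem~\ref{thm4}; this reduces to the facts that $a\chi_S\in\cMpq$ and that the Haar expansion of $a\cdot I_{\alpha,{\rm dyadic}}\chi_S-I_{\alpha,{\rm dyadic}}[a\chi_S]$ reconstructs that function, after which only the elementary testing argument above remains.
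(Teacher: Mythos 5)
The proposal takes a genuinely different route from the paper. The paper tests $[a,I_{\alpha,{\rm dyadic}}]$ on the Haar functions $h^{\varepsilon''}_U$ themselves: since $I_{\alpha,{\rm dyadic}}h^\varepsilon_Q=|Q|^{\alpha/n}h^\varepsilon_Q$, the commutator applied to a single Haar function is \emph{already} a Haar series (with coefficients $(|U|^{\alpha/n}-|Q|^{\alpha/n})\langle a,h^\varepsilon_Q\rangle$ on the cubes $Q\subsetneq U$ plus a diagonal term), and the non-homogeneous reconstruction in Proposition \ref{prop:2.1} immediately gives the estimate $m_U(|\sum_{Q\subset U}\langle a,h^\varepsilon_Q\rangle h^\varepsilon_Q|)\lesssim\|[a,I_{\alpha,{\rm dyadic}}]\|$, which is exactly the BMO quantity. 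You instead derive the closed-form kernel $I_{\alpha,{\rm dyadic}}g(x)=c_{n,\alpha}\int|Q(x,y)|^{\alpha/n-1}g(y)\,dy$ from the geometric series (your constant $c_{n,\alpha}=\frac{2^n-2^{n-\alpha}}{2^{n-\alpha}-1}$ is correct), and then run the classical sign-splitting argument on $\chi_S$, $\chi_T$ that goes back to Chanillo and Janson. Both arguments succeed and both produce $\|a\|_{{\rm BMO}_{\rm dyadic}}\lesssim\|[a,I_{\alpha,{\rm dyadic}}]\|_{B({\mathcal M}^p_q,{\mathcal M}^s_t)}$; the upper bound is Theorem \ref{thm4} either way.

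The one place where you need more care is exactly the point you flag in your last paragraph, and it is not quite as routine as you suggest. What you actually prove is a pointwise lower bound on the \emph{kernel} expression $g:=c_{n,\alpha}\int(a(\cdot)-a(y))|Q(\cdot,y)|^{\alpha/n-1}\chi_S(y)\,dy$, whereas the object whose ${\mathcal M}^s_t$-norm you must bound is the Haar reconstruction $\lim_M\sum_{|j|\le M}\sum_Q\langle g,h^\varepsilon_Q\rangle h^\varepsilon_Q$ of Theorem \ref{thm4}. These two agree on each dyadic cube only up to an additive constant (Proposition \ref{prop:2.1}(iii) controls $g-m_R(g)$, not $g$ itself), and a lower bound of the form $g\ge A>0$ on $T$ does not survive the subtraction of an unknown constant $c$: since $|S|$ can be arbitrarily small relative to $|Q_0|$, the oscillation of $g$ over $Q_0$ need not be $\gtrsim A$. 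You should close this by showing $c=0$ -- e.g.\ averaging $|g+c|$ over the dyadic ancestors $R_k=(Q_0)_{+k}$ and using that $m_{R_k}(|g+c|)\le|R_k|^{-1/s}\|g+c\|_{{\mathcal M}^s_t}\to0$ while $m_{R_k}(|g|)\lesssim|R_k|^{\alpha/n-1}(1+\log|R_k|)\to0$ by the crude tail estimate on the kernel, forcing $|c|=0$. Once that is in place your proof is complete. The paper's choice of test functions avoids this issue entirely, because $[a,I_{\alpha,{\rm dyadic}}]h^{\varepsilon''}_U$ is produced directly as a Haar series and there is no competing "naive'' pointwise formula to reconcile; that is the main advantage of the paper's route, while yours has the advantage of being more elementary and of exhibiting the positive kernel $|Q(x,y)|^{\alpha/n-1}$ explicitly.
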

Needless to say, it is significant
to prove that 
\begin{align*}
\|\,[a,I_{\alpha,{\rm dyadic}}]\,\|_{B({\mathcal M}^p_q,{\mathcal M}^s_t)}
\gtrsim
\|a\|_{{\rm BMO}_{{\rm dyadic}}}
\end{align*}
in view of Theorem \ref{thm4}.
In the usual setting of $p=q$ and $s=t$,
Theorem \ref{thm4} is known
as the result due to S.~Chanillo
\cite{Chanillo-1982-Indiana}.

All the results above carry over to predual spaces.
Recall that the predual space ${\mathcal H}^p_q$
of the Morrey space ${\mathcal M}^{p'}_{q'}$
is given as follows:
Let $1<p \le q<\infty$.
\begin{enumerate}
\item[$(i)$]
A function $A \in L^q$ is said to be a $(p,q)$-block,
if there exists a dyadic cube $Q$ such that
$\|A\|_{L^q} \le |Q|^{\frac{1}{q}-\frac{1}{p}}$
and that $A$ is supported on $Q$.
\item[$(ii)$]
The predual space ${\mathcal H}^p_q$ is given by
\begin{equation}
{\mathcal H}^p_q:=
\left\{
\sum_{j=1}^\infty \lambda_j a_j \, : \,
\sum_{j=1}^\infty |\lambda_j|<\infty
\mbox{ and each }
a_j \mbox{ is a }(p,q)\mbox{-block}
\right\}
\end{equation}
and the norm is given by
\begin{equation}
\|f\|_{{\mathcal H}^p_q}
:=
\inf\left\{
\sum_{j=1}^\infty |\lambda_j| \, : \,
f=\sum_{j=1}^\infty \lambda_j a_j 
\mbox{ and each }
a_j \mbox{ is a }(p,q)\mbox{-block}
\right\}
\end{equation}
for $f \in {\mathcal H}^p_q$.
\end{enumerate}

A well-known fact is that
the dual of ${\mathcal H}^{p'}_{q'}$ 
is ${\mathcal M}^p_q$ (see \cite{Zo}).
Therefore, it seems easy to prove this theorem
by duality.
\begin{theorem}\label{thm6}
Let $0<\alpha<n$, \, $1<r \le r_0<\infty$ and $1<p \le p_0<\infty$.
Assume in addition 
$$
\frac{1}{r_0}=\frac{1}{p_0}-\frac{\alpha}{n}, \quad
\frac{r}{r_0}=\frac{p}{p_0}.
$$
\begin{enumerate}
\item[$(i)$]
The fractional integral operator $I_{\alpha,{\rm dyadic}}$,
which is originally defined on $L^{r_0'}$,
is bounded from ${\mathcal H}^{r_0'}_{r'}$ 
to ${\mathcal H}^{p_0'}_{p'}$.
That is,
\[
\|I_{\alpha,{\rm dyadic}}f\|_{{\mathcal H}^{p_0'}_{p'}}
\le C
\|f\|_{{\mathcal H}^{r_0'}_{r'}}
\]
for all $f \in {\mathcal H}^{r_0'}_{r'}$
\item[$(ii)$]
The commutator $[a,I_{\alpha,{\rm dyadic}}]$,
which is originally defined on $L^{r_0'}$,
is bounded from ${\mathcal H}^{r_0'}_{r'}$ 
to ${\mathcal H}^{p_0'}_{p'}$.
\end{enumerate}
\end{theorem}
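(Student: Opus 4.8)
The plan is to deduce Theorem~\ref{thm6} from Theorems~\ref{thm3} and~\ref{thm4} by duality, as the remark preceding the statement suggests. First note that, under the standing hypotheses, the block spaces $\mathcal H^{r_0'}_{r'}$ and $\mathcal H^{p_0'}_{p'}$ are legitimately defined (since $r\le r_0$ gives $r_0'\le r'$ and $p\le p_0$ gives $p_0'\le p'$), that their duals are $\cM^{r_0}_{r}$ and $\cM^{p_0}_{p}$ by \cite{Zo}, and that the duality is realised by the integral pairing $\langle f,g\rangle=\int fg$. Applying Theorem~\ref{thm3} with the substitution $(p,q,s,t)\mapsto(p_0,p,r_0,r)$ --- admissible since $\frac1{r_0}=\frac1{p_0}-\frac\alpha n$ and $\frac{r}{r_0}=\frac{p}{p_0}$ are exactly the relations it requires --- gives $\|I_{\alpha,{\rm dyadic}}g\|_{\cM^{r_0}_{r}}\lesssim\|g\|_{\cM^{p_0}_{p}}$, and the same substitution in Theorem~\ref{thm4} gives $\|[a,I_{\alpha,{\rm dyadic}}]g\|_{\cM^{r_0}_{r}}\lesssim\|a\|_{{\rm BMO}_{{\rm dyadic}}}\|g\|_{\cM^{p_0}_{p}}$. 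Moreover the cases $p=q=r_0'$, $s=t=p_0'$ of these two theorems say exactly that $I_{\alpha,{\rm dyadic}}$ and $[a,I_{\alpha,{\rm dyadic}}]$ are bounded from $L^{r_0'}$ to $L^{p_0'}$. Finally, since $I_{\alpha,{\rm dyadic}}$ is assembled from the \emph{symmetric} Haar expansion \eqref{eq:9}, it is formally self-adjoint with respect to $\langle\cdot,\cdot\rangle$, and hence $[a,I_{\alpha,{\rm dyadic}}]$ is formally skew-adjoint; this is the mechanism by which the Morrey estimates will be transported to the preduals.

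Next I would reduce matters to blocks. If $A$ is an $(r_0',r')$-block supported on a dyadic cube $Q$, Hölder's inequality on $Q$ (using $r_0'\le r'$) gives $\|A\|_{L^{r_0'}}\le|Q|^{\frac1{r_0'}-\frac1{r'}}\|A\|_{L^{r'}}\le 1$, so $\mathcal H^{r_0'}_{r'}\hookrightarrow L^{r_0'}$ and both $I_{\alpha,{\rm dyadic}}A$ and $[a,I_{\alpha,{\rm dyadic}}]A$ are well-defined elements of $L^{p_0'}$. Because finite sums of blocks are dense in $\mathcal H^{r_0'}_{r'}$, the space $\mathcal H^{p_0'}_{p'}$ is complete, and $\mathcal H^{p_0'}_{p'}\hookrightarrow L^{p_0'}$, it suffices to exhibit a constant $C$ with $\|I_{\alpha,{\rm dyadic}}A\|_{\mathcal H^{p_0'}_{p'}}\le C$ and $\|[a,I_{\alpha,{\rm dyadic}}]A\|_{\mathcal H^{p_0'}_{p'}}\le C\|a\|_{{\rm BMO}_{{\rm dyadic}}}$ for every block $A$; summing over a block decomposition of an arbitrary $f$ and passing to the infimum then delivers both parts of the theorem. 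Assuming for the moment that $I_{\alpha,{\rm dyadic}}A\in\mathcal H^{p_0'}_{p'}$, its norm is the supremum of $|\langle I_{\alpha,{\rm dyadic}}A,g\rangle|$ over $g\in\cM^{p_0}_{p}$ with $\|g\|_{\cM^{p_0}_{p}}\le1$, and for such $g$
\begin{align*}
\bigl|\langle I_{\alpha,{\rm dyadic}}A,g\rangle\bigr|
&=\bigl|\langle A,I_{\alpha,{\rm dyadic}}g\rangle\bigr|
\le\|A\|_{L^{r'}}\left(\int_Q|I_{\alpha,{\rm dyadic}}g(y)|^{r}\,dy\right)^{1/r}\\
&\le|Q|^{\frac1{r'}-\frac1{r_0'}}\,|Q|^{\frac1r-\frac1{r_0}}\,
\|I_{\alpha,{\rm dyadic}}g\|_{\cM^{r_0}_{r}}
\lesssim\|g\|_{\cM^{p_0}_{p}}\le1 ,
\end{align*}
because $\left(\frac1{r'}-\frac1{r_0'}\right)+\left(\frac1r-\frac1{r_0}\right)=0$ and by the Morrey bound above; the commutator is treated identically with the skew-adjoint identity $\langle[a,I_{\alpha,{\rm dyadic}}]A,g\rangle=-\langle A,[a,I_{\alpha,{\rm dyadic}}]g\rangle$ and Theorem~\ref{thm4}, and produces the factor $\|a\|_{{\rm BMO}_{{\rm dyadic}}}$. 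The transposition identities used here, $\langle I_{\alpha,{\rm dyadic}}A,g\rangle=\langle A,I_{\alpha,{\rm dyadic}}g\rangle$ and its skew version, are first checked for truncated Haar sums and then passed to the limit by an approximation argument using the convergence assertions of Theorems~\ref{thm3} and~\ref{thm4}.

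I expect the main obstacle to be the point set aside above: showing that $I_{\alpha,{\rm dyadic}}A$ and $[a,I_{\alpha,{\rm dyadic}}]A$ actually belong to the predual $\mathcal H^{p_0'}_{p'}$, not merely to $L^{p_0'}\subseteq(\cM^{p_0}_{p})^*$ --- since $\mathcal H^{p_0'}_{p'}$ is a \emph{proper} subspace of $(\cM^{p_0}_{p})^*$, finiteness of the testing supremum does not by itself place a function in $\mathcal H^{p_0'}_{p'}$. For $I_{\alpha,{\rm dyadic}}A$ I would split the Haar expansion according to whether the summation cube $R$ is contained in $Q$ or is a proper dyadic ancestor of $Q$ (the other cubes contribute nothing): the ancestor part is a series of single Haar functions $h^\varepsilon_R$ whose coefficients have size $|R|^{\alpha/n-1/2}|\langle A,\chi_Q\rangle|\le|R|^{\alpha/n-1/2}|Q|^{1/r_0}$, so that the exponent identity $\frac\alpha n-1+\frac1{p_0'}=-\frac1{r_0}$ exhibits each term as $(|Q|/|R|)^{1/r_0}$ times a $(p_0',p')$-block on $R$, and summing over the ancestors gives a convergent geometric series, hence an element of $\mathcal H^{p_0'}_{p'}$ of norm $\lesssim1$ uniformly in $A$; the remaining part is then $I_{\alpha,{\rm dyadic}}A$ minus this, a function supported on $Q$ and lying in $L^{p_0'}$, and such compactly supported $L^{p_0'}$ functions belong to $\mathcal H^{p_0'}_{p'}$ with norm $\lesssim\|I_{\alpha,{\rm dyadic}}A\|_{L^{p_0'}}\lesssim\|A\|_{L^{r_0'}}\le1$ by the predual characterisation in \cite{Zo}. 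The commutator $[a,I_{\alpha,{\rm dyadic}}]A$ is handled by the same scheme, the single new ingredient being the standard logarithmic bound for the oscillation of $a\in{\rm BMO}_{{\rm dyadic}}$ across the dilates of $Q$, which is absorbed by the geometric decay and yields the factor $\|a\|_{{\rm BMO}_{{\rm dyadic}}}$. With these membership facts secured, the norm estimates are precisely the short duality computation above, which completes the proof of Theorem~\ref{thm6}.
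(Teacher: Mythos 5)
Your plan is genuinely different from the paper's: the paper reduces to finite linear combinations of Haar functions (asserting these are dense in ${\mathcal H}^{r_0'}_{r'}$), then uses that $I_{\alpha,{\rm dyadic}}$ sends a finite Haar sum to a finite Haar sum, hence trivially into ${\mathcal H}^{p_0'}_{p'}$; you instead reduce to a single $(r_0',r')$-block $A$ and try to verify membership in ${\mathcal H}^{p_0'}_{p'}$ directly by splitting the Haar expansion of $I_{\alpha,{\rm dyadic}}A$ into the ancestor and contained parts. You correctly flag that the membership step is the whole point of the exercise (the paper makes the same remark via its ``wrong proof''), and your ancestor-part calculation is correct: the exponent arithmetic $\frac{\alpha}{n}-1+\frac{1}{p_0'}=-\frac{1}{r_0}$ does give the geometric series $\sum_{k\ge 1}(|Q|/|Q_{+k}|)^{1/r_0}$.

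The contained part, however, is where the argument breaks. You assert that a compactly supported $L^{p_0'}$ function lies in ${\mathcal H}^{p_0'}_{p'}$ ``by the predual characterisation in \cite{Zo}.'' This is false whenever $p<p_0$ (equivalently $p_0'<p'$). Blocks are locally $L^{p'}$ objects, and the dual of ${\mathcal H}^{p_0'}_{p'}$ is ${\mathcal M}^{p_0}_p$, whose members are only locally $L^p$ — not locally $L^{p_0}$. Concretely, in $n=1$ take $p_0=2$, $p=4/3$ (so $p_0'=2$, $p'=4$). Then $h(x)=x^{-1/2}\chi_{(0,1)}(x)\in{\mathcal M}^2_{4/3}$, but $g(x)=x^{-1/2}|\log x|^{-\beta}\chi_{(0,1)}(x)$ with $1/2<\beta\le 1$ belongs to $L^2((0,1))=L^{p_0'}(Q)$ while $\int_0^1 g\,h=\int_0^1 x^{-1}|\log x|^{-\beta}\,dx=\infty$; hence $g$ cannot be an $\ell^1$-combination of $(p_0',p')$-blocks, i.e.\ $g\notin {\mathcal H}^{2}_{4}$. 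So $L^{p_0'}(Q)\not\subset{\mathcal H}^{p_0'}_{p'}$, and this step of the proof does not go through. The gap is not cosmetic: the contained part of $I_{\alpha,{\rm dyadic}}A$ is, by Hardy--Littlewood--Sobolev applied to $A\in L^{r'}(Q)$, in $L^{q'}(Q)$ with $\frac{1}{q'}=\frac{1}{r'}-\frac{\alpha}{n}$, and one checks that $p_0'\le q'\le p'$ with strict inequality $q'<p'$ unless $p=p_0$ — so it is not a scalar multiple of a $(p_0',p')$-block either, and a naive scale-by-scale block decomposition of it does not converge. This is precisely the difficulty the paper's reduction avoids: for a single Haar function $h^\varepsilon_Q$, one has $I_{\alpha,{\rm dyadic}}h^\varepsilon_Q=|Q|^{\alpha/n}h^\varepsilon_Q$ exactly, and for $[a,I_{\alpha,{\rm dyadic}}]h^\varepsilon_Q$ the function $a\,h^\varepsilon_Q$ is, by John--Nirenberg, in $L^s(Q)$ for every $s<\infty$, so the localized fractional integral of it lands in $L^{p'}(Q)$ — integrability a generic block does not have. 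So the reduction should be to finite Haar sums, not to blocks, and the membership claim needs this extra integrability.

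Apart from this, the duality estimates and the transposition identities you write down are correct, and the exponent identity $\left(\frac{1}{r'}-\frac{1}{r_0'}\right)+\left(\frac{1}{r}-\frac{1}{r_0}\right)=0$ is what makes the pairing computation close. But as it stands the proposal does not prove the theorem, because the contained-part membership is unjustified and the stated reason for it is wrong.
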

Actually, we invoke dualtiy to prove this theorem.
However, we need to pay attention
to perform duality argument.
Here is a \lq \lq wrong" proof for $I_{\alpha,{\rm dyadic}}$.
The same can be said for $[a,I_{\alpha,{\rm dyadic}}]$
or $I_\alpha$.
\begin{proof}[{\bf Wrong proof of Thoerem \ref{thm6}}]
By duality argument,
we have
\[
\|I_{\alpha,{\rm dyadic}}f\|_{{\mathcal H}^{p_0'}_{p'}}
=
\sup
\left\{
\left|\int_{{\mathbb R}^n}I_{\alpha,{\rm dyadic}}f(x)h(x)\,dx\right|
\,:\,h \in {\mathcal M}^{p_0}_p, \, \|h\|_{{\mathcal M}^{p_0}_p}=1\right\}.
\]
In view of the definition of $I_{\alpha,{\rm dyadic}}$,
we have
\[
\int_{{\mathbb R}^n}I_{\alpha,{\rm dyadic}}f(x)h(x)\,dx
=
\int_{{\mathbb R}^n}f(x)I_{\alpha,{\rm dyadic}}h(x)\,dx.
\]
If we invoke the boundedness of $I_{\alpha,{\rm dyadic}}$
obtained in Theorem \ref{thm3}
and we denote by $
\|I_{\alpha,{\rm dyadic}}\|_{B({\mathcal M}^{p_0}_{p},{\mathcal M}^{r_0}_{r})}$
the operator norm,
then we have
\[
\|I_{\alpha,{\rm dyadic}}f\|_{{\mathcal H}^{p_0'}_{p'}}
\le 
\|I_{\alpha,{\rm dyadic}}\|_{B({\mathcal M}^{p_0}_{p},{\mathcal M}^{r_0}_{r})}
\|f\|_{{\mathcal H}^{r_0'}_{r'}}.
\]
The proof is now complete.
\end{proof}
Here is some gap in the proof:
There is no guarantee for $I_{\alpha,{\rm dyadic}}f$
to be a member of ${\mathcal H}^{p_0'}_{p'}$.
So to overcome this trouble,
we need to take full advantage of the dyadic fractional integral  operator
$I_{\alpha,{\rm dyadic}}$:
$I_{\alpha,{\rm dyadic}} h^\varepsilon_R
=|R|^{\frac{\alpha}{n}}h^\varepsilon_R$.

Next, we investigate the compactness of the commutator
$[a,I_{\alpha,{\rm dyadic}}]$.
To this end we define
${\rm VMO}_{{\rm dyadic}}$ as the closure of 
${\rm Span}
(\{h^\varepsilon_Q\}_{\varepsilon \in E, \, Q \in {\mathcal D}})$,
where ${\rm Span}(A)$ denotes a linear subspace
generated by a set $A$.

\begin{theorem}
\label{thm7}
Let $0<\alpha<n$, \,$1<q \le p<\infty$ and $1<t \le s<\infty$.
Assume 
\begin{equation}
\frac{1}{s}=\frac{1}{p}-\frac{\alpha}{n}, \quad
\frac{t}{s}=\frac{q}{p}.
\end{equation}
Then $a \in {\rm BMO}_{{\rm dyadic}}$ generates a compact commutator 
$[a,I_{\alpha,{\rm dyadic}}]:{\mathcal M}^p_q \to {\mathcal M}^s_t$
if and only if 
$a \in {\rm VMO}_{{\rm dyadic}}$.
\end{theorem}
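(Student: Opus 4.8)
The plan is to prove the two implications separately, using throughout the diagonal action $I_{\alpha,{\rm dyadic}}h^\varepsilon_R=|R|^{\frac{\alpha}{n}}h^\varepsilon_R$. For the \emph{sufficiency} ($a\in{\rm VMO}_{{\rm dyadic}}$ implies compactness), I would first reduce to $a\in{\rm Span}(\{h^\varepsilon_Q\})$: by Theorem \ref{thm4} the assignment $a\mapsto[a,I_{\alpha,{\rm dyadic}}]$ is linear and bounded from ${\rm BMO}_{{\rm dyadic}}$ into $B({\mathcal M}^p_q,{\mathcal M}^s_t)$, so if $[a_k,I_{\alpha,{\rm dyadic}}]$ is compact whenever $a_k$ is a finite Haar combination and $a_k\to a$ in ${\rm BMO}_{{\rm dyadic}}$, then $[a,I_{\alpha,{\rm dyadic}}]$ is a norm limit of compact operators, hence compact. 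It thus suffices to treat $a=h^\eta_R$. Expanding $[h^\eta_R,I_{\alpha,{\rm dyadic}}]f=h^\eta_R\,I_{\alpha,{\rm dyadic}}f-I_{\alpha,{\rm dyadic}}(h^\eta_R f)$ in the Haar basis and splitting the dyadic cubes $Q$ according to whether $Q\subsetneq R$, $Q=R$, or $Q\supsetneq R$, one finds that the $Q\subsetneq R$ contributions cancel between the two terms; the $Q=R$ contributions depend only on the finitely many numbers $\langle f,h^\varepsilon_R\rangle$ $(\varepsilon\in E)$ and $m_R(f)$ and take values in ${\rm Span}(\{h^\varepsilon_R\}_{\varepsilon\in E}\cup\{\chi_R\})$; and the $Q\supsetneq R$ contributions collapse, using that $h^\varepsilon_Q$ is constant on $R$ when $Q\supsetneq R$, to the two rank-one pieces $h^\eta_R\,m_R(I_{\alpha,{\rm dyadic}}f)$ and $\langle f,h^\eta_R\rangle\,I_{\alpha,{\rm dyadic}}(|R|^{-1}\chi_R)$. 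All of these are bounded finite-rank operators from ${\mathcal M}^p_q$ to ${\mathcal M}^s_t$: the occurring linear functionals of $f$ are bounded on ${\mathcal M}^p_q$ by a block/H\"older estimate (for $f\mapsto m_R(I_{\alpha,{\rm dyadic}}f)$, pair against $\chi_R/|R|\in{\mathcal H}^{s'}_{t'}$ and invoke Theorem \ref{thm3}), and the output functions lie in ${\mathcal M}^s_t$ by Theorem \ref{thm3} applied to $|R|^{-1}\chi_R\in{\mathcal M}^p_q$. Hence $[h^\eta_R,I_{\alpha,{\rm dyadic}}]$ is compact.

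For the \emph{necessity} ($[a,I_{\alpha,{\rm dyadic}}]$ compact implies $a\in{\rm VMO}_{{\rm dyadic}}$) the engine is an elementary computation. If $Q$ and $\tilde Q$ are disjoint dyadic cubes with common dyadic parent $Q^*$, then the only Haar functions $h^\varepsilon_{Q'}$ that are non-constant on a set containing both $Q$ and $\tilde Q$ and non-orthogonal to $\chi_{\tilde Q}$ are those with $Q'\supseteq Q^*$; summing the resulting geometric series shows that $I_{\alpha,{\rm dyadic}}\chi_{\tilde Q}$ is \emph{constant} on $Q$, equal to $c_{n,\alpha}|\tilde Q|\,|Q^*|^{\frac{\alpha}{n}-1}$ with $c_{n,\alpha}=(2^n-1)\frac{2^{\alpha-n}}{1-2^{\alpha-n}}-1>0$, and likewise $I_{\alpha,{\rm dyadic}}(a\chi_{\tilde Q})|_Q=m_{\tilde Q}(a)\,I_{\alpha,{\rm dyadic}}\chi_{\tilde Q}|_Q$. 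Therefore $[a,I_{\alpha,{\rm dyadic}}]\chi_{\tilde Q}$ equals $c_{n,\alpha}|\tilde Q|\,|Q^*|^{\frac{\alpha}{n}-1}(a-m_{\tilde Q}(a))$ on $Q$, with no residual term. Taking $\tilde Q$ to be a sibling of $Q$ (so $|\tilde Q|=|Q|$ and $|Q^*|=2^n|Q|$), and combining the elementary inequality $\int_Q|a-c|\ge\frac12\int_Q|a-m_Q(a)|$ with the relation $\frac1s+\frac{\alpha}{n}=\frac1p$ and $\|\chi_{\tilde Q}\|_{{\mathcal M}^p_q}=|Q|^{\frac1p}$, one gets $\|\,[a,I_{\alpha,{\rm dyadic}}]g_Q\,\|_{{\mathcal M}^s_t}\gtrsim m_Q(|a-m_Q(a)|)$ for the normalized test function $g_Q:=|Q|^{-\frac1p}\chi_{\tilde Q}$, which has $\|g_Q\|_{{\mathcal M}^p_q}=1$.

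To finish I would record that $a\in{\rm VMO}_{{\rm dyadic}}$ if and only if the dyadic oscillation $m_Q(|a-m_Q(a)|)$ tends to $0$ as $Q$ runs to each of the three ``ends'': $|Q|\to0$, $|Q|\to\infty$, and $Q$ receding from the origin; this is proved by truncating the Haar expansion of $a$ to finitely many scales inside a large cube and estimating the ${\rm BMO}_{{\rm dyadic}}$ norm of the remainder. Assume now $a\notin{\rm VMO}_{{\rm dyadic}}$, so one of the three conditions fails and there are cubes $Q_k$ running to one of the ends with $m_{Q_k}(|a-m_{Q_k}(a)|)\ge\delta>0$; if the failing condition is the receding-cubes one, I would dispose of the large-cube case first, so that $|Q_k|$ may be assumed bounded and hence the siblings $\tilde Q_k$ also recede. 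In all cases $g_k:=|Q_k|^{-\frac1p}\chi_{\tilde Q_k}$ satisfies $\|g_k\|_{{\mathcal M}^p_q}=1$, $\|\,[a,I_{\alpha,{\rm dyadic}}]g_k\,\|_{{\mathcal M}^s_t}\gtrsim\delta$, and $g_k\to0$ in the weak-$*$ topology of ${\mathcal M}^p_q=({\mathcal H}^{p'}_{q'})^*$ (test against the compactly supported elements of ${\mathcal H}^{p'}_{q'}$, which are norm-dense). Since $I_{\alpha,{\rm dyadic}}$ is self-adjoint, $[a,I_{\alpha,{\rm dyadic}}]$ is skew-symmetric, and it is bounded from ${\mathcal H}^{s'}_{t'}$ to ${\mathcal H}^{p'}_{q'}$ by Theorem \ref{thm6}(ii); hence it is continuous for the weak-$*$ topologies, and a compact operator between dual Banach spaces that is weak-$*$-to-weak-$*$ continuous sends weak-$*$-null sequences to norm-null sequences. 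Thus $[a,I_{\alpha,{\rm dyadic}}]g_k\to0$ in ${\mathcal M}^s_t$, contradicting $\|[a,I_{\alpha,{\rm dyadic}}]g_k\|_{{\mathcal M}^s_t}\gtrsim\delta$. Hence $a\in{\rm VMO}_{{\rm dyadic}}$.

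The two steps I expect to demand genuine care are the ${\rm VMO}_{{\rm dyadic}}$ characterization above --- routine but fiddly, in particular the interplay between the large-scale and the spatial-infinity conditions --- and, more seriously, the passage from weak-$*$ convergence to norm convergence. This is precisely the delicate duality point the paper warns about, and it is here that Theorem \ref{thm6}(ii), rather than merely Theorem \ref{thm4}, is indispensable: ${\mathcal M}^p_q$ is a non-reflexive dual space, so compactness of $[a,I_{\alpha,{\rm dyadic}}]$ alone does not upgrade weak-$*$ convergence, and one genuinely needs the operator to act boundedly on the preduals (equivalently, to be weak-$*$-to-weak-$*$ continuous).
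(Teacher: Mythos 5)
Your proof is correct in substance, and it takes a genuinely different route from the paper's in both directions. For sufficiency, you give an explicit finite-rank decomposition of $[h^\eta_R,I_{\alpha,\rm dyadic}]$ (which does work: the $Q\subsetneq R$ terms vanish because $h^\eta_R$ is constant on each such $Q$, the $Q=R$ and $Q\supsetneq R$ terms collapse to finitely many rank-one pieces built from $\chi_R$, $h^\eta_R$, and $I_{\alpha,\rm dyadic}(|R|^{-1}\chi_R)$, all of which lie in $\cM^s_t$), then transfer compactness to the norm closure via the comparison $\|[a,I_{\alpha,\rm dyadic}]\|_{B(\cM^p_q,\cM^s_t)}\lesssim\|a\|_{\rm BMO_{dyadic}}$. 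The paper instead cites Theorem~\ref{thm5} together with the reference \cite{Sawano-Shirai-2008-GMJ}; your self-contained argument is more transparent. For necessity, you test against sibling characteristic functions $g_Q=|Q|^{-1/p}\chi_{\tilde Q}$, computing exactly that $[a,I_{\alpha,\rm dyadic}]\chi_{\tilde Q}|_Q=c_{n,\alpha}|\tilde Q|\,|Q^*|^{\alpha/n-1}(a-m_{\tilde Q}(a))$ with no error term (I checked the constant $c_{n,\alpha}=(2^n-1)\frac{2^{\alpha-n}}{1-2^{\alpha-n}}-1>0$ and the reduction $I_{\alpha,\rm dyadic}(a\chi_{\tilde Q})|_Q=m_{\tilde Q}(a)\,I_{\alpha,\rm dyadic}\chi_{\tilde Q}|_Q$); you then invoke a three-ends oscillation characterization of $\rm VMO_{dyadic}$ and argue by contradiction. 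The paper instead tests with normalized Haar functions $h^{\varepsilon''}_U/\|h^{\varepsilon''}_U\|_{\cM^p_q}$, truncates $a$ by scale via the operators $[a,I_{\alpha,\rm dyadic}]_{\ge L}$ and $[a,I_{\alpha,\rm dyadic}]_{\le -L}$, and adds a separate spatial-decay observation (equation (\ref{eq:45})). The two routes cover the same three regimes (small scale, large scale, receding position) but organize them differently: your approach is test-function/contradiction driven, the paper's is function-truncation driven.

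The one point both proofs must navigate carefully is the passage from weak-$*$ convergence to norm convergence, and you nail it: compactness alone does not upgrade weak-$*$-null to norm-null in a non-reflexive dual space, and the extra ingredient is weak-$*$-to-weak-$*$ continuity of the commutator, i.e.\ the fact that it is (up to a sign, by skew-symmetry of $[a,I_{\alpha,\rm dyadic}]$) the adjoint of its own action on the preduals, which is exactly what Theorem~\ref{thm6}(ii) supplies. The paper encodes this in Lemma~\ref{lem3.3} (whose statement as printed has a typo — $T^*f_j$ with $f_j\in X^*$ is ill-typed; the intended reading is precisely the adjoint-of-a-predual-operator statement you use). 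Two places where you should write out details: the bookkeeping when splitting the $Q=R$ contribution (one of the rank-one pieces you list under the $Q\supsetneq R$ heading, namely $\langle f,h^\eta_R\rangle I_{\alpha,\rm dyadic}(|R|^{-1}\chi_R)$, actually arises from $Q=R$ with $\varepsilon'=\eta$ — harmless, but the labels should match the decomposition), and the three-ends characterization of $\rm VMO_{dyadic}$: it is correct (the small-scale remainder is controlled by partitioning each test cube into $\cD_L$-children, the large-scale remainder is a conditional expectation hence an $L^1$-contraction of $a-m_U(a)$, and the spatial tail uses $|\langle a,h^\varepsilon_Q\rangle|\le|Q|^{1/2}m_Q(|a-m_Q(a)|)$), but as you say it is fiddly and deserves to be spelled out.
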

We remark that the \lq\lq if" part of Theorem \ref{thm7} 
is investigated in
\cite{Sawano-Shirai-2008-GMJ}.

All the theorems above are proved in Section 3
after collecting some auxiliary facts
in Section \ref{section:preliminaries}.

\section{Preliminaries}
\label{section:preliminaries}

Here we collect some preliminary facts.
For the proof of Proposition \ref{prop:2.1}
we refer to \cite[Chapter 2]{HW}.
\begin{proposition}
\label{prop:2.1}
Let $1<q<\infty$.
\begin{enumerate}
\item[$(i)$]
For $f \in L^q$, %%l^2
the following equivalence holds{\rm:}
\begin{equation}
\label{eq:100104-1}
\|f\|_{L^q}
\sim
\sum_{\varepsilon \in E}
\left\|
\left(\sum_{j=-\infty}^\infty\left|\sum_{Q \in {\mathcal D}_j}
\langle f,h^{\varepsilon}_Q \rangle h^{\varepsilon}_Q
\right|^2\right)^{1/2}
\right\|_{L^q}.
\end{equation}
\item[$(ii)$]
For $f \in L^q$ and $k \in {\mathbb Z}$,
the following equivalence holds{\rm:}
\begin{equation}
\label{eq:100104-2}
\|f\|_{L^q}
\sim
\sum_{\varepsilon \in E}
\left\|
\left(\sum_{j=k}^\infty
\left|\sum_{Q \in {\mathcal D}_j}
\langle f,h^{\varepsilon}_Q \rangle h^{\varepsilon}_Q\right|^2\right)^{1/2}
\right\|_{L^q}
+
\left\|\sum_{Q \in {\mathcal D}_k}
\langle f,\chi_Q \rangle \frac{\chi_Q}{|Q|}\right\|_{L^q}.
\end{equation}
\item[$(iii)$]
For $f \in L^{q,{\rm loc}}$ and $R \in {\mathcal D}$,
the following equivalence holds{\rm:}
\begin{equation}
\label{eq:100104-3}
\|f-m_R(f)\|_{L^q(R)}
\sim
\sum_{\varepsilon \in E}
\left\|
\left(\sum_{j=-\log_2\ell(R)}^{\infty}
\left|\sum_{Q \in {\mathcal D}_j, \, Q \subset R}
\langle f,h^{\varepsilon}_Q \rangle h^{\varepsilon}_Q\right|^2\right)^{1/2}
\right\|_{L^q}.
\end{equation}
\end{enumerate}
Here the implicit constant in {\rm (\ref{eq:100104-2})}
does not depend on $k$.
\end{proposition}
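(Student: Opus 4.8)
The common engine for all three equivalences is the unconditionality of the Haar system in $L^q$ for $1<q<\infty$, so the plan is to isolate that as the one substantial input and then derive $(i)$--$(iii)$ from it by soft arguments. First I would record that for every choice of signs $\sigma=(\sigma_{\varepsilon,Q})_{\varepsilon\in E,\,Q\in\mathcal D}\in\{-1,1\}$ the martingale transform
\[
T_\sigma f:=\sum_{\varepsilon\in E}\sum_{Q\in\mathcal D}\sigma_{\varepsilon,Q}\,\langle f,h^\varepsilon_Q\rangle\,h^\varepsilon_Q
\]
is bounded on $L^q(\mathbb R^n)$ with $\|T_\sigma\|_{L^q\to L^q}\le C_q$ independent of $\sigma$ (Burkholder's martingale-transform inequality for the dyadic filtration, see \cite[Chapter 2]{HW}); since $T_\sigma\circ T_\sigma=\mathrm{Id}$ this also gives the reverse bound $\|f\|_{L^q}\le C_q\|T_\sigma f\|_{L^q}$. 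I would note in the same breath that the identical statement holds for the inhomogeneous Haar system on a single dyadic cube, i.e.\ on the mean-zero subspace of $L^q(R)$; this is what part $(iii)$ will need.

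For $(i)$ I would then randomise the signs. With $(r_{\varepsilon,Q})$ independent Rademacher variables, Fubini and Khintchine's inequality applied fibrewise in $x$ give
\[
\mathbb E\,\|T_r f\|_{L^q}^q
=\int_{\mathbb R^n}\mathbb E\Bigl|\sum_{\varepsilon,Q}r_{\varepsilon,Q}\langle f,h^\varepsilon_Q\rangle h^\varepsilon_Q(x)\Bigr|^q dx
\sim_q\int_{\mathbb R^n}\Bigl(\sum_{\varepsilon,Q}\bigl|\langle f,h^\varepsilon_Q\rangle h^\varepsilon_Q(x)\bigr|^2\Bigr)^{q/2} dx ,
\]
and combining this with $\|f\|_{L^q}\sim_q\|T_r f\|_{L^q}$ (uniformly in $r$) yields $\|f\|_{L^q}\sim\bigl\|(\sum_{\varepsilon,Q}|\langle f,h^\varepsilon_Q\rangle h^\varepsilon_Q|^2)^{1/2}\bigr\|_{L^q}$. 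To finish I would observe that for fixed $j$ and fixed $x$ exactly one $Q\in\mathcal D_j$ contains $x$, so $\sum_{Q\in\mathcal D_j}\langle f,h^\varepsilon_Q\rangle h^\varepsilon_Q(x)$ is a single term and $\sum_{Q\in\mathcal D}|\langle f,h^\varepsilon_Q\rangle h^\varepsilon_Q(x)|^2=\sum_j|\sum_{Q\in\mathcal D_j}\langle f,h^\varepsilon_Q\rangle h^\varepsilon_Q(x)|^2$; pulling the finite sum over $\varepsilon\in E$ out of the square function costs only a dimensional constant, which gives \eqref{eq:100104-1}.

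For $(ii)$ I would use the martingale decomposition $f=E_kf+\sum_{j\ge k}\sum_{\varepsilon\in E}\sum_{Q\in\mathcal D_j}\langle f,h^\varepsilon_Q\rangle h^\varepsilon_Q$ with $E_kf:=\sum_{Q\in\mathcal D_k}\langle f,\chi_Q\rangle|Q|^{-1}\chi_Q$ (the series converges in $L^q$: $E_Mf\to f$ as $M\to+\infty$ and, because $1<q<\infty$, $E_mf\to0$ in $L^q(\mathbb R^n)$ as $m\to-\infty$). Running the argument of $(i)$ but with signs attached only to the Haar functions of scale $j\ge k$ — equivalently, restricting the martingale transform to the increments above level $k$ — gives
\[
\|f-E_kf\|_{L^q}\sim\sum_{\varepsilon\in E}\Bigl\|\Bigl(\sum_{j\ge k}\bigl|\textstyle\sum_{Q\in\mathcal D_j}\langle f,h^\varepsilon_Q\rangle h^\varepsilon_Q\bigr|^2\Bigr)^{1/2}\Bigr\|_{L^q},
\]
and the key point for the asserted $k$-independence is that Burkholder's and Khintchine's constants do not see which scales are kept. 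The triangle inequality, together with $\|E_kf\|_{L^q}\le\|f\|_{L^q}$ and $\|f-E_kf\|_{L^q}\le2\|f\|_{L^q}$, then upgrades this to the two-sided equivalence \eqref{eq:100104-2}. Part $(iii)$ I would deduce by the same mechanism after the affine change of variables sending $R\in\mathcal D_{j_0}$ ($j_0=-\log_2\ell(R)$) to $[0,1)^n$: apply the inhomogeneous-Haar version of $(i)$/$(ii)$ on $L^q([0,1)^n)$ to the transplanted function $(f-m_R(f))|_R$, using that $\langle f,h^\varepsilon_Q\rangle=\langle f-m_R(f),h^\varepsilon_Q\rangle$ for every $Q\subset R$ (as $h^\varepsilon_Q$ has vanishing mean) and that the resulting square function is automatically supported in $R$; rescaling back produces \eqref{eq:100104-3}.

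The only genuinely nontrivial ingredient is the uniform $L^q$-boundedness of the sign-changed Haar projections for $q\ne2$ (Burkholder's inequality / unconditionality of the Haar basis), so the hard part is not in the bookkeeping but in that single theorem, which I would simply cite from \cite[Chapter 2]{HW}; everything after it — the Rademacher averaging, the passage from $\sum_{Q\in\mathcal D}$ to $\sum_j|\sum_{Q\in\mathcal D_j}\cdot|^2$, the scale-uniformity in $(ii)$, and the rescaling in $(iii)$ — is routine.
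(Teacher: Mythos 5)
The paper omits a proof of Proposition \ref{prop:2.1} altogether: it states explicitly that this is the one result not proved in the paper and refers the reader to \cite[Chapter 2]{HW}. Your sketch is therefore not diverging from an in-paper argument but reconstructing the one the author has in mind, and what you propose is exactly the standard route: unconditionality of the Haar system in $L^q$ (Burkholder's martingale-transform inequality) as the single hard input, Khintchine averaging to pass to the square function, the martingale decomposition $f=E_kf+(f-E_kf)$ for (ii), and an affine rescaling to $[0,1)^n$ together with $\langle f,h^\varepsilon_Q\rangle=\langle f-m_R(f),h^\varepsilon_Q\rangle$ for (iii). The structure is sound and I do not see a gap.

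Two small places are worth tightening. First, the $k$-uniformity in (ii) does not really come from the slogan ``Burkholder's and Khintchine's constants do not see which scales are kept''; the cleaner reason is that $g:=f-E_kf$ satisfies $\langle g,h^\varepsilon_Q\rangle=0$ for every $Q$ of generation $<k$ (for such $Q$ the function $h^\varepsilon_Q$ is $\sigma(\mathcal D_k)$-measurable, so $\langle E_kf,h^\varepsilon_Q\rangle=\langle f,E_kh^\varepsilon_Q\rangle=\langle f,h^\varepsilon_Q\rangle$), hence the full square function of $g$ coincides with your truncated square function of $f$, and applying part (i) to $g$ gives the equivalence with an absolute constant. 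Second, for (iii) you only need the inhomogeneous form of (i) on the unit cube, so the reference to (ii) there is superfluous. You should also spell out why $E_mf\to0$ in $L^q$ as $m\to-\infty$, namely $|m_Q(f)|\le|Q|^{-1/q}\|f\|_{L^q}$ together with the pointwise domination of $E_mf$ by the dyadic maximal function (which lies in $L^q$) and dominated convergence; this, rather than Burkholder, is where $q<\infty$ enters to guarantee that the bi-infinite Haar expansion of $f$ converges in $L^q$.
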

A counterpart of Proposition \ref{prop:2.1}
for Herz spaces was proved in \cite{Izuki-Sawano-2009-JMAA}.
So, it seems possible to extend the results
to these spaces.

This is the only propositions
whose proof we omit in the present paper.

When $n=1$,
the next proposition is \cite[Theorem 2.6.]{Lacey-2005-Hokkaido}.
\begin{proposition}\label{prop:2.2}
Let $a \in {\rm BMO}_{{\rm dyadic}}$ and $1<q<\infty$.
Then
the following is an equivalent norm 
of $\|a\|_{{\rm BMO}_{\rm dyadic}}${\rm:}
\begin{equation*}
\sup
\left\{
\sum_{\varepsilon \in E}
\left\|
\sum_{j=-\infty}^\infty
\left(
\sum_{Q \in {\mathcal D}_j}
\langle f,\chi_Q \rangle 
\cdot
\langle a,h^{\varepsilon}_Q \rangle
\frac{h^{\varepsilon}_Q}{|Q|}
\right)
\right\|_{L^q}
\,:\,f \in L^q, \, \|f\|_{L^q}=1
\right\}.
\end{equation*}
\end{proposition}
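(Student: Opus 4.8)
The plan is to read the displayed quantity as $\sup\{\sum_{\varepsilon\in E}\|\pi^\varepsilon_a f\|_{L^q}\,:\,f\in L^q,\ \|f\|_{L^q}=1\}$, where $\pi^\varepsilon_a f$ is the $\varepsilon$-component of the dyadic paraproduct; using $\langle f,\chi_Q\rangle\tfrac{h^\varepsilon_Q}{|Q|}=m_Q(f)h^\varepsilon_Q$,
\[
\pi^\varepsilon_a f
:=\sum_{j=-\infty}^\infty\sum_{Q\in{\mathcal D}_j}m_Q(f)\,\langle a,h^\varepsilon_Q\rangle\,h^\varepsilon_Q
=\sum_{Q\in{\mathcal D}}m_Q(f)\,\langle a,h^\varepsilon_Q\rangle\,h^\varepsilon_Q .
\]
The goal is then the two-sided bound $\sum_{\varepsilon\in E}\|\pi^\varepsilon_a f\|_{L^q}\sim\|a\|_{{\rm BMO}_{{\rm dyadic}}}\|f\|_{L^q}$, after which taking the supremum over $f$ proves the proposition. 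The first step, common to both inequalities, is to pass to a weighted square function: since $\{h^\varepsilon_Q\}_{\varepsilon\in E,\,Q\in{\mathcal D}}$ is orthonormal one has $\langle\pi^\varepsilon_a f,h^{\varepsilon'}_{Q'}\rangle=\delta_{\varepsilon\varepsilon'}\,m_{Q'}(f)\,\langle a,h^\varepsilon_{Q'}\rangle$, so feeding $\pi^\varepsilon_a f$ into Proposition~\ref{prop:2.1}(i) (together with the completeness of the Haar system in $L^q$, which also legitimizes the convergence) and using that cubes of a fixed generation are pairwise disjoint gives
\[
\|\pi^\varepsilon_a f\|_{L^q}
\sim
\Bigl\|\Bigl(\,\sum_{Q\in{\mathcal D}}|m_Q(f)|^2\,|\langle a,h^\varepsilon_Q\rangle|^2\,\frac{\chi_Q}{|Q|}\Bigr)^{1/2}\Bigr\|_{L^q}.
\]

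For the upper bound it then suffices to show, for each fixed $\varepsilon\in E$, that the right-hand side is $\lesssim\|a\|_{{\rm BMO}_{{\rm dyadic}}}\|f\|_{L^q}$; this is the classical $L^q$-boundedness ($1<q<\infty$) of the dyadic paraproduct, in which the tensor structure plays no role, and it follows from the dyadic Carleson embedding theorem applied to the coefficient sequence $(|\langle a,h^\varepsilon_Q\rangle|^2)_{Q\in{\mathcal D}}$. The required Carleson measure estimate is a consequence of the fact that $\{h^\varepsilon_Q:Q\subset R,\ \varepsilon\in E\}\cup\{|R|^{-1/2}\chi_R\}$ is an orthonormal basis of $L^2(R)$ for every dyadic cube $R$, so that
\begin{align*}
\sum_{\varepsilon\in E}\sum_{Q\in{\mathcal D},\,Q\subset R}|\langle a,h^\varepsilon_Q\rangle|^2
&=\|(a-m_R(a))\chi_R\|_{L^2}^2
\le|R|\,m_R(|a-m_R(a)|^2)\\
&\lesssim|R|\,\|a\|_{{\rm BMO}_{{\rm dyadic}}}^2,
\end{align*}
the last step being the dyadic John--Nirenberg inequality (equivalently, Proposition~\ref{prop:2.1}(iii) with exponent $2$ applied to $f=a$). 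The uniformity of the square-function bound in turn yields the $L^q$-convergence of the series defining $\pi^\varepsilon_a f$, so that $\pi^\varepsilon_a$ is genuinely defined on $L^q$.

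For the lower bound I would test on $f=|R|^{-1/q}\chi_R$ for a dyadic cube $R$, which satisfies $\|f\|_{L^q}=1$. Since dyadic cubes are nested or disjoint, $m_Q(f)=0$ when $Q\cap R=\emptyset$, $m_Q(f)=|R|^{-1/q}$ when $Q\subset R$, and $h^\varepsilon_Q$ is constant on $R$ when $Q\supsetneq R$; hence on $R$ one has $\pi^\varepsilon_a f=|R|^{-1/q}\sum_{Q\subset R}\langle a,h^\varepsilon_Q\rangle h^\varepsilon_Q+c_{R,\varepsilon}\chi_R$ for a constant $c_{R,\varepsilon}$ (the tail $\sum_{Q\supsetneq R}$ converging absolutely), and summing over $\varepsilon$ and invoking the above basis of $L^2(R)$, $\sum_{\varepsilon\in E}\pi^\varepsilon_a f=|R|^{-1/q}(a-m_R(a))\chi_R+C_R\chi_R$ on $R$, where $C_R:=\sum_{\varepsilon\in E}c_{R,\varepsilon}$. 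Putting $b:=m_R(a)-|R|^{1/q}C_R$ and using the triangle inequality,
\begin{align*}
\sum_{\varepsilon\in E}\|\pi^\varepsilon_a f\|_{L^q}
&\ge\Bigl\|\sum_{\varepsilon\in E}\pi^\varepsilon_a f\Bigr\|_{L^q(R)}
=|R|^{-1/q}\|(a-b)\chi_R\|_{L^q}\\
&\ge\tfrac12\,|R|^{-1/q}\|(a-m_R(a))\chi_R\|_{L^q}
\ge\tfrac12\,m_R(|a-m_R(a)|),
\end{align*}
where the middle inequality uses that $m_R(a)$ is, up to a factor $2$, the best constant $L^q(R)$-approximation of $a$ (indeed $|b-m_R(a)|=|m_R(a-b)|\le|R|^{-1/q}\|(a-b)\chi_R\|_{L^q}$). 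Taking the supremum over all dyadic $R$ gives the lower bound.

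I expect the main obstacle to be the upper estimate, more precisely the $L^q$ (rather than merely $L^2$) form of the Carleson embedding theorem: if one does not wish to quote it, it is obtained from the $L^2$ Carleson lemma by interpolating with a weak-$(1,1)$ estimate for $q\le2$ and by a duality argument for $q>2$. The remaining points — the orthonormal basis property of the multidimensional Haar system, the dyadic John--Nirenberg inequality, and the absolute convergence of the tail $\sum_{Q\supsetneq R}$ to a constant on $R$ in the lower bound — are routine.
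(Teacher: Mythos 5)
Your proof is correct and follows the same outline the paper sketches: cube testing for the lower estimate and the Carleson embedding for the upper estimate. The only meaningful organizational difference is in how the $L^q$ case ($q\ne 2$) is reached. The paper's hint — ``an argument of $T1$-type as well as the Carleson embedding theorem when $p=2$,'' with a pointer to Stein p.~302 — suggests proving $L^2$-boundedness of the dyadic paraproduct via the $L^2$ Carleson lemma and then upgrading to $L^q$ by Calder\'on--Zygmund (weak-$(1,1)$ plus interpolation and duality). You instead pass through the weighted Haar square function via Proposition~\ref{prop:2.1}$(i)$ and then invoke the $L^q$ version of the dyadic Carleson embedding theorem directly, noting that the latter is itself obtained from the $L^2$ case by interpolation and duality. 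These are the same ingredients reassembled in a slightly different order, so I would call the routes essentially equivalent; yours makes the square-function reduction explicit, which dovetails nicely with the way the paper later uses this proposition over Morrey spaces.

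One small slip worth flagging: you describe the estimate $m_R(|a-m_R(a)|^2)\lesssim\|a\|_{{\rm BMO}_{\rm dyadic}}^2$ as ``equivalently, Proposition~\ref{prop:2.1}$(iii)$ with exponent $2$ applied to $f=a$.'' Proposition~\ref{prop:2.1}$(iii)$ at $q=2$ is (up to constants) just Plancherel for the local Haar basis — it gives $\|a-m_R(a)\|_{L^2(R)}^2\sim\sum_{\varepsilon}\sum_{Q\subset R}|\langle a,h^\varepsilon_Q\rangle|^2$, which is the \emph{first} step of your chain, not the $L^1\to L^2$ reverse H\"{o}lder step. The passage from the $L^1$-based ${\rm BMO}_{\rm dyadic}$ norm to the $L^2$ average genuinely needs the dyadic John--Nirenberg inequality, which you correctly invoke; it is just not ``equivalent'' to Proposition~\ref{prop:2.1}$(iii)$. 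This does not affect the validity of the argument.
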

This theorem is motivated by
the results due to Coifman and Meyer.
(See \cite{Coifman-Meyer-1975-TAMS,Coifman-Meyer-1978-Grenoble}.)
\begin{proof}
This is somehow well known \cite[Theorem 2.6.]{Lacey-2005-Hokkaido}.
The proof of
\[
\sup
\left\{
\sum_{\varepsilon \in E}
\left\|
\sum_{j=-\infty}^\infty
\left(
\sum_{Q \in {\mathcal D}_j}
\langle f,\chi_Q \rangle 
\cdot
\langle a,h^{\varepsilon}_Q \rangle
\frac{h^{\varepsilon}_Q}{|Q|}
\right)
\right\|_{L^q}
\,:\,f \in L^q, \, \|f\|_{L^q}=1
\right\}
\gtrsim 
\|a\|_{{\rm BMO}_{\rm dyadic}}
\]
can be proved by the cube testing and Proposition \ref{prop:2.1}.
For the reverse inequality,
we use an argument of $T1$-type
as well as the Carleson embedding theorem when $p=2$.
The situation resembles that 
in \cite[p.302 (64)]{St}.
\end{proof}

Here and below, for $k \in {\mathbb N} \cup \{0\}$
and $R \in {\mathcal D}$, 
we write $R_{+k}$
for the unique dyadic cube containing $R$ and 
of volume $2^{kn}|R|$.
\begin{equation}
\label{eq:dyadic-k}
|R_{+k}|=2^{kn}|R|, \, R \subset R_{+k}, \, R_{+k} \in {\mathcal D}.
\end{equation}

Finally before we prove Theorems \ref{thm1}--\ref{thm7},
we shall obtain a counterpart of Proposition \ref{prop:2.2}
for Morrey spaces.

\begin{proposition}
Let $1<q \le p<\infty$.
Then we have
\begin{align}\label{eq:22}
\sup_f
\sum_{\varepsilon \in E}
\left\|
\sum_{j=-\infty}^\infty
\left(
\sum_{Q \in {\mathcal D}_j}
\langle f,\chi_Q \rangle 
\cdot
\langle a,h^{\varepsilon}_Q \rangle
\frac{h^{\varepsilon}_Q}{|Q|}
\right)
\right\|_{{\mathcal M}^p_q}
\sim
\|a\|_{{\rm BMO}_{{\rm dyadic}}},
\end{align}
where the supremum is taken over all
$f \in{\mathcal M}^p_q$ such that $\|f\|_{{\mathcal M}^p_q}=1$.
\end{proposition}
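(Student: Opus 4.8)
The plan is to prove the two inequalities in \eqref{eq:22} separately, treating this as the Morrey-space analogue of Proposition \ref{prop:2.2}. For the lower bound $\gtrsim$, I would use cube testing exactly as in the $L^q$ case: fix a dyadic cube $R$ and test the paraproduct against $f = |R|^{\frac1p-\frac1q}\,|R|^{-1/q}\chi_R \cdot(\text{a suitable normalization})$, so that $\|f\|_{{\mathcal M}^p_q} \sim 1$. For such $f$, the coefficients $\langle f,\chi_Q\rangle$ are constant (up to normalization) on cubes $Q \subset R$ and vanish for $Q$ disjoint from $R$, so the paraproduct restricted to $R$ reduces to a multiple of $\sum_{\varepsilon}\sum_{j \ge -\log_2\ell(R)}\sum_{Q \subset R}\langle a,h^\varepsilon_Q\rangle h^\varepsilon_Q/|Q|$; after accounting for the geometric scaling one applies Proposition \ref{prop:2.1}(iii) to recognize this as $|R|^{-1}\|a-m_R(a)\|_{L^q(R)}$, and then the Morrey normalization $|R|^{\frac1p-\frac1q}$ together with the trivial estimate $\|a-m_R(a)\|_{L^q(R)} \gtrsim |R|^{1/q}m_R(|a-m_R(a)|)$ yields $\gtrsim \|a\|_{{\rm BMO}_{\rm dyadic}}$ after taking the supremum over $R$.

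For the upper bound $\lesssim$, the natural route is to bypass a direct Morrey estimate on the paraproduct and instead invoke Theorem \ref{thm2}, which is precisely the statement
\[
\sum_{\varepsilon \in E}
\left\|
\sum_{j=-\infty}^\infty
\sum_{Q \in {\mathcal D}_j}
\langle f,\chi_Q \rangle \,\langle a,h^\varepsilon_Q\rangle\, h^\varepsilon_Q
\right\|_{{\mathcal M}^p_q}
\lesssim \|a\|_{{\rm BMO}_{\rm dyadic}}\|f\|_{{\mathcal M}^p_q},
\]
but with $h^\varepsilon_Q$ in place of $h^\varepsilon_Q/|Q|$. The two differ only by a rescaling bookkeeping in the Haar coefficients: writing $h^\varepsilon_Q = |Q|^{-1/2}h^\varepsilon(\ell(Q)^{-1}(\cdot - x_Q))$ and $\langle f,\chi_Q\rangle$ in place of $m_Q(f)|Q|$, one checks that the expression in \eqref{eq:22} is, term by term, the same bilinear form as in Theorem \ref{thm2} up to replacing $\langle f,\chi_Q\rangle$ by $\langle f,\chi_Q\rangle/|Q| = m_Q(f)$; since $f \mapsto m_\cdot(f)$ and $f\mapsto \langle f,\chi_\cdot\rangle$ produce comparable paraproduct norms under the Morrey normalization (the standard argument replacing $\langle f,\chi_Q\rangle$ by averages $m_{Q}(f)$ and using that $\chi_Q/|Q|$ is an $L^1$-normalized bump), one concludes the upper bound directly from Theorem \ref{thm2}.

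The main obstacle I anticipate is the careful matching of normalizations between $h^\varepsilon_Q$ and $h^\varepsilon_Q/|Q|$ and between $\langle f,\chi_Q\rangle$ and $m_Q(f)$ in the Morrey setting: one must verify that the "model paraproduct" with averages and $L^1$-normalized Haar functions has the same Morrey boundedness as the one in Theorem \ref{thm2}, which requires either re-running the dyadic square-function argument behind Theorem \ref{thm1}(i) or invoking a Fefferman--Stein type vector-valued inequality on ${\mathcal M}^p_q$. Everything else — the cube testing for the lower bound and the reduction of the upper bound to Theorem \ref{thm2} — is routine once this normalization dictionary is fixed. I would therefore organize the proof by first stating the normalization identity as a short lemma, then deriving $\gtrsim$ via testing and Proposition \ref{prop:2.1}(iii), and finally deriving $\lesssim$ from Theorem \ref{thm2}.
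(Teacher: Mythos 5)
Your strategy for the lower bound is similar in spirit (cube testing plus Proposition~\ref{prop:2.1}~$(iii)$), but the paper's test function is a Haar function $f = c\,h^{\varepsilon_0}_R$, not a normalized indicator $c\,\chi_R$. This is not cosmetic: since $\langle h^{\varepsilon_0}_R,\chi_Q\rangle = 0$ for every dyadic $Q \supseteq R$, testing against a Haar function makes the contribution of cubes $Q \supsetneq R$ vanish identically, and one lands exactly on the inner square function of Proposition~\ref{prop:2.1}~$(iii)$. With $f = c\,\chi_R$ the cubes $Q \supsetneq R$ contribute a nonzero constant on $R$ whose magnitude is comparable to $\|a\|_{{\rm BMO}_{\rm dyadic}}$, and you would have to argue that this constant cannot cancel the main term; this can be done (the inner part has mean zero on $R$, so $\|{\rm inner}\|_{L^q(R)} \le 2\|{\rm inner}+c\|_{L^q(R)}$), but your write-up does not address it, and the paper's Haar test sidesteps the issue entirely.

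The upper bound is where the genuine gap lies. You propose to reduce to Theorem~\ref{thm2} via a ``normalization dictionary,'' claiming that the paraproduct in (\ref{eq:22}), namely $\sum_Q \langle f,\chi_Q\rangle\langle a,h^\varepsilon_Q\rangle h^\varepsilon_Q/|Q| = \sum_Q m_Q(f)\langle a,h^\varepsilon_Q\rangle h^\varepsilon_Q$, and the one in Theorem~\ref{thm2}, $\sum_Q \langle f,\chi_Q\rangle\langle a,h^\varepsilon_Q\rangle h^\varepsilon_Q = \sum_Q |Q|\,m_Q(f)\langle a,h^\varepsilon_Q\rangle h^\varepsilon_Q$, ``produce comparable paraproduct norms.'' They do not: the $Q$-th coefficients differ by the unbounded factor $|Q|$, and there is no substitution $f \mapsto g$ converting one operator into the other, so boundedness of one does not imply boundedness of the other. (If you instead read Theorem~\ref{thm2} as carrying a hidden $1/|Q|$, then the upper bound in (\ref{eq:22}) \emph{is} Theorem~\ref{thm2} and your reduction is vacuous.) The paper does not go through Theorem~\ref{thm2} at all. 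It fixes a dyadic cube $S$, splits $f = \chi_S f + \chi_{\R^n \setminus S} f$, applies the $L^q$ estimate of Proposition~\ref{prop:2.2} to the inner piece $\chi_S f$, and for the outer piece observes that the only cubes $Q$ contributing on $S$ are $Q = S_{+k}$ with $k \ge 1$, which one controls by the crude bound $|\langle a,h^\varepsilon_Q\rangle| \lesssim |Q|^{1/2}\|a\|_{{\rm BMO}_{\rm dyadic}}$ and the geometric decay $|S|/|S_{+k}| = 2^{-kn}$. You need some version of this inner/outer split; the normalization argument you sketch does not replace it.
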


\begin{proof}
Observe that by Proposition \ref{prop:2.1}
(see $(\ref{eq:100104-3})$) we have
\[
\|a\|_{{\rm BMO}_{{\rm dyadic}}}
\sim
\sup_{Q \in {\mathcal D}}
\left(
\frac{1}{|Q|}\int_Q 
\sum_{j=-\infty}^\infty
\left|\sum_{R \in {\mathcal D}_j, \, R \subset Q}
\langle a,h^\varepsilon_R \rangle h^\varepsilon_R(x)
\right|^{\frac{q}{2}}\,dx
\right)^{\frac{1}{q}}.
\]
Consequently
the inequality $\gtrsim$ in (\ref{eq:22}) follows
by considering $f=|Q|^{-1/p-1/2}h^\varepsilon_Q$
for a cube $Q$.

Let us prove 
the inequality $\lesssim$ in (\ref{eq:22}).
Let $S$ be a fixed dyadic cube.
Then we need to show that
\[
|S|^{\frac{1}{p}-\frac{1}{q}}
\left(
\int_S
\left|
\sum_{j=-\infty}^\infty
\left(
\sum_{Q \in {\mathcal D}_j}
\langle f,\chi_Q \rangle 
\cdot
\langle a,h^{\varepsilon}_Q \rangle
\frac{h^{\varepsilon}_Q(x)}{|Q|}
\right)
\right|^q\,dx
\right)^{\frac{1}{q}}
\lesssim
\|a\|_{{\rm BMO}_{{\rm dyadic}}}
\]
for all $f \in {\mathcal M}^p_q$ with norm $1$.

By Proposition \ref{prop:2.2},
we have
\begin{align*}
\lefteqn{
|S|^{\frac{1}{p}-\frac{1}{q}}
\left(
\int_S
\left|
\sum_{j=-\infty}^\infty
\left(
\sum_{Q \in {\mathcal D}_j}
\langle \chi_S f,\chi_Q \rangle 
\cdot
\langle a,h^{\varepsilon}_Q \rangle
\frac{h^{\varepsilon}_Q(x)}{|Q|}
\right)
\right|^q\,dx
\right)^{\frac{1}{q}}
}\\
&\le
|S|^{\frac{1}{p}-\frac{1}{q}}
\left(
\int_{{\mathbb R}^n}
\left|
\sum_{j=-\infty}^\infty
\left(
\sum_{Q \in {\mathcal D}_j}
\langle \chi_S f,\chi_Q \rangle 
\cdot
\langle a,h^{\varepsilon}_Q \rangle
\frac{h^{\varepsilon}_Q(x)}{|Q|}
\right)
\right|^q\,dx
\right)^{\frac{1}{q}}\\
&\lesssim
\|a\|_{{\rm BMO}_{{\rm dyadic}}}
|S|^{\frac{1}{p}-\frac{1}{q}}\|\chi_S f\|_{L^q}.
\end{align*}
By the definition of the Morrey norm $\|f\|_{{\mathcal M}^p_q}$
we have
\begin{align*}
|S|^{\frac{1}{p}-\frac{1}{q}}
\left(
\int_S
\left|
\sum_{j=-\infty}^\infty
\left(
\sum_{Q \in {\mathcal D}_j}
\langle \chi_S f,\chi_Q \rangle 
\cdot
\langle a,h^{\varepsilon}_Q \rangle
\frac{h^{\varepsilon}_Q(x)}{|Q|}
\right)
\right|^q\,dx
\right)^{\frac{1}{q}}
&\lesssim
\|a\|_{{\rm BMO}_{{\rm dyadic}}}.
\end{align*}
Meanwhile,
a geometric observation shows that
\begin{align}
\lefteqn{
|S|^{\frac{1}{p}-\frac{1}{q}}
\left(
\int_S
\left|
\sum_{j=-\infty}^\infty
\left(
\sum_{Q \in {\mathcal D}_j}
\langle \chi_{{\mathbb R}^n \setminus S} f,\chi_Q \rangle 
\cdot
\langle a,h^{\varepsilon}_Q \rangle
\frac{h^{\varepsilon}_Q(x)}{|Q|}
\right)
\right|^q\,dx
\right)^{\frac{1}{q}}
}\nonumber\\
\label{eq:2500}
&=|S|^{\frac{1}{p}-\frac{1}{q}}
\left(
\int_S
\left|
\sum_{k=0}^\infty
\left(
\langle \chi_{{\mathbb R}^n \setminus S} f,\chi_{S_{+k}} \rangle 
\cdot
\langle a,h^{\varepsilon}_{S_{+k}} \rangle
\frac{h^{\varepsilon}_{S_{+k}}(x)}{|{S_{+k}}|}
\right)
\right|^q\,dx
\right)^{\frac{1}{q}},
\end{align}
where $S_{+k}$ is given by (\ref{eq:dyadic-k})
with $R$ replaced with $S$.
With the definition 
of the Morrey norm (\ref{eq:Morrey}),
a crude estimate
\[
|\langle a,h^{\varepsilon}_Q \rangle| \le 
C|Q|^{1/2}\|a\|_{\rm BMO}
\]
and this observation $(\ref{eq:2500})$
in mind,
we obtain
\[
|S|^{\frac{1}{p}-\frac{1}{q}}
\left(
\int_S
\left|
\sum_{j=-\infty}^\infty
\left(
\sum_{Q \in {\mathcal D}_j}
\langle \chi_{{\mathbb R}^n \setminus S} f,\chi_Q \rangle 
\cdot
\langle a,h^{\varepsilon}_Q \rangle
\frac{h^{\varepsilon}_Q(x)}{|Q|}
\right)
\right|^q\,dx
\right)^{\frac{1}{q}}
\lesssim
\|a\|_{{\rm BMO}_{{\rm dyadic}}}.
\]
The inequality $\lesssim$ in (\ref{eq:22}) is proved
and 
the proof is therefore complete.
\end{proof}

\section{Proof of Theorems}

\subsection{Proof of Theorem \ref{thm1}}
\label{subsection:thm1}

We shall prove an auxiliary inequality
which is interesting of its own right.
\begin{lemma}\label{lem:1}
Let $1 \le q \le p<\infty$.
Let $f \in {\mathcal M}^p_q$.
Then we have equivalence
\begin{equation}
\|f\|_{{\mathcal M}^p_q}
\sim
\sum_{\varepsilon \in E}
\left\|\left(\sum_{j=-\infty}^\infty\left|\sum_{Q \in {\mathcal D_j}}
\langle f,h_Q^\varepsilon \rangle h_Q^{\varepsilon}\right|^2
\right)^{\frac12}\right\|_{{\mathcal M}^p_q}
+
\|f\|_{{\mathcal M}^p_1}.
\end{equation}
\end{lemma}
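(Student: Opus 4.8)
\textbf{Proof proposal for Lemma \ref{lem:1}.}

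The plan is to reduce the two-sided equivalence to Proposition \ref{prop:2.1}(ii) applied on each dyadic cube $S$, with the extra term $\|f\|_{{\mathcal M}^p_1}$ absorbing the low-frequency piece uniformly. First I would fix a dyadic cube $S=Q_{k0}$ (after translation/dilation) of side length $2^{-k}$, so that $S\in{\mathcal D}_k$, and aim to control the local $L^q$-average $|S|^{\frac1p-\frac1q}\|\chi_S f\|_{L^q}$. Decompose $f=\chi_S f+\chi_{{\mathbb R}^n\setminus S}f$. For the square-function side, the key point is that for $Q\in{\mathcal D}_j$ with $j\ge k$ one has either $Q\subset S$ or $Q\cap S=\emptyset$, so on $S$ the sum $\sum_{Q\in{\mathcal D}_j}\langle f,h^\varepsilon_Q\rangle h^\varepsilon_Q$ depends only on $\chi_S f$; only finitely many scales $j<k$ (indeed the single cubes $S_{+m}$, $m\ge0$) see the exterior.

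Next I would invoke Proposition \ref{prop:2.1}(ii) with this value of $k$ for the function $\chi_S f$ (or rather for $f$ restricted appropriately), obtaining
\[
\|\chi_S f\|_{L^q}
\sim
\sum_{\varepsilon\in E}
\left\|\left(\sum_{j=k}^\infty\left|\sum_{Q\in{\mathcal D}_j}\langle \chi_S f,h^\varepsilon_Q\rangle h^\varepsilon_Q\right|^2\right)^{1/2}\right\|_{L^q}
+\left\|\sum_{Q\in{\mathcal D}_k}\langle \chi_S f,\chi_Q\rangle\frac{\chi_Q}{|Q|}\right\|_{L^q},
\]
with constants independent of $k$. The first term on the right, after multiplying by $|S|^{\frac1p-\frac1q}$ and using $Q\subset S$ for the relevant cubes, is dominated by the Morrey norm of the square function of $f$ over $S$, hence by the first term on the right of the claimed equivalence. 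The second term is exactly an average of $|f|$ over the single cube $S$ (since $\langle\chi_S f,\chi_S\rangle\chi_S/|S|=m_S(\chi_S f)\chi_S$), so $|S|^{\frac1p-\frac1q}$ times it is bounded by $|S|^{\frac1p-1}\int_S|f|\le\|f\|_{{\mathcal M}^p_1}$. Taking the supremum over $S$ gives $\lesssim$. For the reverse inequality $\gtrsim$: the square-function term is $\lesssim\|f\|_{{\mathcal M}^p_q}$ because on each $S$ the square function of $f$ restricted to scales $j\ge-\log_2\ell(S)$ and cubes inside $S$ is comparable to $\|f-m_S(f)\|_{L^q(S)}\lesssim\|f\|_{L^q(S)}$ (Proposition \ref{prop:2.1}(iii)), while the finitely many coarser scales contributing on $S$ are handled by the crude bound $|\langle f,h^\varepsilon_{S_{+m}}\rangle|\lesssim|S_{+m}|^{-1/2}\int_{S_{+m}}|f|$ together with a geometric series in $m$, exactly as in the proof of the preceding Proposition; and $\|f\|_{{\mathcal M}^p_1}\le\|f\|_{{\mathcal M}^p_q}$ by Hölder.

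The main obstacle is the bookkeeping at the scales $j<k=-\log_2\ell(S)$: on $S$ these contribute only through the nested tower $S=S_{+0}\subset S_{+1}\subset\cdots$, and one must show that $|S|^{\frac1p-\frac1q}\big\|\sum_{m\ge0}\langle f,h^\varepsilon_{S_{+m}}\rangle h^\varepsilon_{S_{+m}}\big\|_{L^q(S)}$ is controlled by $\|f\|_{{\mathcal M}^p_1}$ plus the accepted terms. Using $|h^\varepsilon_{S_{+m}}|\sim|S_{+m}|^{-1/2}$ on $S$ and $|\langle f,h^\varepsilon_{S_{+m}}\rangle|\lesssim|S_{+m}|^{-1/2}\int_{S_{+m}}|f|\le|S_{+m}|^{1-\frac1p}\|f\|_{{\mathcal M}^p_1}$, each term is $\lesssim|S_{+m}|^{-\frac1p}\|f\|_{{\mathcal M}^p_1}=2^{-mn/p}|S|^{-1/p}\|f\|_{{\mathcal M}^p_1}$, and since $p<\infty$ the geometric series in $m$ converges; multiplying by $|S|^{\frac1p-\frac1q}|S|^{1/q}=|S|^{1/p}$ yields the bound $\lesssim\|f\|_{{\mathcal M}^p_1}$, uniformly in $S$. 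This also reconciles the appearance of the ${\mathcal M}^p_1$-term, which is precisely what is needed to upgrade the scale-truncated ($j\ge k$) Littlewood--Paley equivalence of Proposition \ref{prop:2.1}(ii), uniform in $k$, to the full bilateral sum over all $j\in{\mathbb Z}$ on Morrey spaces.
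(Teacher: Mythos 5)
Your proof is correct and follows essentially the same strategy as the paper: localize to a dyadic cube $S$, separate the scales $j\ge k=-\log_2\ell(S)$ (handled by the local Littlewood--Paley equivalences of Proposition \ref{prop:2.1}) from the coarse scales $j<k$, and control the coarse tower $\{S_{+m}\}$ by the crude estimate $|\langle f,h^\varepsilon_{S_{+m}}\rangle h^\varepsilon_{S_{+m}}|\lesssim|S_{+m}|^{-1/p}\|f\|_{{\mathcal M}^p_1}$ summed geometrically. The only (harmless) variant is that for the $\lesssim$ direction you invoke Proposition \ref{prop:2.1}(ii) applied to $\chi_S f$, which packages the mean term automatically, whereas the paper subtracts $m_R(f)$ and uses Proposition \ref{prop:2.1}(iii); both are the same computation.
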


\begin{proof}
Let $R \in {\mathcal D}$ be fixed throughout the proof.

By virtue of a crude estimate and the H\"{o}lder inequality
\begin{equation}
|\langle f,h_{R_{+m}}^\varepsilon \rangle 
h_{R_{+m}}^{\varepsilon}|
\le
|R_{+m}|^{\frac1p-1}\|f\|_{{\mathcal M}^p_1}
\end{equation}
we obtain
\begin{equation}
\label{eq:100107-1}
|R|^{\frac1p-\frac1q}
\left\{
\int_R\left(\sum_{j=-\infty}^\infty
\left|\sum_{Q \in {\mathcal D_j}, \, Q \supset R}
\langle f,h_Q^\varepsilon 
\rangle h_Q^{\varepsilon}\right|^2
\right)^{\frac{q}2}
\right\}^{\frac1q}
\lesssim \|f\|_{{\mathcal M}^p_1}\le\|f\|_{{\mathcal M}^p_q}.
\end{equation}

Keeping in mind $(\ref{eq:100107-1})$,
let us first prove that
\begin{align}
\label{eq:13}
\|f\|_{{\mathcal M}^p_q}
\gtrsim
\sum_{\varepsilon \in E}
\left\|\left(\sum_{j=-\infty}^\infty\left|\sum_{Q \in {\mathcal D_j}}
\langle f,h_Q^\varepsilon \rangle h_Q^{\varepsilon}\right|^2
\right)^{\frac12}\right\|_{{\mathcal M}^p_q}
+
\|f\|_{{\mathcal M}^p_1}.
\end{align}
By Proposition \ref{prop:2.1} $(i)$ and (\ref{eq:100107-1})
we have
\begin{align*}
\lefteqn{
|R|^{\frac1p-\frac1q}
\left\{
\int_R\left(\sum_{j=-\infty}^\infty\left|\sum_{Q \in {\mathcal D_j}}
\langle f,h_Q^\varepsilon \rangle h_Q^{\varepsilon}\right|^2
\right)^{\frac{q}2}
\right\}^{\frac1q}
}\\
&\lesssim
|R|^{\frac1p-\frac1q}
\left(
\int_R|f(x)|^q\,dx
\right)^{\frac1q}
+
|R|^{\frac1p-\frac1q}
\left\{
\int_R\left(\sum_{j=-\infty}^\infty\left|\sum_{Q \in {\mathcal D_j}}
\langle \chi_{{\mathbb R}^n \setminus R}f,
h_Q^\varepsilon \rangle h_Q^{\varepsilon}\right|^2
\right)^{\frac{q}2}
\right\}^{\frac1q}\\
&\lesssim
\|f\|_{{\mathcal M}^p_q}.
\end{align*}
Thus, (\ref{eq:13}) is established.

Now let us prove the converse inequality
of (\ref{eq:13}).
First by the triangle inequality 
and the definition of the Morrey norm $(\ref{eq:Morrey})$,
we have
\[
|R|^{\frac1p-\frac1q}
\left(
\int_R|f(x)|^q\,dx
\right)^{\frac1q}
\le
|R|^{\frac1p-\frac1q}
\left(
\int_R|f(x)-m_R(f)|^q\,dx
\right)^{\frac1q}
+\|f\|_{{\mathcal M}^p_1}.
\]
By Proposition \ref{prop:2.1} $(iii)$
we obtain
\begin{align*}
\lefteqn{
|R|^{\frac1p-\frac1q}
\left(
\int_R|f(x)|^q\,dx
\right)^{\frac1q}
}\\
&\lesssim
|R|^{\frac1p-\frac1q}
\left(
\int_R
\left(\sum_{j=-\infty}^\infty
\left| \sum_{Q \in {\mathcal D}_j}
\langle h^{\varepsilon}_Q f \rangle
h^{\varepsilon}_Q(x)-m_R
\left[\sum_{Q \in {\mathcal D}_j}
\langle h^{\varepsilon}_Q f \rangle
h^{\varepsilon}_Q\right]
\right|^2\right)^{\frac{q}2}\,dx
\right)^{\frac1q}
+\|f\|_{{\mathcal M}^p_1}\\
&=
|R|^{\frac1p-\frac1q}
\left(
\int_R
\left(\sum_{j=-\infty}^\infty
\left| \sum_{Q \in {\mathcal D}_j, \, Q\subset R}
\langle h^{\varepsilon}_Q f \rangle
h^{\varepsilon}_Q(x)
\right|^2\right)^{\frac{q}2}\,dx
\right)^{\frac1q}
+\|f\|_{{\mathcal M}^p_1}.
\end{align*}
If we use (\ref{eq:100107-1}) again,
then we have
\begin{align*}
\lefteqn{
|R|^{\frac1p-\frac1q}
\left(
\int_R\left|\sum_{j=-\infty}^\infty \sum_{Q \in {\mathcal D}_j, \,
Q \subset R}
\langle h^{\varepsilon}_Q f \rangle
h^{\varepsilon}_Q(x)\right|^q\,dx
\right)^{\frac1q}
}\\
&\lesssim
|R|^{\frac1p-\frac1q}
\left(
\int_R\left|\sum_{j=-\infty}^\infty \sum_{Q \in {\mathcal D}_j}
\langle h^{\varepsilon}_Q f \rangle
h^{\varepsilon}_Q(x)\right|^q\,dx
\right)^{\frac1q}
+\|f\|_{{\mathcal M}^p_1}
\end{align*}
Thus, the proof of Lemma \ref{lem:1} is complete.
\end{proof}
Let us now prove Theorem \ref{thm1}.
In view of Lemma \ref{lem:1},
for the proof of $(i)$
it suffices to establish
\begin{equation}\label{eq:27}
|R|^{\frac1p}m_R(|f|)
=
|R|^{\frac1p-1}\int_R|f(x)|\,dx
\lesssim
\sum_{\varepsilon \in E}
\left\|\left(\sum_{j=-\infty}^\infty\left|\sum_{Q \in {\mathcal D_j}}
\langle f,h_Q^\varepsilon \rangle h_Q^{\varepsilon}\right|^2
\right)^{\frac12}\right\|_{{\mathcal M}^p_q}.
\end{equation}
By the triangle inequality,
we have
\begin{align*}
|R|^{\frac1p}m_R(|f|)
&=
\lim_{k \to \infty}
|R|^{\frac1p-1}\int_R|f(x)-m_{R_{+k}}(f)|\,dx\\
&\lesssim
\sum_{k=0}^\infty
2^{-k\left(\frac1p-1\right)}|R_{+k}|^{\frac1p-1}
\int_{R_{+k}}|f(x)-m_{R_{+k}}(f)|\,dx
\end{align*}
We calculate, by using Proposition \ref{prop:2.1}
and the fact that $p>1$,
\begin{align*}
|R|^{\frac1p}m_R(|f|)
&\lesssim
\sum_{\varepsilon \in E}
\sum_{k=0}^\infty
2^{-k\left(\frac1p-1\right)}
|R_{+k}|^{\frac1p-\frac1q}
\left(
\int_{R_{+k}}\left|\sum_{j=-\infty}^\infty \sum_{Q \in {\mathcal D}_j}
\langle h^{\varepsilon}_Q f \rangle
h^{\varepsilon}_Q(x)\right|^q\,dx
\right)^{\frac1q}\\
&\lesssim
\sum_{\varepsilon \in E}
\left\|\left(\sum_{j=-\infty}^\infty\left|\sum_{Q \in {\mathcal D_j}}
\langle f,h_Q^\varepsilon \rangle h_Q^{\varepsilon}\right|^2
\right)^{\frac12}\right\|_{{\mathcal M}^p_q}.
\end{align*}
As a conseqeunce (\ref{eq:27}) is proved.

Therefore, the proof of $(i)$ is complete.

For the proof of $(ii)$
we fix a compact set $K \subset {\mathbb R}^n$
and prove that
\begin{equation}
\label{eq:100107-5}
\lim_{M \to \infty}
\sum_{\varepsilon \in E}
\left\|\sum_{j=-\infty}^\infty
\sum_{Q \in {\mathcal D}_j}
\langle f,h^{\varepsilon}_Q \rangle h^{\varepsilon}_Q
-\sum_{j=-M}^M
\sum_{Q \in {\mathcal D}_j}
\langle f,h^{\varepsilon}_Q \rangle h^{\varepsilon}_Q
\right\|_{L^q(K)}=0.
\end{equation}
However, since $K$ can be covered by $3^n$ dyadic cubes
of the same size,
we have only to prove (\ref{eq:100107-5})
with $K$ replaced by a dyadic cube $R \in {\mathcal D}_k$,
where $k \in {\mathbb Z}$ is a fixed integer.
Let us denote
\begin{equation}
f^\varepsilon_j:=\sum_{Q \in {\mathcal D}_j}
\langle f,h^{\varepsilon}_Q \rangle h^{\varepsilon}_Q
\end{equation}
for $\varepsilon \in E$ and $j \in {\mathbb Z}$.
If $x \in R$ and $M \ge 1+|k|$,
then we have
\begin{align*}
\sum_{\varepsilon \in E}
\left|\sum_{j=-\infty}^\infty
f^\varepsilon_j(x)
-\sum_{j=-M}^M
f^\varepsilon_j(x)
\right|
&=
\sum_{\varepsilon \in E}
\left|\sum_{j=M+1}^\infty
f^\varepsilon_j(x)
+
\sum_{j=-\infty}^{-M-1}
f^\varepsilon_j(x)
\right|\\
&\le
\sum_{\varepsilon \in E}
\left|\sum_{j=M+1}^\infty
\sum_{\substack{Q \in {\mathcal D}_j \\ Q \subset R}}
\langle f,h^{\varepsilon}_Q \rangle h^{\varepsilon}_Q(x)
\right|
+
\sum_{\varepsilon \in E}
\left|
\sum_{j=-\infty}^{-M-1}
f^\varepsilon_j(x)
\right|.
\end{align*}
Recall that $R \in {\mathcal D}_k$.
Consequently,
we can write
\begin{equation}\label{eq:330}
f^\varepsilon_j(x)
=\sum_{Q \in {\mathcal D}_j}
\langle f,h^{\varepsilon}_Q \rangle h^{\varepsilon}_Q(x)
=\langle f,h^{\varepsilon}_{R_{+(-j+k)}} \rangle 
h^{\varepsilon}_{R_{+(-j+k)}}(x)
\quad (x \in R_k),
\end{equation}
if $j$ is negative enough, that is, $j \le -M-1<-|k|$.
If we use $(\ref{eq:330})$,
then we obtain
\begin{align*}
\lefteqn{
\sum_{\varepsilon \in E}
\left|\sum_{j=-\infty}^\infty
f^\varepsilon_j(x)
-\sum_{j=-M}^M
f^\varepsilon_j(x)
\right|
}\\
&\le
\sum_{\varepsilon \in E}
\left|\sum_{j=M+1}^\infty
\sum_{Q \in {\mathcal D}_j, \, Q \subset R}
\langle f,h^{\varepsilon}_Q \rangle h^{\varepsilon}_Q(x)
\right|
+
\sum_{\varepsilon \in E}
\sum_{m=M+k+1}^\infty 
|\langle f,h^{\varepsilon}_{R_{+m}} \rangle 
h^{\varepsilon}_{R_{+m}}(x)|.
\end{align*}
Thus,
by the triangle inequality, we have
\begin{align*}
\lefteqn{
\left\|\sum_{j=-\infty}^\infty
f^\varepsilon_j
-\sum_{j=-M}^M
f^\varepsilon_j
\right\|_{L^q(R)}
}\\
&\le
\left\|\sum_{j=M+1}^\infty
\sum_{Q \in {\mathcal D}_j, \, Q \subset R}
\langle f,h^{\varepsilon}_Q \rangle h^{\varepsilon}_Q
\right\|_{L^q(R)}
+
\sum_{m=M+k+1}^\infty 
\left\|\langle f,h^{\varepsilon}_{R_{+m}} \rangle 
h^{\varepsilon}_{R_{+m}}
\right\|_{L^q(R)}.
\end{align*}
A geometric observation shows that
\begin{align*}
\lefteqn{
\left\|\langle f,h^{\varepsilon}_{R_{+m}} \rangle 
h^{\varepsilon}_{R_{+m}}
\right\|_{L^q(R)}
}\\
&=2^{-mn/q}
\left\|\langle f,h^{\varepsilon}_{R_{+m}} \rangle 
h^{\varepsilon}_{R_{+m}}
\right\|_{L^q(R_{+m})}\\
&=2^{-mn/p+kn(1/p-1/q)}
|R_{+m}|^{1/p-1/q}
\left\|\langle f,h^{\varepsilon}_{R_{+m}} \rangle 
h^{\varepsilon}_{R_{+m}}
\right\|_{L^q(R_{+m})}.
\end{align*}
If we use this equality,
then we have
\begin{align*}
\lefteqn{
\left\|\sum_{j=-\infty}^\infty
f^\varepsilon_j
-\sum_{j=-M}^M
f^\varepsilon_j
\right\|_{L^q(R)}
}\\
&=
\left\|\sum_{j=M+1}^\infty
\sum_{Q \in {\mathcal D}_j, \, Q \subset R}
\langle f,h^{\varepsilon}_Q \rangle h^{\varepsilon}_Q
\right\|_{L^q(R)}
+
\sum_{m=M+k+1}^\infty 
2^{-mn/q}
\left\|\langle f,h^{\varepsilon}_{R_{+m}} \rangle 
h^{\varepsilon}_{R_{+m}}
\right\|_{L^q(R_{+m})}\\
&\lesssim
\left\|\left(\sum_{j=M+1}^\infty|f^\varepsilon_j|^2
\right)^{\frac12}\right\|_{L^q(R)}
+2^{-nM/p-nk/q}
\left\|\left(
\sum_{j=-\infty}^\infty|f^\varepsilon_j|^2
\right)^{\frac12}\right\|_{{\mathcal M}^p_q}.
\end{align*}
Thus, we obtain (\ref{eq:100107-5}),
which shows that (\ref{eq:thm1-3}) holds
in the topology of $L^{q,{\rm loc}}$.
As a consequence Theorem \ref{thm1} is proved completely.

\begin{remark}
It may be interesting to compare 
Theorem \ref{thm1} and Lemma \ref{lem:1}
with the result in \cite[Theorem 1.3]{SaTa2}.
In \cite[Theorem 1.3]{SaTa2},
we have proved that
\[
\|f\|_{{\mathcal M}^p_q}
\sim
\|M^\sharp f\|_{{\mathcal M}^p_q}
+
\|f\|_{{\mathcal M}^p_1}\,(1<q \le p<\infty).
\]
Here $M^\sharp$ denotes the sharp maximal operator
due to Fefferman, Stein and Stromberg.
\end{remark}

\subsection{Proof of Theorem \ref{thm2}}
\label{subsection:thm2}

Let $\varepsilon \in E$ be fixed.
We also take a dyadic cube $R$.
Then it suffices from Theorem \ref{thm1}
\begin{align*}
{\rm I}
&:=
|R|^{\frac1p-\frac1q}
\left\{
\int_R
\left(
\sum_{j=-\log_2\ell(R)}^\infty
\left|
\sum_{Q \in {\mathcal D}_j, \, Q \subset R}
\langle f,\chi_Q \rangle 
\cdot
\langle a,h^{\varepsilon}_Q \rangle
h^{\varepsilon}_Q(x)
\right|^2
\right)^{\frac{q}{2}}\,dx
\right\}^{\frac1q}
\end{align*}
by $C\|a\|_{{\rm BMO}_{dyadic}}\|f\|_{{\mathcal M}^p_q}$
with constants independent of $R$, $a$ and $f$.
By using Proposition \ref{prop:2.2}
we obtain
\begin{align*}
{\rm I}
&=
|R|^{\frac1p-\frac1q}
\left\{
\int_R
\left(
\sum_{j=-\log_2\ell(R)}^\infty
\left|
\sum_{Q \in {\mathcal D}_j, \, Q \subset R}
\langle \chi_R f,\chi_Q \rangle 
\cdot
\langle a,h^{\varepsilon}_Q \rangle
h^{\varepsilon}_Q(x)
\right|^2
\right)^{\frac{q}{2}}\,dx
\right\}^{\frac1q}.
\end{align*}
Note 
that $\{Q\,:\,Q \in {\mathcal D}_j\}$ partitions ${\mathbb R}^n$.
Consequently,
we have
\begin{align*}
{\rm I}
&\lesssim \|a\|_{{\rm BMO}_{{\rm dyadic}}}
|R|^{\frac1p-\frac1q}
\left\{
\int_R
\left(
\sum_{j=-\log_2\ell(R)}^\infty
\left|
\sum_{Q \in {\mathcal D}_j, \, Q \subset R}
\langle \chi_R f,\chi_Q \rangle 
|Q|^{\frac12}
h^{\varepsilon}_Q(x)
\right|^2
\right)^{\frac{q}{2}}\,dx
\right\}^{\frac1q}
\end{align*}
from the definition of $\|a\|_{{\rm BMO}_{{\rm dyadic}}}$.
If we use the definition of the Morrey norm (\ref{eq:Morrey})
crudely,
then we have
\begin{align*}
{\rm I}
\lesssim \|a\|_{{\rm BMO}_{{\rm dyadic}}}
|R|^{\frac1p-\frac1q}
\left(\int_R |f(x)|^q\,dx\right)^{\frac1q}
\lesssim \|a\|_{{\rm BMO}_{{\rm dyadic}}}
\|f\|_{{\mathcal M}^p_q}.
\end{align*}
Therefore, since $R$ is arbitrary,
the proof of Theorem \ref{thm2} is complete.

\subsection{Proof of Theorem \ref{thm3}}
\label{subsection:thm3}

We shall make use of the following estimate
in the proof of Theorem \ref{thm4} as well as Theorem \ref{thm3}.
Actually Proposition \ref{prop:1} is a little stronger
than Theorem \ref{thm3}.
\begin{proposition}
\label{prop:1}
Let $0<\alpha<n$, $1<q \le p<\infty$ and $1<t \le s<\infty$.
Assume 
\begin{equation}
\frac{1}{s}=\frac{1}{p}-\frac{\alpha}{n}, \quad
\frac{t}{s}=\frac{q}{p}.
\end{equation}
Then
\begin{equation}
\|I_{\alpha,{\rm dyadic}}f\|_{{\mathcal M}^s_t}
\lesssim
\left\|
\sum_{j=-\infty}^\infty 
\left|
\sum_{Q \in {\mathcal D}_j}
|Q|^{\frac{\alpha}{n}}
\langle f,h^{\varepsilon}_Q \rangle h^{\varepsilon}_Q
\right|\,
\right\|_{{\mathcal M}^s_t}
\lesssim
\|f\|_{{\mathcal M}^p_q}.
\end{equation}
\end{proposition}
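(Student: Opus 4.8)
The plan is to prove the two displayed inequalities separately, the second being the analytic core. Write $g^{\varepsilon}_j:=\sum_{Q\in\mathcal D_j}|Q|^{\alpha/n}\langle f,h^{\varepsilon}_Q\rangle h^{\varepsilon}_Q$, so that the middle quantity is (the sum over $\varepsilon\in E$ of) $\bigl\|\sum_j|g^{\varepsilon}_j|\bigr\|_{\mathcal M^s_t}$. Granting the bound $\sum_{\varepsilon\in E}\bigl\|\sum_j|g^{\varepsilon}_j|\bigr\|_{\mathcal M^s_t}\lesssim\|f\|_{\mathcal M^p_q}$, the function $\sum_{\varepsilon\in E}\sum_j|g^{\varepsilon}_j|$ lies in $\mathcal M^s_t\subset L^{t,{\rm loc}}$; hence the partial sums $\sum_{\varepsilon}\sum_{|j|\le M}g^{\varepsilon}_j$ are dominated on every compact set by an $L^t$ function, so they converge both a.e.\ and in $L^{t,{\rm loc}}$. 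This justifies reading $I_{\alpha,{\rm dyadic}}f$ as a genuine locally integrable function, whose Littlewood--Paley pieces are, by orthonormality of the Haar system, exactly the $g^{\varepsilon}_j$. Theorem~\ref{thm1} then gives $\|I_{\alpha,{\rm dyadic}}f\|_{\mathcal M^s_t}\sim\sum_{\varepsilon\in E}\bigl\|(\sum_j|g^{\varepsilon}_j|^2)^{1/2}\bigr\|_{\mathcal M^s_t}\le\sum_{\varepsilon\in E}\bigl\|\sum_j|g^{\varepsilon}_j|\bigr\|_{\mathcal M^s_t}$, which is the first inequality; only $\ell^2\hookrightarrow\ell^1$ is used here.

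For the second inequality, fix $\varepsilon$ and a point $x$. Exactly one cube $Q_j(x)\in\mathcal D_j$ contains $x$, and since $|h^{\varepsilon}_Q|=|Q|^{-1/2}$ on $Q$ one gets the pointwise domination
\[
|g^{\varepsilon}_j(x)|\le|Q_j(x)|^{\alpha/n}\,m_{Q_j(x)}(|f|).
\]
Now sum a geometric series in Hedberg's style: for any integer $N$, the terms with $j\ge N$ are each at most $|Q_j(x)|^{\alpha/n}M_{{\rm dyadic}}f(x)=2^{-j\alpha}M_{{\rm dyadic}}f(x)$, where $M_{{\rm dyadic}}$ is the dyadic maximal operator, and so sum to $\lesssim 2^{-N\alpha}M_{{\rm dyadic}}f(x)$ (using $\alpha>0$); the terms with $j<N$ obey $m_{Q_j(x)}(|f|)\le|Q_j(x)|^{-1/p}\|f\|_{\mathcal M^p_q}$, hence are each at most $|Q_j(x)|^{\alpha/n-1/p}\|f\|_{\mathcal M^p_q}=2^{jn/s}\|f\|_{\mathcal M^p_q}$ (using $\alpha/n-1/p=-1/s$) and sum to $\lesssim 2^{Nn/s}\|f\|_{\mathcal M^p_q}$ (using $n/s>0$). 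Choosing $N$ with $2^N$ comparable to $(M_{{\rm dyadic}}f(x)/\|f\|_{\mathcal M^p_q})^{1/(\alpha+n/s)}$ and noting $\alpha+n/s=n/p$, we obtain the Hedberg bound
\[
\sum_{j=-\infty}^{\infty}|g^{\varepsilon}_j(x)|\lesssim\bigl(M_{{\rm dyadic}}f(x)\bigr)^{p/s}\,\|f\|_{\mathcal M^p_q}^{\,1-p/s}.
\]

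It remains to take $\mathcal M^s_t$ norms. From the last display, $\bigl\|\sum_j|g^{\varepsilon}_j|\bigr\|_{\mathcal M^s_t}\lesssim\|f\|_{\mathcal M^p_q}^{1-p/s}\bigl\|(M_{{\rm dyadic}}f)^{p/s}\bigr\|_{\mathcal M^s_t}$, and the identities $t=sq/p$ (so $(p/s)\,t=q$) and $(1/p-1/q)(p/s)=1/s-1/t$ give $\bigl\|(M_{{\rm dyadic}}f)^{p/s}\bigr\|_{\mathcal M^s_t}=\|M_{{\rm dyadic}}f\|_{\mathcal M^p_q}^{p/s}$. Finally $M_{{\rm dyadic}}$ is bounded on $\mathcal M^p_q$ for $1<q\le p<\infty$: for a fixed dyadic cube $R$, split $f=f\chi_R+f\chi_{\mathbb R^n\setminus R}$, use $L^q$-boundedness on $f\chi_R$, and for $Q\supsetneq R$ bound $m_Q(|f|)\le|Q|^{-1/p}\|f\|_{\mathcal M^p_q}\le|R|^{-1/p}\|f\|_{\mathcal M^p_q}$; this yields $|R|^{1/p-1/q}\|M_{{\rm dyadic}}f\|_{L^q(R)}\lesssim\|f\|_{\mathcal M^p_q}$. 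Hence $\bigl\|\sum_j|g^{\varepsilon}_j|\bigr\|_{\mathcal M^s_t}\lesssim\|f\|_{\mathcal M^p_q}$, and summing over the finitely many $\varepsilon\in E$ completes the proof.

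The step I expect to be the main obstacle is the passage from the pointwise Hedberg bound to the $\mathcal M^s_t$ estimate: it requires getting the exponent bookkeeping exactly right (that $1/s=1/p-\alpha/n$ and $t/s=q/p$ force the interpolation exponent to be $p/s$, that $(p/s)t=q$, and that $\alpha+n/s=n/p$), and it rests on the boundedness of the dyadic maximal operator on $\mathcal M^p_q$, which although elementary must be in place. A secondary point to handle carefully is convergence: the pointwise bound already shows $\sum_{\varepsilon,j}|g^{\varepsilon}_j(x)|<\infty$ wherever $M_{{\rm dyadic}}f(x)<\infty$, i.e.\ a.e., and this is what legitimises treating $I_{\alpha,{\rm dyadic}}f$ as a locally integrable function before invoking Theorem~\ref{thm1}.
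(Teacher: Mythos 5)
Your proof is correct and is, at its core, the same Hedberg-type optimisation that the paper uses: the identical pointwise bound $|g^{\varepsilon}_j(x)|\le|Q_j(x)|^{\alpha/n}m_{Q_j(x)}(|f|)\le 2^{jn/s}\|f\|_{\mathcal M^p_q}$, the same split of the $j$-sum into a geometrically decaying tail and a geometrically growing head, and the same optimisation producing the exponent $p/s$. The one place you diverge is the choice of majorant for the high-frequency half: you dominate $m_{Q_j(x)}(|f|)$ by the dyadic maximal function $M_{\rm dyadic}f$ and then close by proving Morrey boundedness of $M_{\rm dyadic}$, whereas the paper dominates $\bigl|\sum_{Q\in\mathcal D_j}\langle f,h^{\varepsilon}_Q\rangle h^{\varepsilon}_Q\bigr|$ by $\sup_{l\in\mathbb Z}\bigl|\sum_{Q\in\mathcal D_l}\langle f,h^{\varepsilon}_Q\rangle h^{\varepsilon}_Q\bigr|$ and controls the Morrey norm of that supremum by the square function, which Theorem~\ref{thm1} has already bounded. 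Both work; the paper's choice stays entirely inside the wavelet machinery already established, while yours trades that for an elementary but separate maximal-operator lemma. You also make explicit two things the paper leaves tacit: that the a.e.\ and $L^{t,\mathrm{loc}}$ convergence of $\sum_{\varepsilon,j}g^{\varepsilon}_j$ follows from the absolute pointwise bound, and that the first displayed inequality is Theorem~\ref{thm1} plus $\ell^2\hookrightarrow\ell^1$. The exponent bookkeeping ($\alpha+n/s=n/p$, $(p/s)t=q$, $(1/p-1/q)(p/s)=1/s-1/t$) is all correct.
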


\begin{proof}
The proof is simple:
Let $x \in {\mathbb R}^n$ and $j \in {\mathbb Z}$ be fixed
and choose $Q_0 \in {\mathcal D}_j$ so that $x \in Q_0$.
If we use a simple inequality
\begin{equation}
\left|\sum_{Q \in {\mathcal D}_j}
|Q|^{\frac{\alpha}{n}}\langle f,h_Q^{\varepsilon} \rangle 
h_Q^{\varepsilon}(x)\right|
\le
\ell(Q_0)^{\alpha}m_{Q_0}(|f|)
\le
\ell(Q_0)^{\alpha-\frac{n}{p}}\|f\|_{{\mathcal M}^p_q}
=
2^{-j\left(\alpha-\frac{n}{p}\right)}\|f\|_{{\mathcal M}^p_q},
\end{equation}
we have
\begin{align*}
\sum_{j=-\infty}^\infty\left|
\sum_{Q \in {\mathcal D}_j}
|Q|^{\frac{\alpha}{n}}\langle f,h_Q^{\varepsilon} \rangle h_Q^{\varepsilon}
\right|
&\le
\sum_{j=-\infty}^\infty
2^{-j\alpha}
\min\left(\left|\sum_{Q \in {\mathcal D}_j}
\langle f,h_Q^{\varepsilon} \rangle h_Q^{\varepsilon}\right|,
2^{\frac{jn}{p}}\|f\|_{{\mathcal M}^p_q}\right)\\
&\le
\sum_{j=-\infty}^\infty
2^{-j\alpha}
\min\left(\sup_{l \in {\mathbb Z}}\left|\sum_{Q \in {\mathcal D}_l}
\langle f,h_Q^{\varepsilon} \rangle h_Q^{\varepsilon}\right|,
2^{\frac{jn}{p}}\|f\|_{{\mathcal M}^p_q}\right)\\
&\lesssim
\|f\|_{{\mathcal M}^p_q}^{1-\frac{p}{s}}
\sup_{l \in {\mathbb Z}}
\left|\sum_{Q \in {\mathcal D}_l}
\langle f,h_Q^{\varepsilon} \rangle h_Q^{\varepsilon}
\right|^{\frac{p}{s}}.
\end{align*}
If we use this pointwise estimate,
then we obtain the desired estimate.
\end{proof}

\begin{remark}
It may be interesting compare Proposition \ref{prop:1}
with the following result.
Let $\varphi \in {\mathcal S}$ be chosen
so that 
$\varphi(\xi)=1$ if $2 \le |\xi| \le 4$
and that
$\varphi(\xi)=0$ if $|\xi| \le 1$ or if $|\xi| \ge 8$.
Then in \cite{Sawano-Sugano-Tanaka-2009-Proc},
we have established
\begin{equation}\label{eq:100816-1}
\left\|\sum_{j=-\infty}^\infty
|2^{j\alpha}{\mathcal F}^{-1}[\varphi(2^{-j}\cdot){\mathcal F}f]|
\right\|_{{\mathcal M}^s_t}
\lesssim
\|f\|_{{\mathcal M}^p_q}.
\end{equation}
Estimate {\rm (\ref{eq:100816-1})} admits an extension
to Triebel-Lizorkin-Morrey spaces as we did
in \cite{Sawano-Sugano-Tanaka-2009-Proc}.
\end{remark}

\subsection{Proof of Theorem \ref{thm4}}
\label{subsection:thm4}

We freeze $\varepsilon \in E$ for a while.
By definition of $I_{\alpha,{\rm dyadic}}f$,
we obtain
\begin{align*}
\lefteqn{
\sum_{j=-\infty}^\infty \sum_{Q \in {\mathcal D}_j}
\langle a,h^{\varepsilon}_Q \rangle 
\{h^{\varepsilon}_Q(x) I_{\alpha,{\rm dyadic}}f(x)
-
I_{\alpha,{\rm dyadic}}[h^{\varepsilon}_Q f](x)\}
}\\
&=
\sum_{j=-\infty}^\infty \sum_{Q \in {\mathcal D}_j}
\sum_{\varepsilon' \in E}
\sum_{l=-\infty}^\infty\sum_{R \in {\mathcal D}_l}
\langle a,h^{\varepsilon}_Q \rangle
\langle f,h^{\varepsilon'}_R \rangle
\{h^{\varepsilon}_Q(x) I_{\alpha,{\rm dyadic}}h^{\varepsilon'}_R(x)
-
I_{\alpha,{\rm dyadic}}[h^{\varepsilon}_Q h^{\varepsilon'}_R](x)\}.
\end{align*}
Observe that, if $Q \supsetneq R$,
then
$h^{\varepsilon}_Q h^{\varepsilon'}_R=
m_R(h^{\varepsilon}_Q) h^{\varepsilon'}_R$
and hence
\begin{equation}
h^{\varepsilon}_Q(x) I_{\alpha,{\rm dyadic}}h^{\varepsilon'}_R(x)
=
I_{\alpha,{\rm dyadic}}[h^{\varepsilon}_Q h^{\varepsilon'}_R](x).
\end{equation}
Therefore,
we have
\begin{align*}
\lefteqn{
\sum_{j=-\infty}^\infty \sum_{Q \in {\mathcal D}_j}
\langle a,h^{\varepsilon}_Q \rangle 
\{
h^{\varepsilon}_Q(x) I_{\alpha,{\rm dyadic}}f(x)
-
I_{\alpha,{\rm dyadic}}[h^{\varepsilon}_Q f](x)
\}
}\\
&=
\sum_{j=-\infty}^\infty \sum_{Q \in {\mathcal D}_j}
\sum_{\varepsilon' \in E}
\sum_{l=-\infty}^j
\sum_{R \in {\mathcal D}_l}
\langle a,h^{\varepsilon}_Q \rangle
\langle f,h^{\varepsilon'}_R \rangle
\{
h^{\varepsilon}_Q(x) I_{\alpha,{\rm dyadic}}h^{\varepsilon'}_R(x)
-
I_{\alpha,{\rm dyadic}}[h^{\varepsilon}_Q h^{\varepsilon'}_R](x)
\}.
\end{align*}
If $Q=R$ and $\varepsilon \ne \varepsilon'$,
then we obtain
\begin{equation}
h^{\varepsilon}_Q(x) I_{\alpha,{\rm dyadic}}h^{\varepsilon'}_R(x)
=
I_{\alpha,{\rm dyadic}}[h^{\varepsilon}_Q h^{\varepsilon'}_R](x).
\end{equation}
Let us write
\begin{align*}
{\rm I}_1(x)
&:=
\sum_{j=-\infty}^\infty \sum_{Q \in {\mathcal D}_j}
\sum_{\varepsilon' \in E}
\sum_{l=-\infty}^{j-1}
\sum_{R \in {\mathcal D}_l}
|R|^{\frac{\alpha}{n}}
\langle a,h^{\varepsilon}_Q \rangle
\langle f,h^{\varepsilon'}_R \rangle
h^{\varepsilon}_Q(x)h^{\varepsilon'}_R(x),\\
{\rm I}_2(x)
&:=
\sum_{j=-\infty}^\infty \sum_{Q \in {\mathcal D}_j}
\sum_{\varepsilon' \in E}
\sum_{l=-\infty}^{j-1}
\sum_{R \in {\mathcal D}_l}
|Q|^{\frac{\alpha}{n}}
\langle a,h^{\varepsilon}_Q \rangle
\langle f,h^{\varepsilon'}_R \rangle
h^{\varepsilon}_Q(x)h^{\varepsilon'}_R(x),\\
{\rm II}(x)
&:=
\sum_{j=-\infty}^\infty \sum_{Q \in {\mathcal D}_j}
\langle a,h^{\varepsilon}_Q \rangle
\langle f,h^{\varepsilon}_Q \rangle
|Q|^{\frac{\alpha}{n}}|h^{\varepsilon}_Q(x)|^2,\\
{\rm III}(x)
&:=
\sum_{j=-\infty}^\infty \sum_{Q \in {\mathcal D}_j}
\langle a,h^{\varepsilon}_Q \rangle
\langle f,h^{\varepsilon}_Q \rangle
I_{\alpha,{\rm dyadic}}[|h^{\varepsilon}_Q|^2](x).
\end{align*}
Note that both ${\rm I}_1$ and ${\rm III}$ have
another expression:
\begin{align*}
{\rm I}_1(x)
&=
\sum_{j=-\infty}^\infty
\left(
\sum_{Q \in {\mathcal D}_j}
\langle I_{\alpha,{\rm dyadic}}f,\chi_Q \rangle 
\cdot
\langle a,h^{\varepsilon}_Q \rangle
\frac{h^{\varepsilon}_Q}{|Q|}
\right),\\
{\rm III}(x)
&=
I_{\alpha,{\rm dyadic}}
\left[
\sum_{j=-\infty}^\infty \sum_{Q \in {\mathcal D}_j}
\langle a,h^{\varepsilon}_Q \rangle
\langle f,h^{\varepsilon}_Q \rangle|h^{\varepsilon}_Q|^2
\right](x).
\end{align*}
Hence,
we have
\begin{align*}
\lefteqn{
\sum_{j=-\infty}^\infty \sum_{Q \in {\mathcal D}_j}
\langle a,h^{\varepsilon}_Q \rangle 
(h^{\varepsilon}_Q(x) I_{\alpha,{\rm dyadic}}f(x)
-
I_{\alpha,{\rm dyadic}}[h^{\varepsilon}_Q f](x))
}\\
&=
\sum_{j=-\infty}^\infty \sum_{Q \in {\mathcal D}_j}
\sum_{\varepsilon' \in E}
\sum_{l=-\infty}^{j-1}
\sum_{R \in {\mathcal D}_l}
\langle a,h^{\varepsilon}_Q \rangle
\langle f,h^{\varepsilon'}_R \rangle
\{
h^{\varepsilon}_Q(x) I_{\alpha,{\rm dyadic}}h^{\varepsilon'}_R(x)
-
I_{\alpha,{\rm dyadic}}[h^{\varepsilon}_Q h^{\varepsilon'}_R](x)
\}
\\
&\quad+
\sum_{j=-\infty}^\infty \sum_{Q \in {\mathcal D}_j}
\langle a,h^{\varepsilon}_Q \rangle
\langle f,h^{\varepsilon}_Q \rangle
\{
h^{\varepsilon}_Q(x) I_{\alpha,{\rm dyadic}}h^{\varepsilon}_Q(x)
-
I_{\alpha,{\rm dyadic}}[h^{\varepsilon}_Q h^{\varepsilon}_Q](x)
\}\\
&=
{\rm I}_1(x)-{\rm I}_2(x)+{\rm II}(x)-{\rm III}(x).
\end{align*}

Let us start with dealing with ${\rm I}_1$.
If we invoke again Theorem \ref{thm2},
then we have
\begin{align*}
\|{\rm I}_1\|_{{\mathcal M}^s_t}
\lesssim
\|a\|_{{\rm BMO}_{{\rm dyadic}}}
\|I_{\alpha,{\rm dyadic}}f\|_{{\mathcal M}^s_t}
\lesssim
\|a\|_{{\rm BMO}_{{\rm dyadic}}}
\|f\|_{{\mathcal M}^p_q}.
\end{align*}
Since
\begin{align*}
{\rm I}_2(x)
&=
\sum_{\varepsilon' \in E}
\sum_{j=-\infty}^\infty 
\sum_{l=-\infty}^{j-1}
\sum_{R \in {\mathcal D}_l}
\sum_{Q \in {\mathcal D}_j, \, Q \subsetneq R}
|R|^{\frac{\alpha}{n}}
\langle a,h^{\varepsilon}_Q \rangle
\langle f,h^{\varepsilon'}_R \rangle
h^{\varepsilon}_Q(x)h^{\varepsilon'}_R(x)\\
&=
\sum_{j=-\infty}^\infty 
\sum_{Q \in {\mathcal D}_j}
\langle a,h^{\varepsilon}_Q \rangle
\langle I_{\alpha,{\rm dyadic}}f,\chi_Q \rangle
h^{\varepsilon}_Q(x),
\end{align*}
we have by Theorem \ref{thm2}
\begin{align*}
\|{\rm I}_2\|_{{\mathcal M}^s_t}
&=
\left\|
\sum_{j=-\infty}^\infty 
\sum_{Q \in {\mathcal D}_j}
\langle a,h^{\varepsilon}_Q \rangle
\langle I_{\alpha,{\rm dyadic}}f,\chi_Q \rangle
h^{\varepsilon}_Q
\right\|_{{\mathcal M}^s_t}
\lesssim
\|a\|_{{\rm BMO}_{{\rm dyadic}}}
\|I_{\alpha,{\rm dyadic}}f\|_{{\mathcal M}^s_t}
\end{align*}
If we invoke Theorem \ref{thm3},
then we have
\begin{align*}
\|{\rm I}_2\|_{{\mathcal M}^s_t}
\lesssim
\|a\|_{{\rm BMO}_{{\rm dyadic}}}\|f\|_{{\mathcal M}^p_q}.
\end{align*}

By Proposition \ref{prop:1}
we obtain
\begin{align*}
\|{\rm II}\|_{{\mathcal M}^s_t}
&=
\left\|\sum_{j=-\infty}^\infty 
\left|\sum_{Q \in {\mathcal D}_j}|Q|^{\frac{\alpha}{n}}
\langle a,h^{\varepsilon}_Q \rangle
\langle f,h^{\varepsilon}_Q \rangle
h^{\varepsilon}_Q \cdot h^{\varepsilon}_Q
\right|\,\right\|_{{\mathcal M}^s_t}\\
&\le
\|a\|_{{\rm BMO}_{{\rm dyadic}}}
\left\|\sum_{j=-\infty}^\infty 
\left|\sum_{Q \in {\mathcal D}_j}
|Q|^{\frac{\alpha}{n}}\langle f,h^{\varepsilon}_Q \rangle
h^{\varepsilon}_Q
\right|\,\right\|_{{\mathcal M}^s_t}\\
&\lesssim
\|a\|_{{\rm BMO}_{{\rm dyadic}}}\|f\|_{{\mathcal M}^p_q}.
\end{align*}
Next,
by Proposition \ref{prop:1} and equality
$\displaystyle
I_{\alpha,{\rm dyadic}}[h^{\varepsilon}_Q h^{\varepsilon}_Q](x)
=
c_{\alpha}|Q|^{\frac{\alpha}{n}-1}\chi_Q(x),
$
we have
\begin{align*}
\|{\rm III}\|_{{\mathcal M}^s_t}
\lesssim
\left\|
\sum_{j=-\infty}^\infty \sum_{Q \in {\mathcal D}_j}
|\langle a,h^{\varepsilon}_Q \rangle
\langle f,h^{\varepsilon}_Q \rangle
|Q|^{\frac{\alpha}{n}-1}|\chi_Q
\right\|_{{\mathcal M}^s_t}
\lesssim
\|a\|_{{\rm BMO}_{{\rm dyadic}}}\|f\|_{{\mathcal M}^p_q}.
\end{align*}
Thus, the proof of Theorem \ref{thm4} is complete.

\subsection{Proof of Theorem \ref{thm5}}
\label{subsection:thm5}

Let $U \in {\mathcal D}$ be fixed.
Then we have
\begin{align*}
[a,I_{\alpha,{\rm dyadic}}]h^{\varepsilon''}_U(x)
&=
\sum_{\varepsilon \in E}
\sum_{j=-\infty}^\infty 
\sum_{Q \in {\mathcal D}_j, \, Q \subsetneq U}
(|Q|^{\frac{\alpha}{n}}-|U|^{\frac{\alpha}{n}})
\langle a,h^{\varepsilon}_Q \rangle
h^{\varepsilon}_Q(x)h^{\varepsilon''}_U(x)\\
&\quad+
\langle a,h^{\varepsilon''}_U \rangle
\langle f,h^{\varepsilon''}_U \rangle
(|U|^{\frac{\alpha}{n}}h^{\varepsilon''}_U(x)h^{\varepsilon''}_U(x)
-
I_{\alpha,{\rm dyadic}}[h^{\varepsilon''}_Q h^{\varepsilon''}_U](x)).
\end{align*}
By virtue of the non-homogeneous wavelet expansion
(see (\ref{eq:100104-2})),
we obtain
\begin{align*}
\|\,[a,I_{\alpha,{\rm dyadic}}]\,\|_{B({\mathcal M}^s_t,{\mathcal M}^p_q)}
&\ge
\|\,[a,I_{\alpha,{\rm dyadic}}]h^{\varepsilon''}_U\,\|_{{\mathcal M}^s_t}
|U|^{-\frac1p+\frac{1}{2}}\\
&\gtrsim
\left\|
\sum_{j=-\infty}^\infty 
\sum_{Q \in {\mathcal D}_j, \, Q \subsetneq U}
(|Q|^{\frac{\alpha}{n}}-|U|^{\frac{\alpha}{n}})
\langle a,h^{\varepsilon}_Q \rangle
h^{\varepsilon}_Q 
\right\|_{{\mathcal M}^s_t}|U|^{-\frac1p}.
\end{align*}
By Theorem \ref{thm1}
we have
\begin{align*}
\|\,[a,I_{\alpha,{\rm dyadic}}]\,\|_{B({\mathcal M}^s_t,{\mathcal M}^p_q)}
&\gtrsim
|U|^{\frac{\alpha}{n}-\frac1p}
\left\|
\sum_{j=-\infty}^\infty 
\sum_{Q \in {\mathcal D}_j, \, Q \subsetneq U}
\langle a,h^{\varepsilon}_Q \rangle
h^{\varepsilon}_Q h^{\varepsilon''}_U
\right\|_{{\mathcal M}^s_t}\\
&=
\frac{1}{|U|^{\frac{1}{s}}}
\left\|
\sum_{j=-\infty}^\infty 
\sum_{Q \in {\mathcal D}_j, \, Q \subsetneq U}
\langle a,h^{\varepsilon}_Q \rangle
h^{\varepsilon}_Q 
\right\|_{{\mathcal M}^s_t}\\
&\ge
\frac{1}{|U|^{\frac1t}}
\left(\int_U
\left|
\sum_{j=-\infty}^\infty 
\sum_{Q \in {\mathcal D}_j, \, Q \subsetneq U}
\langle a,h^{\varepsilon}_Q \rangle
h^{\varepsilon}_Q(x) \right|^t\,dx\right)^{\frac1t}.
\end{align*}
By the H\"{o}lder inequalty
we have 
\begin{align*}
\frac{1}{|U|}
\int_U\left|
\sum_{j=-\infty}^\infty 
\sum_{Q \in {\mathcal D}_j, \, Q \subsetneq U}
\langle a,h^{\varepsilon}_Q \rangle
h^{\varepsilon}_Q(x) \right|\,dx
\lesssim
\|\,[a,I_{\alpha,{\rm dyadic}}]\,\|_{B({\mathcal M}^s_t,{\mathcal M}^p_q)}.
\end{align*}
Therefore,
we obtain
\begin{equation}
\label{eq:5.11}
m_U\left(
\left|
\sum_{j=-\infty}^\infty 
\sum_{Q \in {\mathcal D}_j, \, Q \subsetneq U}
\langle a,h^{\varepsilon}_Q \rangle
h^{\varepsilon}_Q\right|\right)
\lesssim
\frac{
\|\,[a,I_{\alpha,{\rm dyadic}}]h^{\varepsilon''}_U\,\|_{{\mathcal M}^s_t}
}{\|h^{\varepsilon''}_U\|_{{\mathcal M}^p_q}}
\end{equation}
for all $U \in {\mathcal D}$.
Denote by $U^*$ the dyadic parent of $U$,
that is, the smallest dyadic cube engulfing $U$.
With $U$ replaced by $U^*$ above,
we obtain 
\begin{equation}
\label{eq:5.12}
m_U\left(
\left|
\sum_{j=-\infty}^\infty 
\sum_{Q \in {\mathcal D}_j, \, Q \subset U}
\langle a,h^{\varepsilon}_Q \rangle
h^{\varepsilon}_Q\right|\right)
\lesssim
\frac{
\|\,[a,I_{\alpha,{\rm dyadic}}]h^{\varepsilon''}_U\,\|_{{\mathcal M}^s_t}
}{\|h^{\varepsilon''}_U\|_{{\mathcal M}^p_q}}.
\end{equation}
It follows from the definition of the operator norm
$\|\,[a,I_{\alpha,{\rm dyadic}}]\,\|_{{\mathcal M}^p_q \to {\mathcal M}^s_t}$
that
\begin{equation}
\label{eq:5.13}
m_U\left(
\left|
\sum_{j=-\infty}^\infty 
\sum_{Q \in {\mathcal D}_j, \, Q \subset U}
\langle a,h^{\varepsilon}_Q \rangle
h^{\varepsilon}_Q\right|\right)
\lesssim
\|\,[a,I_{\alpha,{\rm dyadic}}]\,\|_{{\mathcal M}^p_q \to {\mathcal M}^s_t}.
\end{equation}
If we take the supremum over $U \in {\mathcal D}$
in (\ref{eq:5.13}),
then we obtain
\begin{align*}
\|a\|_{{\rm BMO}_{{\rm dyadic}}}
&\lesssim
\|\,[a,I_{\alpha,{\rm dyadic}}]\,\|_{B({\mathcal M}^s_t,{\mathcal M}^p_q)}.
\end{align*}
Thus, the proof is complete.

\subsection{Proof of Theorem \ref{thm6}}
\label{subsection:thm6}

By Theorem \ref{thm1} $(iii)$,
we see that $\{h^\varepsilon_Q\}_{Q \in {\mathcal D},\varepsilon \in E}$
is dense in ${\mathcal H}^{r_0'}_{r'}$.
Thus, to check $(i)$ and $(ii)$,
we need only to prove
\[
\left\|I_{\alpha,{\rm dyadic}}f\right\|_{{\mathcal H}^{p_0'}_{p'}}
+
\left\|[a,I_{\alpha,{\rm dyadic}}]f\right\|_{{\mathcal H}^{p_0'}_{p'}}
\lesssim
\|f\|_{{\mathcal H}^{r_0'}_{r'}}
\]
for all 
$f \in {\rm Span}(\{h^\varepsilon_Q\}_{Q \in {\mathcal D},\varepsilon \in E})$.
If we assume 
$f \in {\rm Span}(\{h^\varepsilon_Q\}_{Q \in {\mathcal D},\varepsilon \in E})$,
then from the definition we have
\begin{equation}\label{eq:100818-1}
I_{\alpha,{\rm dyadic}}f,[a,I_{\alpha,{\rm dyadic}}]f
\in{\mathcal H}^{p_0'}_{p'}.
\end{equation}
Observe that (\ref{eq:100818-1}) counts
in that we can obtain (\ref{eq:100818-1})
only by using the discrete fractional integral operators.
Consequently, if we invoke Theorem \ref{thm1},
we obtain
\begin{align*}
\left\|I_{\alpha,{\rm dyadic}}f\right\|_{{\mathcal H}^{p_0'}_{p'}}
&=
\sup_{g \in {\mathcal M}^{p_0}_{p} \setminus \{0\}}
\frac{1}{\|g\|_{{\mathcal M}^{p_0}_{p}}}
\left|\int_{\R^n}g(x)I_{\alpha,{\rm dyadic}}f(x)\,dx\right|\\
&=
\sup_{g \in {\mathcal M}^{p_0}_{p} \setminus \{0\}}
\frac{1}{\|g\|_{{\mathcal M}^{p_0}_{p}}}
\left|\int_{\R^n}I_{\alpha,{\rm dyadic}}g(x)f(x)\,dx\right|\\
&\le
\sup_{g \in {\mathcal M}^{p_0}_{p} \setminus \{0\}}
\frac{1}{\|g\|_{{\mathcal M}^{p_0}_{p}}}
\|I_{\alpha,{\rm dyadic}}g\|_{{\mathcal M}^{r_0}_{r}}
\|f\|_{{\mathcal H}^{r_0'}_{r'}}\\
&\le
\|I_{\alpha,{\rm dyadic}}\|_{{\mathcal M}^{p_0}_{p} \to {\mathcal M}^{r_0}_r}
\|f\|_{{\mathcal H}^{r_0'}_{r'}}.
\end{align*}
Thus, the proof is now complete.

\begin{remark}\label{rem:1}
A usual averaging procedure yields the following corollaries.
For this technique we refer to \cite{Lacey-2005-Hokkaido}.
\begin{corollary}\label{cor:3.6}
Maintain the same conditions
on the parameters $p,p_0,r,r_0,\alpha$.
If a function $a$ belongs to ${\rm BMO}$,
then the following boundedness is true{\rm:}
\[
\left\|I_{\alpha}f\right\|_{{\mathcal H}^{p_0'}_{p'}}
+
\left\|[a,I_{\alpha}]f\right\|_{{\mathcal H}^{p_0'}_{p'}}
\lesssim
\|f\|_{{\mathcal H}^{r_0'}_{r'}},
\]
where $I_{\alpha}$ is given by
\[
I_{\alpha}f(x)=\int_{{\mathbb R}^n}
\frac{f(y)}{|x-y|^{n-\alpha}}\,dy.
\]
\end{corollary}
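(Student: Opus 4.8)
The plan is to derive Corollary \ref{cor:3.6} from Theorems \ref{thm3}, \ref{thm4} and \ref{thm6} by representing the classical $I_\alpha$ and its commutator as an \emph{average}, over a suitable family of shifted and dilated dyadic lattices, of the Haar-type operators $I_{\alpha,{\rm dyadic}}$ for which those theorems are already proved. The point is that a single Haar-based operator cannot reproduce the smooth, non-cancellative kernel $|x-y|^{\alpha-n}$, but averaging over translations restores it. \emph{Step 1 (a covariant family of grids).} For $\omega\in\{0,\tfrac{1}{3},\tfrac{2}{3}\}^n$ and $\beta\in[1,2)$ put ${\mathcal D}^{\omega,\beta}:=\{\beta(Q+\omega):Q\in{\mathcal D}\}$; writing $t=(\omega,\beta)$, I equip the parameter set ${\mathcal T}$ with the probability measure $d\mu(t)$ that is uniform in $\omega$ and normalised $d\beta/\beta$ on $[1,2)$. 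Every construction of the paper — the Haar system, the square function of Proposition \ref{prop:2.1}, the dyadic ${\rm BMO}$ norm, the paraproduct of Proposition \ref{prop:2.2}, and the operators $I_{\alpha,{\rm dyadic}}^{(t)}$, $[a,I_{\alpha,{\rm dyadic}}^{(t)}]$ attached to ${\mathcal D}^{t}$ — transforms covariantly under translations and dilations, and $\|\cdot\|_{{\mathcal M}^{p_0}_{p}}$, $\|\cdot\|_{{\mathcal H}^{p_0'}_{p'}}$ are translation/dilation invariant up to the obvious power of the scaling factor. Hence Theorems \ref{thm3}, \ref{thm4}, \ref{thm6} hold verbatim with ${\mathcal D}$ replaced by ${\mathcal D}^{t}$, with implied constants independent of $t\in{\mathcal T}$. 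Moreover, for real-valued $a\in{\rm BMO}$ one trivially has $\|a\|_{{\rm BMO}^{(t)}_{{\rm dyadic}}}\le\|a\|_{{\rm BMO}}$ for every $t$ (dyadic cubes of ${\mathcal D}^{t}$ are in particular cubes), so Theorems \ref{thm4}, \ref{thm6} on ${\mathcal D}^{t}$ give bounds with constant $\|a\|_{{\rm BMO}}$.

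\emph{Step 2 (averaging).} The key step is the identity
\[
I_\alpha f=c_{n,\alpha}\int_{{\mathcal T}}I_{\alpha,{\rm dyadic}}^{(t)}f\,d\mu(t),
\qquad
[a,I_\alpha]f=c_{n,\alpha}\int_{{\mathcal T}}[a,I_{\alpha,{\rm dyadic}}^{(t)}]f\,d\mu(t),
\]
valid to begin with for $f\in{\rm Span}(\{h^{\varepsilon}_Q\})$. This is the ``usual averaging procedure'': one writes $I_\alpha=c\int_0^\infty s^{\alpha}Q_s\,\frac{ds}{s}$ via a Calder\'on reproducing formula with a fixed mean-zero bump $Q_s$, discretises $s\sim 2^{-j}$, and checks that averaging a scale-$2^{-j}$ mean-zero convolution over the shifts $\omega$ reproduces, up to a dimensional constant, the Haar projection $\sum_{Q\in{\mathcal D}^{t}_j}\langle\cdot,h^{\varepsilon}_Q\rangle h^{\varepsilon}_Q$; the commutator identity then follows by moving $[a,\cdot\,]$ inside the average, which is legitimate since it is linear in the operator. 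Here I would follow \cite{Lacey-2005-Hokkaido}. If only the Morrey-to-Morrey mapping were wanted, a softer route suffices: the pointwise bound $I_\alpha|f|\lesssim\sum_{k=1}^{3^n}I_\alpha^{{\mathcal D}^{k},+}|f|$ by finitely many \emph{positive} dyadic fractional integrals $I_\alpha^{{\mathcal D},+}g:=\sum_{Q\in{\mathcal D}}|Q|^{\frac{\alpha}{n}-1}\langle g,\chi_Q\rangle\chi_Q$, each bounded ${\mathcal M}^{p_0}_{p}\to{\mathcal M}^{r_0}_{r}$ by the argument of Proposition \ref{prop:1}, together with the well-known analogous domination for $[a,I_\alpha]$.

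\emph{Step 3 (conclusion).} As in the proof of Theorem \ref{thm6}, ${\rm Span}(\{h^{\varepsilon}_Q\})$ is dense in ${\mathcal H}^{r_0'}_{r'}$, so it suffices to bound $\|I_\alpha f\|_{{\mathcal H}^{p_0'}_{p'}}$ and $\|[a,I_\alpha]f\|_{{\mathcal H}^{p_0'}_{p'}}$ by $\|f\|_{{\mathcal H}^{r_0'}_{r'}}$ on that class. For such $f$ and each $t$, Theorem \ref{thm6} applied on ${\mathcal D}^{t}$ gives $I_{\alpha,{\rm dyadic}}^{(t)}f,[a,I_{\alpha,{\rm dyadic}}^{(t)}]f\in{\mathcal H}^{p_0'}_{p'}$ with norm $\lesssim\|f\|_{{\mathcal H}^{r_0'}_{r'}}$ uniformly in $t$; feeding these into the representation of Step 2 and applying Minkowski's inequality for the ${\mathcal H}^{p_0'}_{p'}$-valued integral against the probability measure $\mu$ shows $I_\alpha f,[a,I_\alpha]f\in{\mathcal H}^{p_0'}_{p'}$ and yields the stated estimate. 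Once this a priori membership is secured, the bound can alternatively be obtained by the (now correct) duality of Theorem \ref{thm6}: using $({\mathcal H}^{p_0'}_{p'})^*={\mathcal M}^{p_0}_{p}$, the self-adjointness of $I_\alpha$ and the anti-self-adjointness of $[a,I_\alpha]$, one has $\|I_\alpha f\|_{{\mathcal H}^{p_0'}_{p'}}=\sup_{\|h\|_{{\mathcal M}^{p_0}_{p}}=1}\left|\int_{{\mathbb R}^n}f(x)I_\alpha h(x)\,dx\right|\le\|f\|_{{\mathcal H}^{r_0'}_{r'}}\sup_h\|I_\alpha h\|_{{\mathcal M}^{r_0}_{r}}$, the last supremum being finite by Step 2 on the Morrey side; the commutator is identical, invoking Theorem \ref{thm5}.

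\emph{Main obstacle.} The real work is concentrated in Step 2: pinning down the averaging identity — matching the Haar cancellation to the smooth kernel of $I_\alpha$ and verifying that every constant is genuinely uniform over ${\mathcal T}$, including the ${\rm BMO}$ comparison and the $L^p\to L^r$-type ingredients packaged in Proposition \ref{prop:1} — and, alongside it, securing the a priori membership $I_\alpha f\in{\mathcal H}^{p_0'}_{p'}$, which the ``wrong proof'' of Theorem \ref{thm6} shows cannot be skipped and which here is available only because each dyadic model operator already maps into ${\mathcal H}^{p_0'}_{p'}$ and the average converges there.
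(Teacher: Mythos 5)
Your overall strategy -- writing $I_\alpha$ and $[a,I_\alpha]$ as averages of the dyadic model operators over a family of shifted/dilated grids, invoking translation/dilation covariance of Theorems \ref{thm3}, \ref{thm4}, \ref{thm6} with constants uniform in the grid, and concluding by Minkowski's inequality for the ${\mathcal H}^{p_0'}_{p'}$-valued integral -- is exactly what the paper means by ``a usual averaging procedure'' with reference to \cite{Lacey-2005-Hokkaido}; the paper gives no more detail than that, so you are filling a real gap, and your Step~3 (including the observation that membership in ${\mathcal H}^{p_0'}_{p'}$ is inherited from the uniform bounds on the model operators via Minkowski, which is the crux that the ``wrong proof'' of Theorem~\ref{thm6} misses) is sound once the averaging identity is in place.

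The gap is in Step~1: the parameter set $\{0,\frac13,\frac23\}^n\times[1,2)$ does \emph{not} support the exact averaging identity of Step~2. The finite set of shifts $\{0,\frac13,\frac23\}^n$ is the ``one-third trick'' used for pointwise \emph{domination} by finitely many dyadic (and typically positive) operators -- which is indeed what your ``softer route'' uses and which suffices for a priori Morrey--Morrey bounds -- but it cannot produce the identity $I_\alpha f=c\int_{\mathcal T}I_{\alpha,{\rm dyadic}}^{(t)}f\,d\mu(t)$. The reason is structural: with only finitely many shift values (even after integrating out the dilation parameter $\beta$ with $d\beta/\beta$), the kernel of the averaged operator $\int_{\mathcal T}I_{\alpha,{\rm dyadic}}^{(t)}\,d\mu(t)$ remains a function of $x$ and $y$ separately rather than of $x-y$; one can check directly (e.g.\ for $n=1$, $\varepsilon$ fixed, comparing the pairs $(x,y)=(0,\tfrac12)$ and $(1,\tfrac32)$) that the sets of $\beta\in[1,2)$ for which $x$ and $y$ share a unit-scale cube of $\mathcal{D}^{\omega,\beta}$ are different, so translation invariance fails and the averaged kernel cannot equal $c|x-y|^{\alpha-n}$. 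To make Step~2 correct you must replace the discrete shifts by a \emph{continuous} family of translations, as in \cite{Lacey-2005-Hokkaido} (random dyadic grids indexed either by $r\in\mathbb{R}^n$ together with the dilation, or by scale-by-scale Bernoulli shifts), and then the reproducing computation actually has to be carried out rather than gestured at. This matters especially because, as you correctly note, the ``softer route'' does not by itself secure $I_\alpha f\in{\mathcal H}^{p_0'}_{p'}$; for the predual conclusion you genuinely need the exact representation, so the continuous-shift parametrization is not optional. Once Step~1 is repaired, the rest of your argument goes through.
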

As for $I_\alpha$ we made an alternative approach
in \cite[Theorem 3.1]{Sawano-Sugano-Tanaka-2009-BVP}
and \cite[Theorem 3.1]{Sawano-Sugano-Tanaka-2010-TAMS}.
\end{remark}

\subsection{Proof of Theorem \ref{thm7}}
\label{subsection:thm7}

\lq \lq If part" is a direct consequence of Theorem \ref{thm5}.
Let us prove the converse.
To this end,
we need the following fundamental lemma.
\begin{lemma}\label{lem3.3}
Let $X$ and $Y$ be Banach spaces.
Suppose that we are given a compact linear operator $T:X \to Y$.
If $\{f_j\}_{j \in \N}$ is a sequence in $X^*$
that is weak-* convergent to $0$.
Then $\{T^*f_j\}_{j \in \N}$ is norm-convergent to $0$.
\end{lemma}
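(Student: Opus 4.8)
This is essentially the sequential form of Schauder's theorem, and the plan is to run the standard argument that the adjoint of a compact operator sends weak-$*$ null sequences to norm-null sequences. First I would record that a weak-$*$ convergent sequence in the dual of a Banach space is automatically norm bounded: applying the uniform boundedness principle to the evaluation functionals $y \mapsto \langle f_j, y\rangle$, one obtains $M := \sup_j \|f_j\| < \infty$. (It is here that completeness of the relevant predual, part of the hypothesis, enters.) This uniform bound is what will interact with the compactness of $T$.

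Next I would unravel the quantity to be controlled,
\[
\|T^* f_j\| = \sup_{x \in B_X} \bigl|\langle f_j, Tx\rangle\bigr|,
\]
where $B_X$ denotes the closed unit ball of $X$, and argue by contradiction. If $\|T^* f_j\|$ did not tend to $0$, there would be $\delta > 0$, a subsequence (which I relabel $\{f_j\}$), and vectors $x_j \in B_X$ with $|\langle f_j, Tx_j\rangle| \ge \delta$ for all $j$. Since $T$ is compact, $\{Tx_j\}$ lies in a relatively compact subset of $Y$, so after passing to a further subsequence $Tx_{j_k} \to y$ in $Y$ for some $y \in Y$. The splitting
\[
\bigl|\langle f_{j_k}, Tx_{j_k}\rangle\bigr|
\le M\,\|Tx_{j_k} - y\| + \bigl|\langle f_{j_k}, y\rangle\bigr|
\]
then forces the left-hand side to $0$: the first term because $Tx_{j_k} \to y$ against a uniformly bounded sequence, the second because $\langle f_j, y\rangle \to 0$ for the now-fixed $y$. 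This contradicts $|\langle f_{j_k}, Tx_{j_k}\rangle| \ge \delta$ and proves the lemma. An equivalent, contradiction-free route is to note that $K := \overline{T(B_X)}$ is a compact metric space on which the restrictions of the $f_j$ form a family that is equi-Lipschitz with constant $M$, hence equicontinuous, and converges pointwise to $0$; equicontinuity plus compactness upgrades this to uniform convergence on $K$, that is, $\|T^* f_j\| = \sup_{y \in K} |\langle f_j, y\rangle| \to 0$.

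I do not expect a real obstacle: the argument is short and classical. The only items needing a little care are the two nested subsequence extractions and making the role of the uniform bound $M$ explicit, since it is $M$---available only because weak-$*$ convergent sequences in the dual of a Banach space are bounded---that controls the cross term $\|Tx_{j_k} - y\|$.
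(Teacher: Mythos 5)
Your proof is correct. The paper itself states Lemma~\ref{lem3.3} as a ``fundamental lemma'' and gives no proof, so there is no paper argument to compare against; what you have written is the standard sequential form of Schauder's theorem, and both routes you sketch---the subsequence/contradiction argument combining relative compactness of $T(B_X)$ with the uniform bound $M=\sup_j\|f_j\|$ from Banach--Steinhaus, and the alternative via equicontinuity and uniform convergence on the compact set $\overline{T(B_X)}$---are valid. One trivial point of hygiene: after passing to a subsequence with $\|T^*f_j\|\ge\delta$, the supremum defining $\|T^*f_j\|$ need not be attained, so one should choose $x_j\in B_X$ with, say, $|\langle f_j,Tx_j\rangle|\ge\delta/2$; this does not affect the conclusion.
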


Now let us prove $a \in {\rm VMO}_{{\rm dyadic}}$ 
assuming that $[a,I_{\alpha,{\rm dyadic}}]$ is compact.

Let us set
\begin{equation}
[a,I_{\alpha,{\rm dyadic}}]_{\ge L}
:=
\lim_{M \to \infty}
\sum_{\varepsilon \in E}
\sum_{j=L}^M
\sum_{Q \in {\mathcal D}_j}
\langle a \cdot I_{\alpha,{\rm dyadic}}f-I_{\alpha,{\rm dyadic}}[a \cdot f],
h^{\varepsilon}_Q \rangle h^{\varepsilon}_Q.
\end{equation}
Then we have
\begin{equation}
\|\,[a,I_{\alpha,{\rm dyadic}}]_{\ge L}\,
\|_{{\mathcal M}^p_q \to {\mathcal M}^s_t}
\lesssim
\sum_{\varepsilon \in E}
\sup_{U \in {\mathcal D}}
m_U\left(
\left|
\sum_{j=L}^{\infty}
\sum_{Q \in {\mathcal D}_j, \, Q \subset U}
\langle a,h^{\varepsilon}_Q \rangle
h^{\varepsilon}_Q\right|\right)
\end{equation}
by virtue of Theorem \ref{thm4}.
The triangle inequality yields
\begin{align*}
\sup_{U \in {\mathcal D}}
m_U\left(
\left|
\sum_{j=L}^{\infty}
\sum_{Q \in {\mathcal D}_j, \, Q \subset U}
\langle a,h^{\varepsilon}_Q \rangle
h^{\varepsilon}_Q\right|\right)
&=
\sup_{U \in \bigcup_{\nu=L}^\infty {\mathcal D}_\nu}
m_U\left(
\left|
\sum_{j=L}^{\infty}
\sum_{Q \in {\mathcal D}_j, \, Q \subset U}
\langle a,h^{\varepsilon}_Q \rangle
h^{\varepsilon}_Q\right|\right).
\end{align*}
According to (\ref{eq:5.12}) and Lemma \ref{lem3.3},
we have
\begin{align*}
\lim_{L \to \infty}
\sup_{U \in {\mathcal D}}
m_U\left(
\left|
\sum_{j=L}^{\infty}
\sum_{Q \in {\mathcal D}_j, \, Q \subset U}
\langle a,h^{\varepsilon}_Q \rangle
h^{\varepsilon}_Q\right|\right)
&=0.
\end{align*}
Thus, assuming that $[a,I_{\alpha,{\rm dyadic}}]$ is compact,
we have
\begin{equation}
\label{eq:39}
\lim_{L \to \infty}
[a,I_{\alpha,{\rm dyadic}}]_{\ge L}=0
\end{equation}
in the operator topology.
Also, we set
\begin{equation}
[a,I_{\alpha,{\rm dyadic}}]_{\le -L}
:=
\lim_{M \to \infty}
\sum_{\varepsilon \in E}
\sum_{j=-M}^{-L}
\sum_{Q \in {\mathcal D}_j}
\langle a \cdot I_{\alpha,{\rm dyadic}}f-I_{\alpha,{\rm dyadic}}[a \cdot f],
h^{\varepsilon}_Q \rangle h^{\varepsilon}_Q.
\end{equation}
Then we have
\begin{equation}
\|\,[a,I_{\alpha,{\rm dyadic}}]_{\le -L}\,
\|_{{\mathcal M}^p_q \to {\mathcal M}^s_t}
\lesssim
\sum_{\varepsilon \in E}
\sup_{U \in {\mathcal D}}
m_U\left(
\left|
\sum_{j=-\infty}^{-L}
\sum_{Q \in {\mathcal D}_j, \, Q \subset U}
\langle a,h^{\varepsilon}_Q \rangle
h^{\varepsilon}_Q\right|\right)
\end{equation}
by virtue of Theorem \ref{thm4}.
Note that
\begin{align*}
\lefteqn{
\sup_{U \in {\mathcal D}}
m_U\left(
\left|
\sum_{j=-\infty}^{-L}
\sum_{Q \in {\mathcal D}_j, \, Q \subset U}
\langle a,h^{\varepsilon}_Q \rangle
h^{\varepsilon}_Q\right|\right)
}\\
&=
\sup\left\{
m_U\left(
\left|
\sum_{j=-\infty}^{-L}
\sum_{Q \in {\mathcal D}_j, \, Q \subset U}
\langle a,h^{\varepsilon}_Q \rangle
h^{\varepsilon}_Q\right|\right)
\,:\,U \in \bigcup_{\nu=-\infty}^{-L}{\mathcal D}_\nu\right\}
\end{align*}
in order that $Q \subset U$ actually happens.
Thus, assuming that $[a,I_{\alpha,{\rm dyadic}}]$ is compact,
we have
\begin{equation}
\label{eq:42}
\lim_{L \to \infty}
[a,I_{\alpha,{\rm dyadic}}]_{\le -L}=0
\end{equation}
again in the opertor topology.
{}From (\ref{eq:39}) and (\ref{eq:42})
we have
\begin{equation}
\label{eq:43}
\lim_{L \to \infty}
\|a-a_{(L)}\|_{{\rm BMO}_{{\rm dyadic}}}
=0,
\end{equation}
if we write
\[
a_{(L)}:=\sum_{\varepsilon \in E}
\sum_{j=-L}^L
\sum_{Q \in {\mathcal D}_j}
\langle a,h^\varepsilon_Q \rangle h^\varepsilon_Q.
\]
It is not so hard to see that
\begin{equation}
\lim_{R \to \infty}\sup
\{
|\langle a,h^\varepsilon_{Q_{\nu m}} \rangle|
\,:\,m \in \Z^n, \, |m| \ge R\}=0
\end{equation}
for all $\nu \in \Z$ if $[a,I_{\alpha,{\rm dyadic}}]$ is compact.
Thus, it follows that
\begin{equation}
\label{eq:45}
\lim_{R \to \infty}
\left\|
\sum_{m \in {\mathbb Z}^n, \, |m|>R}
\langle a,h^\varepsilon_{Q_{jm}} \rangle h^\varepsilon_{Q_{jm}}
\right\|_{{\rm BMO}_{{\rm dyadic}}}=0
\end{equation}
for all $j \in {\mathbb Z}$.

{}From (\ref{eq:45})
we learn that $a_{(L)} \in {\rm VMO}_{{\rm dyadic}}$,
which in turn yields $a \in {\rm VMO}_{{\rm dyadic}}$ 
by virtue of (\ref{eq:43}).

\section{Acknowledgement}

The author is supported 
by Grant-in-Aid for Young Scientists (B) No. 24740085
from the 
Japan Society for the Promotion of Science. 
The author is also indebted to Zenis Co. Ltd
for the check of the presentation in English
in Section 1.

\begin{flushleft}

Yoshihiro Sawano\\
Department of Mathematics and Information Sciences, \\
Tokyo Metropolitan University, \\
1-1 Minami-Ohsawa, Hachioji, Tokyo, 192-0397, Japan\\
E-mail : ysawano@tmu.ac.jp \\

\end{flushleft}

\begin{thebibliography}{999}
\bibitem{Chanillo-1982-Indiana}
Chanillo S., 
A note on commutators. Indiana Univ. Math. J. (1) 
{\bf 31} (1982), 7--16.

\bibitem{Coifman-Meyer-1975-TAMS}
Coifman R.R. and Meyer Y., 
On commutators of singular integrals and bilinear singular
integrals. Trans. Amer. Math. Soc. {\bf 212} (1975), 315--331.

\bibitem{Coifman-Meyer-1978-Grenoble}
Coifman R.R. and Meyer Y., 
Commutateurs d'int\'{e}grales singuli\`{e}res et op\'{e}rateurs 
multilin\'{e}aires, 
Ann. Inst. Fourier (Grenoble), (3) {\bf 28} (1978), 177--202.

\bibitem{HW}
E.~Hern\'{a}ndez, G.~Weiss, 
A first course on wavelets. With a foreword by Yves Meyer, 
Studies in Advanced Mathematics. CRC Press, Boca Raton, FL, 1996. xx+489 pp. 

\bibitem{Lacey-2005-Hokkaido}
M.~Lacey,
Commutators with Reisz potentials in one and several parameters, 
Hokkaido Math. J. {\bf 36} (2007), no. 1, 175--191.

\bibitem{Izuki-Sawano-2009-JMAA}
M.~Izuki and Y.~Sawano,
The Haar wavelet characterization of weighted Herz spaces 
and greediness of the Haar wavelet basis, 
J.~Math.~Anal.~Appl. {\bf 362} (2010), no. 1, 140--155.

\bibitem{Sawano-Shirai-2008-GMJ}
Y.~Sawano and S.~Shirai, 
Compact commutators on Morrey spaces with non-doubling measures, 
Georgian Math. J. {\bf 15} (2008), no. 2, 353--376.

\bibitem{Sawano-Sugano-Tanaka-2009-Proc}
Y.~Sawano, S.~Sugano and H.~Tanaka,
Identification of the image of Morrey spaces
by the fractional integral operators,
Proceedings of A. Razmadze Mathematical Institute, \textbf{149}, 
(2009), 87--93.

\bibitem{Sawano-Sugano-Tanaka-2009-BVP} 
Y.~Sawano, S.~Sugano and H.~Tanaka,
A note on generalized fractional integral operators 
on generalized Morrey spaces,
Bound.~Value Probl.~2009, Art. ID 835865, 18 pp.

\bibitem{Sawano-Sugano-Tanaka-2010-TAMS}
Y.~Sawano, S.~Sugano and H.~Tanaka,
Generalized fractional integral operators and fractional maximal 
operators in the framework of Morrey spaces,
to appear in Transaction of American Mathematical Society.

\bibitem{SaTa2}
 Y.~Sawano and H.~Tanaka, 
Sharp maximal inequalities and 
commutators on Morrey spaces
with non-doubling measures, 
Taiwanese J. Math. {\bf 11} (2007), no. 4, 1091--1112.

\bibitem{St} E.~M.~Stein,
Harmonic Analysis{\rm :}
Real-Variable Methods, Orthogonality,
and Oscillatory Integrals,
Princeton Univ. Press, (1993).

\bibitem{Zo}C.~Zorko, 
Morrey space, 
Proc. Amer. Math. Soc. {\bf 98} (1986), No. 4, 586--592.
\end{thebibliography}
\end{document}